\def\Xint#1{\mathchoice
 {\XXint\displaystyle\textstyle{#1}}%
 {\XXint\textstyle\scriptstyle{#1}}%
 {\XXint\scriptstyle\scriptscriptstyle{#1}}%
 {\XXint\scriptscriptstyle\scriptscriptstyle{#1}}%
 \!\int}
 \def\XXint#1#2#3{{\setbox0=\hbox{$#1{#2#3}{\int}$}
 \vcenter{\hbox{$#2#3$}}\kern-.5\wd0}}
 \def\dashint{\Xint-}
\newcommand{\RR}{\mathbb R}
\newcommand{\Rd}{{\mathbb R}^d}
\newcommand{\NN}{\mathbb N}
\newcommand{\Zd}{{\mathbb Z^d}}
\newcommand{\Td}{{\mathbb T^d}}
\newcommand{\N}{\mathbb{N}}
\newcommand{\T}{\mathbb T}
\newcommand{\Z}{\mathbb Z}
\newcommand{\pat}{\partial_t}
\newcommand{\divv}{{\rm div \,}}
\newcommand{\A}{\mathcal{A} }
\newcommand{\D}{Dv }
\newcommand{\Du}{Du }
\newcommand{\cN}{{\mathcal{N}} }
\newcommand{\cS}{{\mathcal{S}} }
\newcommand{\Do}{Du_1 }
\newcommand{\vp}{u_{p}}
\newcommand{\vc}{u_{c}}
\newcommand{\divn}{\mathrm{div}^{-1}}
\newcommand{\divnd}{\mathrm{div}^{-2}}
\newcommand{\idivn}{{\mathcal{R}}}
\newcommand{\Id}{{{\rm Id}}}
\newcommand{\supp}{{{\rm supp}\,}}
\newcommand{\curl}{{\rm curl \,}}
\newcommand{\tr}{{\rm tr \,}}
\newcommand{\vertiii}[1]{{\left\vert\kern-0.25ex\left\vert\kern-0.25ex\left\vert #1 
    \right\vert\kern-0.25ex\right\vert\kern-0.25ex\right\vert}}
\newcommand{\cR}{\mathring {R}}
\newcommand{\Po}{P_{\neq 0}}
\newcounter{comentcount}
\newcounter{teocount}
\newtheorem{lem}{Lemma}
\newtheorem{prop}{Proposition}
\newtheorem{defi}{Definition}
\newtheorem{remark}{Remark}
\newtheorem{theoremmain}{Theorem}
\title[]{Non Uniqueness of power-law flows}
\author[J. Burczak]{Jan Burczak}
\address{J. Burczak: Institut f\"ur Mathematik, Universit\"at Leipzig, D-04103 Leipzig, Germany}
\author[S. Modena]{Stefano Modena}
\address{S. Modena: Technische Universit\"at Darmstadt, Fachbereich Mathematik, D-64285 Darmstadt, Germany}
\author[L. Sz\'ekelyhidi]{L\'aszl\'o Sz\'ekelyhidi}
\address{L. Sz\'ekelyhidi: Institut f\"ur Mathematik, Universit\"at Leipzig, D-04103 Leipzig, Germany}
\begin{document}

\begin{abstract}
We apply the technique of convex integration to obtain non-uniqueness and existence results for power-law fluids, in dimension $d\ge 3$. For the power index $q$ below the compactness threshold, i.e.\ $q \in (1, \frac{2d}{d+2})$, we show ill-posedness of Leray-Hopf solutions. For a wider class of indices  $q \in (1, \frac{3d+2}{d+2})$ we show ill-posedness of distributional (non-Leray-Hopf) solutions, extending the seminal paper of Buckmaster \& Vicol \cite{BV19}. In this wider class we also construct non-unique solutions for every datum in $L^2$.
\end{abstract}

\thanks{The authors are thankful to Tobias Barker, Giacomo Canevari, Eduard Feireisl, Francisco Gancedo, Martina Hofmanova, Josef M\'alek, and Angkana R\"uland for their interest in this work. The authors thank Helen Wilson for sharing her applied expertise. \\
J. B. was supported by the National Science Centre, Poland (NCN) grant SONATA 2016/21/D/ST1/03085. \\
S.M. and L.~Sz.~were partially supported by the European Research Council (ERC) under the European Union's Horizon 2020 research and innovation programme (grant agreement No.724298-DIFFINCL). \\
This work was initiated at the Hausdorff Research Institute (HIM) in Bonn during the Trimester Programme Evolution of Interfaces. The authors gratefully acknowledge the warm hospitality of HIM during this time.}
\maketitle 


\section{Introduction}
This paper studies non-uniqueness and existence of  solutions of the following model of non-Newtonian flows in $d$ dimensions, $d \geq 3$
\begin{equation}\label{eq:pnse}
\begin{aligned}
\pat v + \divv (v\otimes v)  - \divv \A(\D)+\nabla \tilde\pi &= 0, \\
\divv v &= 0, &  \\
v_{t =0} &= v_0,
\end{aligned}
\end{equation}
where the velocity field $v$ and and the pressure $\tilde \pi$ are the unknowns, $\D = \frac12 (\nabla v + \nabla^T v)$,
and the non-Newtonian tensor $\A$ is given by the following power law
\begin{equation}\label{eq:orl}
\A (Q) = (\nu_0+\nu_1 |Q|)^{q-2} Q, 
\end{equation}
for some $\nu_0, \nu_1 \geq 0$ and $q \in (1, \infty)$. A natural energy associated with the system \eqref{eq:pnse} is
\begin{equation}
\label{eq:energy}
e(t) = \int  |v(t)|^2 + 2 \int_0^t \int  \A \big(D v (s)\big) D v (s) ds.
\end{equation}
Let us consider a distributional solution $v$ to \eqref{eq:pnse},  \eqref{eq:orl} with spatial mean zero, on a $d$-dimensional flat torus. The formula \eqref{eq:energy} together with $\A(Q) Q \sim |Q|^q$ explains why $v \in L^\infty (L^2) \cap L^q (W^{1,q})$ is called an \emph{energy solution}. If such solution satisfies additionally the energy inequality $e(t) \leq e(0)$ ($t$-a.e.), then it is called  a \emph{Leray-Hopf solution}.
 
For the problem \eqref{eq:pnse} we show two non-uniqueness and one existence result. In short:
\begin{enumerate}[(A)]
\item \label{bcom} In the regime $1< q<2d/(d+2)$: There are non-unique Leray-Hopf solutions.
\item \label{bsc} In the regime $1< q < (3d+2)/(d+2)$: There are non-unique distributional solutions dissipating the kinetic part of the energy.
\item In the regime $1< q < (3d+2)/(d+2)$: For any initial datum $a \in L^2$ there are infinitely many distributional solutions of the  Cauchy problem.
\end{enumerate}

Our results are sharp concerning the power-law index $q$. The regime $1< q < (3d+2)/(d+2)$ includes the case of the incompressible Navier-Stokes equation in $d\ge3$.
The precise formulations can be found in Section \ref{ssec:oc}.

\subsection{Background of power-law flows}

Model \eqref{eq:pnse} with a slightly different choice of $\mathcal{A}(Q)$, namely
\begin{equation}
\label{eq:orl-lady}
\mathcal{A}(Q) = (\nu_0+\nu_1 |Q|^{q-2}) Q,
\end{equation}
with $q\ge 2$ was introduced to wide mathematical community by Ladyzhenskaya at her 1966 Moscow ICM speech; her formula (30) in \cite{LadICM66} corresponds exactly to \eqref{eq:pnse}, \eqref{eq:orl-lady}. With $q=2$, both models \eqref{eq:pnse},  \eqref{eq:orl} and \eqref{eq:pnse}, \eqref{eq:orl-lady} reduce to the (incompressible) Navier-Stokes equations.

The Ladyzhenskaya's choice: \eqref{eq:orl-lady} with $q \ge 2$ and our \eqref{eq:orl} with $q \ge 2$ are analytically equivalent. 
In particular, the non-Newtonian tensor $\mathcal{A}(Q)$ is  in both cases nonsingular at $Q=0$, and distributional solutions are well-defined for velocity fields in the class 
\begin{equation}
\label{eq:class-v}
v \in L^2_{loc}, \quad Dv \in L^q_{loc}.
\end{equation}
The difference between \eqref{eq:orl-lady} and \eqref{eq:orl}  plays a role for $q < 2$. Firstly, $\nu_0+\nu_1 |Q|^{q-2}$ of \eqref{eq:orl-lady} is singular at $|Q|=0$, while our $(\nu_0+\nu_1 |Q|)^{q-2}$ for $\nu_0>0$ is not. More importantly, in \eqref{eq:orl-lady} a \emph{linear} dissipation is present. Thus, distributional solutions to \eqref{eq:pnse}-\eqref{eq:orl-lady} make sense provided $Dv \in L^2_{loc}$. 
So the choice \eqref{eq:orl} isolates the `pure $L^q$-dissipation' behaviour, while \eqref{eq:orl-lady} involves `$L^2$-$L^q$ dissipation'.

Ladyzhenskaya's rationale for analysing \eqref{eq:pnse} was twofold: on the one hand, relaxation $q \ge 2$ helps to avoid the traps of the Navier-Stokes case $q =2$. At the same time, the choice of power-laws for the tensor $\A$ is both consistent with first principles of continuum mechanics and widely used in applications. Let us elaborate on each of these points.

The model \eqref{eq:pnse} with power-law for $\A$ of type \eqref{eq:orl} or  \eqref{eq:orl-lady} agrees with the constitutive relations for incompressible, viscous fluids. Recall that in deriving the Navier-Stokes equation one restricts the admissible relations between the Cauchy stress tensor $\mathcal{T}$ and $D$ (dictated by the material frame indifference) 
by the Ansatz of linear dependence between $\mathcal{T}$ and $D$ (i.e.\ by the Stokes law), cf.\ \cite{GFAbook}. The power law model relaxes this Ansatz, but remains well within the frame indifference principle. 

Of course studying an arbitrary model that is merely consistent with the first principles may be applicationally void. This is not the case of  \eqref{eq:pnse} however. The power-laws have been proposed independently in 1920's by Norton \cite{Nor29} in metallurgy and by de Waele \cite{deW23} and Ostwald \cite{Ost29} in polymer chemistry. The related timeline can be found in section 1 of \cite{OssRud14}. For details, the interested reader may consult also the monographs \cite{Sch78book, MNRR, Sar16book, BirArmHas87} and the recent survey \cite{BMR19} with its references. Just in order to fix the hydrodynamical intuition, let us observe that $q<2$ in \eqref{eq:pnse} models the case when the fluid is more viscous (roughly, `solid-like') for small shears (`external forces') and less viscous (`liquid-like') for large shears e.g.\ ice pack, ketchup, emulsion paints, hair gel, whereas $q>2$ means reverse behavior e.g.\ cornstarch-water solution, silicone-based solutions.

Let us note that, despite the mathematical interest in $q \ge 2$ in context of gaining regularity compared to Navier-Stokes equations, the `shear-thinning' case $q \le 2$ appears to be more meaningful for applications, where models of type \eqref{eq:pnse} with $\nu_0>0$ appear as Bird-Carreau-Yasuda models (or called by a subset of those names). In particular, experimental fits for the threshold value $6/5$ and above can be found on p.\ 174 of \cite{BirArmHas87}. Furthermore, even parameter choices well-into our Leray-Hopf non-uniqueness regime are suggested, cf.\ p.18 of \cite{Tan00}. (In both \cite{BirArmHas87} and \cite{Tan00} $n=q-1$, $d=3$. A discrepancy between appearing there $a$ and our model is insignificant for our results.)

From the applicational perspective, our result may be seen as invalidating certain choices of parameters and data.

\subsection{Essential analytical results for power-law fluids} \label{ssec:old-results}
Consider the system \eqref{eq:pnse}, \eqref{eq:orl}.
For $q > \frac{2d}{d+2}$ the space $W^{1,q}$ of system's energy embeds compactly into $L^2_{loc}$ of the convective term $\divv (v\otimes v)$. Hence one may expect an existence proof of Leray-Hopf solutions via compactness methods. Indeed, a relevant statement can be found in \cite{DRW10}, which is itself the final step in a chain of attempts of many authors, including Frehse and Ne\v{c}as with collaborators \cite{MNR93,FMS00} to improve the lower bound on $q$. To be precise, the energy inequality $e(t) \leq e(0)$ is not stated explicitly in \cite{DRW10}; however it can be proven e.g.\ along the lines of proof of Theorem 3.3 of \cite{BMR19}.

Observe that \eqref{eq:pnse} with $\nu_0 =0$ is invariant under the scaling 
\begin{equation}\label{eq:sca}
v_\lambda := \lambda^\alpha v ( \lambda x, \lambda^{\alpha +1} t) \quad \text{ with } \; \alpha = \frac{q-1}{3-q}.
\end{equation}
Consequently, the energy of $v_\lambda$ vanishes on small scales iff $q < \frac{3d+2}{d+2}$. This suggests that the case $q \ge \frac{3d+2}{d+2}$ of \eqref{eq:pnse} is a perturbation of the problem \eqref{eq:pnse} without the convective term. Indeed, for $q \ge \frac{3d+2}{d+2}$ uniqueness in the energy class (at least for tame initial data) holds, cf. \cite{MNRR}, section 5.4.1; see also \cite{BKP19}.

What is known about existence and uniqueness of solutions to \eqref{eq:pnse} can be thus sketched as follows 
\begin{figure}[h!]
\includegraphics[width=0.4\linewidth]{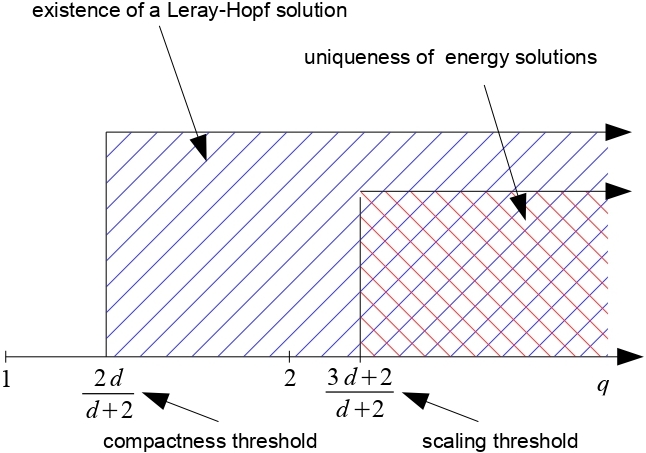}
  \caption{Known results}
  \label{fig:1}
\end{figure}
%
\subsection{Our contribution}\label{ssec:oc}
The short version of our results presented at the very beginning of the paper, recast graphically to facilitate comparison with Figure \ref{fig:1}, reads
\begin{figure}[h!]
\includegraphics[width=0.52\linewidth]{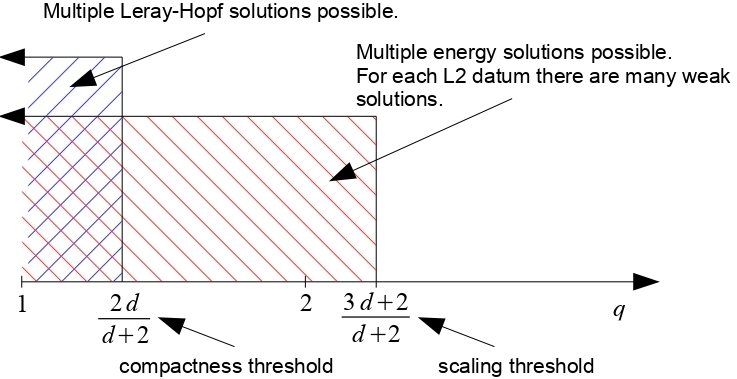}
  \caption{Our results}
  \label{fig:2}
\end{figure}

Observe that Figure \ref{fig:2} complements Figure \ref{fig:1} sharply with respect to $q$.

Let us now present the detailed statements of our results. We always consider system \eqref{eq:pnse} on the $d$-dimensional flat torus $\T^d$, with $v$ having its spatial mean zero.
\subsubsection{Non-uniqueness in the Leray-Hopf class} Our first theorem and its corollary show that below the compactness exponent, i.e.\ for $q<\frac{2d}{d+2}$, multiple Leray-Hopf solutions may emanate from the same $L^2$ initial data. In fact, we produce solutions $v \in C(L^2) \cap C (W^{1,q})$ with quite arbitrary pre-determined profile $e$ of the (total) energy \eqref{eq:energy}. 
\begin{theoremmain}\label{thm:main}
Consider \eqref{eq:pnse}, \eqref{eq:orl} on the space-time domain $\Td  \times (0,1)$. Let $q< \frac{2d}{d+2}$. Fix an arbitrary $e \in C^\infty([0,1]; [1/2,1])$. There exists $v \in C([0,1]; L^2(\T^d)) \cap C ([0,1]; W^{1,q}(\T^d))$ such that
\begin{enumerate}[1)]
\item $v$ solves \eqref{eq:pnse} distributionally, i.e.
\[
\int_0^1 \int_\Td -v \cdot \pat \varphi - v\otimes v  \nabla \varphi   + \A(\D) \nabla \varphi = 0, \qquad  \forall_{t \in [0,1]} \quad \int_\Td v (t) \cdot \nabla \psi = 0
\]
for any divergence-free $\varphi \in C^1 (\Td  \times [0,1])$ vanishing at $t=0$ and $t=1$, and any  $\psi \in C^1 (\Td)$;
\item the total energy equals $e$, i.e.\
\begin{equation}\label{eq:epr}
\int_\Td  |v|^2 (t)  + 2 \int_0^t \int_\Td  \A (D v) D v = e(t).
\end{equation}
\end{enumerate}
Moreover, fix $0 \leq T_1 < T \leq 1$ and  two energy profiles $e_1, e_2$ as above, such that $e_1 (t)= e_2 (t) $ for $t \in [0, T]$. There exists $v_1, v_2$ satisfying 1), 2) and such that $v_1  (t)= v_2  (t)$ for $ t \in [0, T_1]$. 
In particular, choosing $T_1 = 0$, $T = 1/2$  and $e_1, e_2$ to be as above, non-increasing and $e_1  \not\equiv e_2$, the corresponding $v_1, v_2$ are two distinct Leray-Hopf solutions with the same initial datum. 
\end{theoremmain}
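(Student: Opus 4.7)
My plan is to prove Theorem A via a convex integration scheme modeled on Buckmaster--Vicol \cite{BV19}, but with the new feature of a genuinely nonlinear dissipation $\divv \A(\D v)$ that must be absorbed into a Reynolds-stress formulation. First I would recast \eqref{eq:pnse} as a subsolution equation
\[
\pat v + \divv(v \otimes v) - \divv \A(\D v) + \nabla \pi = \divv \mathring R,
\]
and construct a sequence $(v_n, \mathring R_n)$ with parameters $\lambda_n \to \infty$ (frequency) and $\delta_n \to 0$ (Reynolds stress amplitude). The inductive step adds $w_{n+1}=v_{n+1}-v_n$, a perturbation composed of intermittent building blocks (of the intermittent-jet type), engineered so that the principal part of $w_{n+1}\otimes w_{n+1}$ cancels $\mathring R_n$ while contributing small $L^2$ mass of the right size. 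Passing $n \to \infty$, the Cauchy property in $C([0,1];L^2)\cap C([0,1];W^{1,q})$ produces a solution of \eqref{eq:pnse} with $\mathring R_\infty = 0$.

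The central and most delicate obstacle is the nonlinear viscous error $\divv\bigl(\A(\D v_{n+1})-\A(\D v_n)\bigr)$, which must be put in divergence form and decay with $n$. Using $|\A(Q)|\lesssim(\nu_0+|Q|)^{q-1}$ together with $\|\nabla w_{n+1}\|_{L^q}$ estimates, I would place this error in $L^{q/(q-1)}$ and apply a Calder\'on--Zygmund type antidivergence operator $\mathcal R$ to obtain a tensor contribution to $\mathring R_{n+1}$. The whole iteration is tunable only if the building blocks satisfy
\[
\|w_{n+1}\|_{L^2}\sim \delta_{n+1}^{1/2},\qquad \|\nabla w_{n+1}\|_{L^q}\to 0,
\]
which on a jet concentrated on cylinders of transversal scale $r_\perp$ and longitudinal scale $r_\|$ with oscillation frequency $\mu$ forces
\[
r_\perp^{(d-1)(\frac1q-\frac12)}\, r_\|^{\frac1q-\frac12}\,\mu\; \ll\; 1 .
\]
Comparing with the necessary transport, oscillation and high--low interaction errors in the Nash--type scheme, the allowed parameter window is nonempty precisely when $q<\frac{2d}{d+2}$; this is exactly where the compactness embedding $W^{1,q}\hookrightarrow L^2$ fails, reflecting that below this threshold the dissipation is too weak to rule out concentrated oscillations. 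I expect the main technical work to lie in the careful $L^p$--bookkeeping of the viscous error, especially because $\A$ is merely H\"older (not Lipschitz) near $Q=0$ when $q<2$, which requires splitting $\{|\D v_n|\lesssim 1\}$ from its complement before applying mean-value type bounds.

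For statement 2) (energy prescription), the amplitude $\delta_{n+1}$ of $w_{n+1}$ is chosen time-locally so that $\int|v_{n+1}|^2(t) + 2\int_0^t\!\int \A(\D v_{n+1})\D v_{n+1}$ matches $e(t)$ up to an error $o(\delta_n)$, exploiting that $\int |w_{n+1}|^2(t)\,dx$ is prescribed by the chosen amplitude function and that the viscous term contributes only $o(1)$ at each step (again by $\|\nabla w_{n+1}\|_{L^q}\to 0$). The identity in the limit is obtained from the uniform energy equality along the iteration.

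For the gluing statement, I would run two coupled iterations $(v_n^{(1)},\mathring R_n^{(1)})$ and $(v_n^{(2)},\mathring R_n^{(2)})$ driven by $e_1$ and $e_2$. The perturbations $w_{n+1}^{(i)}$ are multiplied by a common temporal cutoff $\chi_n$ vanishing on $[0,T_1]$, ensuring inductively $v_n^{(1)}\equiv v_n^{(2)}$ on $[0,T_1]$; one must adjust the starting subsolution so that $\mathring R_0$ is already small on $[0,T_1]$, then progressively shrink $\supp\chi_n$ toward $(T_1,1]$ so that the accumulated gluing error remains summable. After $T_1$, the two schemes decouple but still target different energy profiles. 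Choosing $T_1=0$, $T=1/2$, and $e_1,e_2$ non-increasing with $e_1\not\equiv e_2$ yields $v_1(0)=v_2(0)$, $e_i(t)\le e_i(0)=\int|v_0|^2$ (so both are Leray--Hopf), yet $v_1\neq v_2$ since their energies differ on $(1/2,1]$.
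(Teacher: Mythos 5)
Most of your scheme coincides with the paper's: a Reynolds relaxation of \eqref{eq:pnse}, spatially concentrated travelling building blocks (the paper uses concentrated localized travelling Mikado flows rather than intermittent jets, which generalize more cleanly to all $d\ge 3$ via disjoint periodic tubes), and the identification of $q<\tfrac{2d}{d+2}$ as exactly the condition under which $|\nabla w_{n+1}|_{L^q}\to 0$ is compatible with $|w_{n+1}|_{L^2}\sim\delta_{n+1}^{1/2}$. Two small simplifications you are missing: the viscous error $\A(Dv_{n+1})-\A(Dv_n)$ is \emph{already} a symmetric tensor, so no antidivergence is applied to it — it is simply declared to be a piece of $\mathring R_{n+1}$ and estimated in $L^r$, $r>1$; and no splitting of $\{|Dv_n|\lesssim 1\}$ from its complement is needed, because for $\nu_0=0$, $q\le 2$ the map $Q\mapsto(\nu_1|Q|)^{q-2}Q$ is \emph{globally} $(q-1)$-H\"older (and globally Lipschitz if $\nu_0>0$), so $|R_\A|_{L^1}\lesssim |Dw_{n+1}|_{L^q}^{q-1}\to 0$ directly.

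The genuine gap is in your mechanism for the ``Moreover'' part. Multiplying both perturbations by a common temporal cutoff $\chi_n$ vanishing on $[0,T_1]$ forces $v_n^{(1)}=v_n^{(2)}=v_0$ on $[0,T_1]$ for all $n$, so the Reynolds error is never corrected there: the limits solve \eqref{eq:pnse} on $[0,T_1]$ only if the starting subsolution is already an exact solution there, and they satisfy \eqref{eq:epr} on $[0,T_1]$ only if $v_0$ already has total energy $e(t)\ge 1/2$ there — which is circular (and for $T_1=0$ it outright contradicts $e(0)\ge 1/2$ if one starts from $v_0=0$, since a perturbation killed by the cutoff at $t=0$ pumps no energy into the initial datum). ``Progressively shrinking'' the cutoff does not repair this: either the perturbations eventually act on $[0,T_1]$ (and then nothing forces the two iterates to coincide there) or they never do (and the error survives). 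The paper uses a different mechanism, with no cutoff at all: run both iterations with the \emph{same} parameters ($\bar\lambda_n=\max(\lambda_n^1,\lambda_n^2)$), hence the same Mikado flows, and observe that the amplitude coefficients $a_k$ at step $n+1$ depend only on $e(t)$, on the time-mollified $\mathring R_n$ near time $t$ (mollification width $\epsilon_{n+1}\le 2^{-(n+1)}/(2^7d)$), and on $u_n|_{[0,t]}$ through the cumulative dissipation in $\gamma_0$. Hence $e_1=e_2$ on $[0,T]$ propagates inductively to $u_n^1=u_n^2$ and $\mathring R_n^1=\mathring R_n^2$ on $[0,T-\sum_{i\le n}\epsilon_i]$, and choosing the mollification scales summing to at most $T-T_1$ gives $v_1=v_2$ on $[0,T_1]$ while the energies are freely prescribed there. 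You should replace the cutoff argument by this quasi-locality-in-time of the iteration (or by some equivalent device); as written, the gluing step fails.
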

Analysing the proof of Theorem \ref{thm:main} one realises that choosing an infinite family of non-increasing energy profiles $\{e_\alpha\}_{\alpha \in A}$ with a common $C^1$ bound, one can produce infinitely many distinct Leray-Hopf solutions with the same initial datum.

\subsubsection{Non-uniqueness of distributional solutions} 
If we drop the ambition to control the energy and require only to pre-determine the profile of the kinetic part of the energy  $\int_\Td  |v|^2 (t)$, then we produce non-unique solutions for exponents below the scaling-critical one, i.e.\ for $q<\frac{3d+2}{d+2}$. Moreover, they enjoy the regularity $v \in C(L^2) \cap C (W^{1,r})$ for any $r<\frac{2d}{d+2}$. This is our second result.

\begin{theoremmain}\label{thm:two}
Consider \eqref{eq:pnse}, \eqref{eq:orl} on $\Td  \times (0,1)$. Let $q< \frac{3d+2}{d+2}$. Fix any $e \in C^\infty([0,1]; [1/2,1])$ and $r \in (\max\{1, q-1\}, \frac{2d}{d+2})$. There exists null-mean $v \in C([0,1]; L^2(\T^d)) \cap C ([0,1]; W^{1,r}(\T^d))$ such that
\begin{enumerate}[1)]
\item $v$ solves \eqref{eq:pnse} distributionally, i.e.
\[
\int_0^1 \int_\Td -v \cdot \pat \varphi - v\otimes v  \nabla \varphi   + \A(\D) \nabla \varphi = 0, \qquad  \forall_{t \in [0,1]} \quad \int_\Td v (t) \cdot \nabla \psi = 0
\]
for any divergence-free $\varphi \in C^1 (\Td  \times [0,1])$ vanishing at $t=0$ and $t=1$, and any  $\psi \in C^1 (\Td)$;
\item the kinetic energy equals $e$, i.e.\
\begin{equation}\label{eq:epr2}
\int_\Td  |v|^2 (t) = e(t).
\end{equation}
\end{enumerate}
Moreover, fix $0 \leq T_1 < T \leq 1$ and  two energy profiles $e_1, e_2$ as above, such that $e_1 (t)= e_2 (t) $ for $t \in [0, T]$. There exists $v_1, v_2$ satisfying 1), 2) and such that $v_1  (t)= v_2  (t)$ for $ t \in [0, T_1]$. 
In particular, choosing $T_1 = 0, T =1/2$ and $e_1, e_2$ to be as above, non-increasing and $e_1  \not\equiv e_2$, the corresponding $v_1, v_2$ are two distinct distributional solutions, which belong to $C (L^2) \cap C (W^{1,r})$, dissipate the kinetic energy, and share the same initial datum.
\end{theoremmain}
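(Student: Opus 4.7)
\emph{Plan.} The proof runs a convex-integration scheme in the spirit of Buckmaster--Vicol~\cite{BV19}, and in parallel to our proof of Theorem~\ref{thm:main}, but adapted so that the \emph{dissipative regularity} of the limit is weakened from $W^{1,q}$ to $W^{1,r}$ with $r<\frac{2d}{d+2}$; this weakening frees enough intermittency to cover any $q$ below the scaling exponent $\frac{3d+2}{d+2}$ of \eqref{eq:sca}. Concretely, I would inductively construct smooth subsolutions $(v_n,\cR_n)_{n\in\N}$ of the Navier--Stokes--Reynolds system
\begin{equation*}
\pat v_n + \divv(v_n\otimes v_n) - \divv\A(Dv_n) + \nabla\pi_n = \divv\cR_n,\qquad \divv v_n = 0,
\end{equation*}
with $\|\cR_n\|_{L^1_{t,x}}\to 0$ geometrically, uniform $C_tL^2_x\cap C_tW^{1,r}_x$ bounds on $v_n$, and a kinetic energy $\int_{\T^d}|v_n|^2(t)$ converging to the prescribed $e(t)$. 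The limit $v=\lim v_n$ is then the desired distributional solution.

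At step $n{+}1$, I would set $v_{n+1}=v_n+w_{n+1}$, where $w_{n+1}$ is built from divergence-free, spatially concentrated, high-frequency \emph{intermittent jets} parametrised by an oscillation scale $\lambda_{n+1}$ and a concentration scale $\mu_{n+1}=\lambda_{n+1}^\eta$; schematically,
\begin{equation*}
w_{n+1}(x,t)\sim\sum_k a_k(x,t)\,W_k(\lambda_{n+1},\mu_{n+1};x,t),
\end{equation*}
with $W_k$ chosen so that the low-frequency part of $w_{n+1}\otimes w_{n+1}$ reproduces $-\cR_n$ up to a trace carrying the prescribed energy increment. Thanks to the intermittency, $\|w_{n+1}\|_{L^\infty_tL^2_x}$ is controlled by $\|\cR_n\|_{L^1}^{1/2}$, higher $L^p$-norms grow only by powers of $\mu_{n+1}$, and $\|\nabla w_{n+1}\|_{L^\infty_tL^r_x}$ grows by a modest power of $\lambda_{n+1}\mu_{n+1}$; the exponent $\eta$ is to be tuned to $q$ and $r$.

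The new Reynolds stress $\cR_{n+1}$, obtained by applying a divergence-inverse $\idivn$ to the residual generated by inserting $v_{n+1}$ into \eqref{eq:pnse}, splits into a Nash (oscillation) error, a transport/linear error, and the novel \emph{viscous error} $\idivn\bigl[\divv(\A(Dv_{n+1})-\A(Dv_n))\bigr]$. This is the key new term and the main obstacle. Using the pointwise estimate $|\A(Q)-\A(Q')|\lesssim (\nu_0+|Q|+|Q'|)^{q-2}|Q-Q'|$ (and its analogue for $q\geq 2$), one bounds its $L^1_{t,x}$ norm by an explicit product of powers of $\lambda_{n+1}$ and $\mu_{n+1}$. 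A direct scaling computation --- mirroring exactly the scaling \eqref{eq:sca} which degenerates at $q=\frac{3d+2}{d+2}$ --- then shows that for every $q$ strictly below this threshold and every $r<\frac{2d}{d+2}$ there is a choice of $\eta$ for which the viscous error, together with the Nash and transport errors, can be driven to zero geometrically. This is the algebraic origin of the ranges in the statement.

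Finally, I would design the amplitudes $a_k(x,t)$ so that $\|w_{n+1}(\cdot,t)\|_{L^2}^2$ equals, up to a geometrically small error, $e(t)-\|v_n(\cdot,t)\|_{L^2}^2$ minus the trace correction carried by $\cR_n$; a standard inductive bootstrap then drives $\int_{\T^d}|v_n|^2(t)\to e(t)$ uniformly in~$t$. To obtain two solutions coinciding on $[0,T_1]$ when $e_1\equiv e_2$ on $[0,T]$, I would initialise the scheme from a common subsolution and introduce temporal cutoffs ensuring that $w_{n+1}$ vanishes on $[0,T_1]$ and is chosen identically for both branches on any interval where the energy targets still agree, branching only inside $[T,1]$. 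Passing to the limit yields the required pair of distinct distributional solutions of \eqref{eq:pnse} with the prescribed kinetic energies.
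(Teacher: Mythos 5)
Your proposal follows essentially the same route as the paper: a Reynolds-stress iteration built from concentrated, high-frequency, travelling building blocks (the paper's concentrated localized travelling Mikado flows play the role of your intermittent jets), with the dissipative error $R_{\A}$ controlled through the growth estimates on $\A$ so that smallness of $\nabla w_{n+1}$ in $L^{r}$ for some $r$ with $q-1<r<\tfrac{2d}{d+2}$ — the source of the threshold $q<\tfrac{3d+2}{d+2}$ — drives it to zero, the kinetic energy matched through the amplitudes, and non-uniqueness obtained by branching the time-local construction only after time $T$. This is precisely how the paper proceeds (Proposition \ref{prop:main} rerun with $q$ replaced by $r$ in the parameter choices, dropping the dissipative part of the energy), so your approach is correct and not genuinely different.
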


%
%

%
%

\subsubsection{Existence of multiple solutions for any $L^2$ data} In Theorems \ref{thm:main}, \ref{thm:two} the initial data are attained strongly (in particular we can add  initial values to the distributional formulas for solutions, extending test functions to non-vanishing ones at $t=0$), but they are constructed in the convex integration scheme, thus possibly non-generic. This issue is addressed in our third theorem. It shows existence of energy solution emanating from {\emph any} solenoidal vector field in $L^2$, for power laws below the scaling exponent. 
\begin{theoremmain}
\label{thm:exist}
Consider \eqref{eq:pnse}, \eqref{eq:orl} on $\Td  \times (0,1)$. Let $q< \frac{3d+2}{d+2}$, $r \in (\max\{1, q-1\}, \frac{2d}{d+2})$. Fix an arbitrary nonzero $v_0 \in L^2(\T^d)$, $\divv v_0 = 0$. 
There exist continuum of $v \in C((0,1]; L^2(\T^d)) \cap L^r ((0,1); W^{1,r}(\T^d))$ such that
\begin{enumerate}[1)]
\item $v$ solves \eqref{eq:pnse} distributionally, i.e.
\[
\int_0^1 \int_\Td -v \cdot \pat \varphi - v\otimes v  \nabla \varphi   + \A(\D) \nabla \varphi = 0, \qquad  \forall_{t \in [0,1]} \quad \int_\Td v (t) \cdot \nabla \psi = 0
\]
for any divergence-free $\varphi \in C^1 (\Td  \times [0,1])$ vanishing at $t=0$ and $t=1$, and any  $\psi \in C^1 (\Td)$;
\item $v|_{t=0} = v_0$, in the sense that as $t \to 0^+$, $v(t) \to v_0$ weakly in $L^2$ and strongly in $L^{q_0}$, any $q_0 <2$.
\end{enumerate}
\end{theoremmain}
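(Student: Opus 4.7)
\emph{Overall strategy.} The plan is to reduce Theorem~C to the construction underlying Theorem~\ref{thm:two} by first regularizing the rough datum $v_0\in L^2$ by smooth mollifications and by inserting a short-time classical solution near $t=0$. For each such smooth approximation $v_0^n\to v_0$ in $L^2$ we build a family of distributional solutions $\{v_n^\alpha\}_\alpha$ on $[0,1]$ with $v_n^\alpha(0)=v_0^n$, parametrized by a scalar $\alpha$ in an uncountable interval, and then pass to the limit $n\to\infty$.

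\emph{Construction of the approximants.} Standard parabolic theory for the monotone quasilinear operator $-\divv\A(\D)$ (see e.g.\ \cite{MNRR}) yields, for each $n$, a local-in-time strong solution $\bar v^n\in C([0,\tau_n];H^s)$ with $\bar v^n(0)=v_0^n$; the lifespan $\tau_n>0$ may shrink with $n$ but is positive. On $[\tau_n/2,1]$ we run the scheme underlying Theorem~\ref{thm:two}, started from the smooth state $\bar v^n(\tau_n/2)$ and with a non-increasing prescribed kinetic-energy profile $e_n^\alpha\in C^\infty([\tau_n/2,1])$ satisfying $e_n^\alpha(\tau_n/2)=\|\bar v^n(\tau_n/2)\|_{L^2}^2$ and, say, $e_n^\alpha(1)=\alpha$. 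Gluing the two pieces at $t=\tau_n/2$ produces $v_n^\alpha\in C([0,1];L^2)\cap L^r([0,1];W^{1,r})$ solving \eqref{eq:pnse} distributionally on $[0,1]$, with $v_n^\alpha(0)=v_0^n$ and uniform bounds
\begin{equation*}
\|v_n^\alpha\|_{L^\infty(L^2)}+\|v_n^\alpha\|_{L^r(W^{1,r})}\leq C(\|v_0\|_{L^2}),
\end{equation*}
independent of $n,\alpha$.

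\emph{Passage to the limit and initial attainment.} From the equation and the uniform bounds above, $\partial_t v_n^\alpha$ is bounded in $L^1_t(W^{-k,1}_x)$ for some $k\in\NN$. Aubin--Lions compactness, combined with the $L^\infty(L^2)$ control, gives, along a subsequence, strong convergence $v_n^\alpha\to v^\alpha$ in $C([0,1];H^{-s})$ for any $s>0$ and, by interpolation, in $L^{q_0}_{t,x}$ for every $q_0<2$; this suffices to pass to the limit in the convective term $v_n^\alpha\otimes v_n^\alpha$. The stress $\A(Dv_n^\alpha)$ converges weakly in $L^{r/(q-1)}$, and its weak limit is identified with $\A(Dv^\alpha)$ via the Minty--Browder monotonicity trick, since $\A$ given by \eqref{eq:orl} is monotone for $q>1$. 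The initial attainment $v^\alpha(t)\to v_0$ as $t\to 0^+$ (weakly in $L^2$, strongly in $L^{q_0}$ for every $q_0<2$) follows since $v_n^\alpha=\bar v^n$ on $[0,\tau_n/2]$ together with $v_0^n\to v_0$ in $L^2$ and the uniform $L^\infty(L^2)$ bound.

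\emph{Main obstacle and continuum.} The delicate point is identifying the weak limit of $\A(Dv_n^\alpha)$: in the regime $r<2d/(d+2)$ no strong compactness of $Dv_n^\alpha$ is available, so one must rely on Minty monotonicity, with the crucial $\limsup$-inequality obtained by exploiting the prescribed kinetic-energy profile delivered by the scheme of Theorem~\ref{thm:two} (which substitutes for a classical energy inequality that energy solutions would enjoy). To obtain a \emph{continuum} of distinct limits $v^\alpha$, we exploit the fact that the CI construction enforces $\|v_n^\alpha(1)\|_{L^2}^2=\alpha$ exactly; this identity is preserved in the limit via strong $L^{q_0}$ convergence and the continuity of the prescribed profile, so distinct values of $\alpha$ produce pairwise distinct $v^\alpha$, yielding a continuum of solutions with initial datum $v_0$.
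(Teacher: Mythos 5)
There is a genuine gap, and it sits exactly at the heart of the matter: your passage to the limit $n\to\infty$ over the mollified data is a compactness argument, and compactness is precisely what is unavailable in this regime. Your uniform bounds live in $L^\infty(L^2)\cap L^r(W^{1,r})$ with $r<\tfrac{2d}{d+2}$, so $W^{1,r}$ does not embed (even continuously, let alone compactly) into $L^2$; Aubin--Lions then only yields convergence in $C([0,1];H^{-s})$, and the claimed upgrade ``by interpolation'' to strong convergence in $L^{q_0}_{t,x}$ for every $q_0<2$ is false --- a bounded sequence in $L^\infty(L^2)$ converging in $H^{-s}$ need not converge strongly in any $L^{q_0}$, $q_0\geq 1$ (take $\sin(nx)$). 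Without strong $L^2_{loc}$ convergence you cannot pass to the limit in $v_n^\alpha\otimes v_n^\alpha$; this is exactly why the paper's existence theory below $\tfrac{2d}{d+2}$ cannot proceed by compactness at all. The Minty step has a second, independent problem: the $\limsup$ inequality it requires is a form of energy inequality for the dissipative term, but the convex-integration solutions of Theorem~\ref{thm:two} satisfy no such inequality (only the \emph{kinetic} energy is prescribed), and indeed $\int\A(Dv_n^\alpha)Dv_n^\alpha\sim\int|Dv_n^\alpha|^q$ need not even be finite, since the scheme only controls $Dv_n^\alpha$ in $L^r$ with $r$ possibly below $q$. A further, more minor issue is that Theorem~\ref{thm:two} prescribes energy profiles, not the value of the solution at a given time, so ``running the scheme started from the smooth state $\bar v^n(\tau_n/2)$'' is not something it provides; one needs a gluing mechanism.

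The paper avoids all of this by never taking a compactness limit over regularized data. It runs the convex-integration iteration directly on the rough datum, starting from the triple $(v_a,\tilde\pi_a,-v_a\otimes v_a)$ where $v_a$ is a Leray--Hopf solution of the non-Newtonian \emph{Stokes} Cauchy problem \eqref{eq:pse} (which exists by monotonicity, since there is no convective term to handle), and modifies the iteration step (Proposition~\ref{prop:main3}) by multiplying the perturbations with a time cutoff $\chi$ vanishing on $[0,\sigma/2]$, so that every iterate keeps the initial datum exactly and the only price is an extra error $R_{cutoff}$ controlled by negative powers of $\lambda$. The iterates then form a Cauchy sequence in $C((0,1];L^2)\cap C([0,1];W^{1,r})$ \emph{by construction}, so the limit is taken strongly and no compactness or Minty identification is needed; the continuum of solutions comes from choosing different values of the energy-pumping parameter $\gamma$ at a fixed iteration step, which separates the kinetic energies of the limits on $[1/2,1]$. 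If you want to salvage your outline, you would have to replace the whole diagonal-limit-plus-Minty step by such an intrinsic strong-convergence argument.
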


\subsection{Differences between our non-uniqueness and existence results}
The non-uniqueness Theorems \ref{thm:main}, \ref{thm:two} focus on possibly strongest notions of solutions: they allow, respectively, for full- or kinetic energy inequality and strong attainment of a (constructed) initial datum, but they do not produce non-unique solutions for any initial datum. Conversely, Theorem \ref{thm:exist} provides existence of many weak solutions for an arbitrary solenoidal initial datum in $L^2$. In particular, this is the first existence proof for the case of $q \le \frac{2d}{d+2}$. The obtained solutions are, however, much weaker than that of Theorems \ref{thm:main}, \ref{thm:two}: they do not allow for any kind of energy inequality (in fact, even their kinetic energies are in a sense pathologically large) and the initial datum is attained merely in a weak sense.

\subsection{The $3$d Navier-Stokes case}
Theorems \ref{thm:two}, \ref{thm:exist} cover also the case of non-unique weak solutions of three-dimensional Navier-Stokes equations, first proven in \cite{BV19}. Our Theorem \ref{thm:two} shows that $\nabla v \in L^{6/5-}$. This probably holds for solutions constructed in  \cite{BV19} as well, though the best regularity claimed there is $\curl v \in L^1$. Theorem \ref{thm:exist} produces infinitely many weak solutions for any divergence-free datum in $L^2$ (but with unnaturally high energies).


%
%

\subsection{Methodology and plan} Our approach follows the convex integration methods introduced to inviscid fluid dynamics in \cite{DLS09, DLS13}, culminating in \cite{Ise16, BDLSV18}, and extended to the Navier-Stokes case in the important paper \cite{BV19}. Results on a system involving fractional laplacian
  can be found in \cite{ColDLDR18, DR20, TitLuo20}. Other related interesting results include \cite{CheShv14sima, CheLuo19, CheLuo20, CheDai19, BucVic19ems, BeeBucVic20, BucShkVic19}.

We stay close to the concentration-oscillation method developed for the transport equation in \cite{ModSze18}, \cite{modena-szekelyhidi18}, and localised to avoid dimension loss in \cite{ModSat20}, see also \cite{BCDL}. 

The basic picture of the construction, as in any convex integration scheme applied to the equations of fluid dynamics, is the following. 
Given an exact flow $(v, \pi)$, i.e.\ a solution to  \eqref{eq:pnse}, one tries to distinguish the good (`laminar', `averaged') component of $v$, i.e.\  $\langle v \rangle$ and the remainder, thought to be responsible for turbulence (interestingly, the case $q=3$ in  \eqref{eq:orl}, where scaling \eqref{eq:sca} fails, is the Smagorinsky model for turbulence). A typical averaging process $\langle \cdot \rangle$ does not commute with nonlinear quantities, thus applying  $\langle \cdot \rangle$ to  \eqref{eq:pnse} yields for $u = \langle  v\rangle$
\[
\pat u + \divv (u\otimes u)  - \divv \A (\Du) +\nabla \langle \pi \rangle= \divv \big(u\otimes u -  \langle v \otimes v \rangle\big) - \divv \big( \A (\Du) -  \langle \A (\D) \rangle \big) =: \divv R.
\]
Above, $u$ is a well-behaved flow and the Reynolds stress $R$ encodes the difference between $u = \langle  v\rangle$ and the exact $v$ itself. The rough idea behind producing non-unique solutions to \eqref{eq:pnse} is to reverse-engineer the above picture. We can thus consider the following relaxation of \eqref{eq:pnse}

\begin{equation}\label{eq:pR-intro}
\begin{aligned}
\pat u + \divv (u\otimes u)  - \divv \A (\Du) +\nabla \pi&= -\divv R, \\
\divv u &= 0.
\end{aligned}
\end{equation}

Assume we have identity \eqref{eq:pR-intro} with certain $(u_0,\pi_0,R_0)$. It is easy to find at least one smooth solution of \eqref{eq:pR},
since $R_0$ is at our disposal. If one can produce another $u_1, q_1$ such that $(u_1,q_1,R_1)$ solves \eqref{eq:pR-intro} and $R_1$ is strictly smaller than $R_0$, there is a hope to iteratively diminish the Reynolds part $R_n$ to $0$ with $n \to \infty$. Consequently, in the limit one produces an exact solution $v,\pi$. Non uniqueness in the above procedure may be specified in at least two ways: 
\begin{itemize}
\item either by enforcing $v$ to be equal at some times, say for $t \in [0,1/3]$, to a given regular solution $v_1$ and for $t \in [2/3,1]$ to another regular solution $v_2$, as  for instance in \cite{BCV19} or \cite{ModSze18}.
\item or by specifying a kinetic energy profile, see e.g.\ \cite{BV19}, \cite{DLS13}, or the present work. 
\end{itemize}

\subsection{Organisation of proofs}
In Section \ref{sec:iterS}, we state the main proposition of  the paper, i.e. Proposition \ref{prop:main}, which contains the inductive step described above, from $(u_0, \pi_0, R_0)$ to $(u_1, \pi_1, R_1)$, with $R_1$ ``much smaller'' than $R_0$.  Section \ref{sec:pre} gathers preliminary material. 
In Section \ref{sec:mik} we introduce a generalisation of Mikado flows that serves as a building block for $u_1$ given $u_0$. Next, in Section \ref{sec:step}, assuming a solution $(u_0, \pi_0, R_0)$ to \eqref{eq:pR-intro} is given, we define $(u_1, \pi_1, R_1)$. 
Estimates for $(u_1-u_0)$ and $R_1$ occupy Section \ref{sec:est}. Section \ref{sec:proppf} concludes the proof of the main Proposition \ref{prop:main}. Having it in hand, we prove Theorem \ref{thm:main} in Section \ref{sec:pfT1}. The proofs of Theorems \ref{thm:two}-\ref{thm:exist} follow similar lines and therefore are only sketched in Sections \ref{sec:pfT2}-\ref{sec:pfT3}.


\subsection{Notation}
We use mostly standard notation, e.g.\ $\Td$ denotes the $d$-dimensional torus $[0, 1]^d$, $\dot W^{1,q}$ is a homogenous Sobolev space, $C_0^\infty(\T^d; B)$ are smooth functions with mean zero, domain $\T^d$ and values in set $B$ (the target set will be sometimes omitted). We take $\NN=\{1,2, \dots\}$.

We suppress the variables and the spatial domain of integration, if no confusion arises. We use $| \cdot |_.$ instead of $\| \cdot \|_.$ for norms. For $L^p$-norms on the torus $\Td$, we will abbreviate $|\cdot|_{L^p(\Td)}$ to $| \cdot |_{L^p}$ or even to $| \cdot |_{p}$. In other cases, e.g. when taking the $L^p$-norm on $\Rd$, we will explicitly write the underlying domain, where the norm is calculated, e.g. $|\cdot|_{L^p(\Rd)}$. The finite-dimensional norm is  $| \cdot |$. The projection onto null-mean functions is $\Po f:= f - \dashint_\Td f$. 

We will call $d \times d$ (symmetric) matrices \emph{(symmetric) tensor}. For a tensor $T$, we denote its traceless part by $\mathring {T} := T - \frac{1}{d} \tr(T) \Id$. The space of symmetric tensors will be denoted by $\mathcal{S}$, its open subset of positive definite tensors by $\mathcal{S}_+$. If $R$ is a symmetric tensor, $\divv R$ is the usual row-wise divergence. 


We use two types of constants $M$'s, which are uniform over iterations, and $C$'s which are not (both possibly with subscripts), for details see Section \ref{ssec:con}. All constants may vary between lines.

Further notation is introduced locally when needed.

\section{Main proposition: an iteration step}\label{sec:iterS}
Recall that $\mathcal{S}$ is the space of symmetric tensors. 
\begin{defi}
\label{def:nnr}
A solution to {\em{the Non-Newtonian-Reynolds system}} is a triple $(u,\pi,R)$
where
\begin{equation*}
u \in C^\infty([0,1] \times \T^d; \RR^d), \quad \pi \in C([0,1] \times \T^d; \RR) \quad R \in C([0,1] \times \T^d; \mathcal S)
\end{equation*}
with spatial null-mean $u,\pi$, satisfying 
\begin{equation}\label{eq:pR}
\begin{aligned}
\pat u + \divv (u\otimes u)  - \divv \A (\Du) +\nabla \pi&= -\divv \cR, \\
\divv u &= 0.
\end{aligned}
\end{equation}
in the sense of distributions. 
\end{defi}

\begin{remark}\label{rem:nonsm}
Despite smoothness of $u$, we can not require that \eqref{eq:pR} is satisfied in the classical sense or $\pi,R$ are smooth (in space), because of non-smoothness of $\A(Du)$.
\end{remark}

\begin{remark}[$R$ vs $\cR$]
Use of the trace-free Reynolds stress simplifies computation, in particular proof of the energy iterate Proposition \ref{prop:ei}. The difference between $R$ to $\cR$ is facilitated by the ambiguity of pressure: $(u,\pi,R)$ solves  \eqref{eq:pR}  $\iff (u,\pi-\frac{1}{d} tr R,\cR)$ solves \eqref{eq:pR}. 
\end{remark}

As observed in the introduction, the crucial point in the convex integration scheme is, given $(u_0, q_0, R_0)$, to produce an appropriate correction $(u_1,q_1,R_1)$ which decreases $R_i$, improves the energy gap, and retains as much regularity as possible. This single iteration step is given by 
%
\begin{prop}\label{prop:main} 
Let $\nu_0, \nu_1 \geq 0$ and $q< \frac{2d}{d+2}$ be fixed. Fix an arbitrary $e \in C^\infty([0,1]; [\frac{1}{2},1])$. There exist a constant $M$ such that the following holds. 

Let $(u_0, \pi_0, R_0)$ be a solution to the Non-Newtonian-Reynolds system \eqref{eq:pR}, as in Definition \ref{def:nnr}. Let us choose any $\delta, \eta, \epsilon \in (0,1]$.
Assume that
\begin{equation}\label{eq:ass_e}
\frac{3}{4} \delta e(t) \le  e(t) -  \Big( \int_\Td |u_0|^2 (t) + 2 \int_0^t \int_\Td  \A (D u_0) D u_0 \Big) \le \frac{5}{4} \delta e(t)
\end{equation}
and
\begin{equation}\label{eq:Rdelta}
|\cR_0 (t)|_{L^1} \le \frac{\delta}{2^7 d}.
\end{equation}
Then, there is another solution $(u_1, \pi_1, R_1)$ to \eqref{eq:pR} (as in Definition \ref{def:nnr}) such that
\begin{subequations}\label{eq:mp_all}
\begin{equation}\label{eq:mp_up}
|(u_1 - u_0)(t)|_{L^2} \le M \delta^\frac12 
\end{equation} 
\begin{equation}\label{eq:mp_uwp}
|(u_1 - u_0)(t) |_{W^{1,q}} \le \eta
\end{equation}
\begin{equation}\label{eq:mp_R}
|R_1 (t)|_{L^1} \le \eta.
\end{equation}
\end{subequations}
Furthermore
\begin{equation}\label{eq:e_contr}
\frac{3}{8} \delta e(t) \le  e(t) - \Big( \int_\Td |u_1|^2 (t) + 2 \int_0^t \int_\Td  \A (D u_1) D u_1 \Big) \le \frac{5}{8} \delta e(t).
\end{equation}
\end{prop}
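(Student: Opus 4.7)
The plan is to add to $u_0$ a perturbation $w = w_p + w_c$ with $u_1 := u_0 + w$, where $w_p$ is a superposition of the generalized Mikado flows from Section \ref{sec:mik} and $w_c$ is a divergence-free corrector. Two parameters control the construction: a large frequency $\lambda \in \NN$ and a concentration $\mu \in (1, \lambda)$. Following the concentration-oscillation methodology of \cite{ModSze18, ModSat20}, I would introduce a temporal partition of unity $\{\chi_i(t)\}$ on intervals of size $\sim \lambda^{-1}$ and write
\[
w_p(t,x) \;=\; \sum_i \chi_i(t) \sum_{\xi \in \Lambda} a_{i,\xi}(t)\, W_\xi^{\lambda, \mu}(x),
\]
with amplitudes $a_{i,\xi}$ produced by a geometric lemma so that, on each slab,
\[
\sum_{\xi \in \Lambda} a_{i,\xi}(t)^2 \int_{\T^d} W_\xi^{\lambda, \mu} \otimes W_\xi^{\lambda, \mu} \;=\; \rho_i(t)\,\mathrm{Id} \;-\; \cR_0(t, x_i),
\]
where $\rho_i(t) \sim \delta e(t)$ is tuned to encode the target \eqref{eq:e_contr}. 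Assumption \eqref{eq:Rdelta} guarantees that $\mathrm{Id} - \cR_0/\rho_i$ lies in the compact positivity cone where the geometric lemma applies.

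After the usual $L^2$ normalization, the Mikado blocks satisfy $|W_\xi^{\lambda,\mu}|_{L^p} \sim \mu^{(d-1)(1/2 - 1/p)}$ and $|\nabla W_\xi^{\lambda,\mu}|_{L^p} \sim \lambda\, \mu^{(d-1)(1/2 - 1/p)}$. Hence $|w|_{L^2} \lesssim \rho^{1/2}$, which, combined with $\rho \sim \delta e(t)$, yields \eqref{eq:mp_up}, while $|w|_{W^{1,q}} \lesssim \lambda\, \mu^{(d-1)(1/2 - 1/q)}$. The hypothesis $q < \frac{2d}{d+2}$ is equivalent to $(d-1)(1/2 - 1/q) < -\tfrac{d-1}{d} < 0$, so choosing first $\mu$ very large and then $\lambda$ appropriately delivers \eqref{eq:mp_uwp} for any prescribed $\eta$. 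The corrector $w_c$, built by the standard Leray-type formula to restore $\divv w = 0$, is of lower order in $\mu$ and absorbs into the same estimates.

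The new Reynolds stress $R_1$ is defined implicitly as $\idivn$ applied to
\[
\pat w \;+\; \divv(u_0 \otimes w + w \otimes u_0) \;+\; \divv(w \otimes w + \cR_0) \;-\; \divv\bigl(\A(Du_1) - \A(Du_0)\bigr),
\]
modulo pressure, decomposing into an oscillation error, a transport error, a time-derivative error, and a nonlinear dissipation error $\idivn\bigl(\A(Du_1) - \A(Du_0)\bigr)$. The first three are handled in the standard way: the geometric-lemma cancellation reduces $w\otimes w + \cR_0$ to a purely high-frequency object, so $\idivn$ produces the usual $\lambda^{-1}$ gain, and by taking $\mu$ sufficiently large compared to an appropriate power of $\lambda$ their $L^1$ norm falls below $\eta$. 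The dissipation error is the main obstacle, since $\A$ is nonlinear and $Dw$ is pointwise very large: using the structural bound $|\A(A) - \A(B)| \lesssim (\nu_0 + \nu_1(|A| + |B|))^{q-2} |A - B|$ valid for $q<2$, together with $\idivn$ acting on $L^p$ with $p \in (1, \infty)$, its $L^1_{t,x}$ norm is controlled by a negative power of $\mu$ precisely when $q < \frac{2d}{d+2}$, allowing \eqref{eq:mp_R}. This balance of nonlinear dissipation against concentration-induced blow-up of $Dw$ is the technical core and explains the sharp threshold on $q$.

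Finally, \eqref{eq:e_contr} is verified by expanding $\int|u_1|^2 = \int|u_0|^2 + 2\int u_0 \cdot w + \int|w|^2$: the cross term is negligible (high-frequency oscillation of $w$ against the smooth $u_0$), while $\int|w|^2 \approx d\,\rho(t)$ by orthogonality of distinct Mikado directions and the choice of amplitudes. The residual $2\int_0^t\int\bigl(\A(Du_1)Du_1 - \A(Du_0)Du_0\bigr)$ is once more bounded by the same nonlinear estimate used for $R_1$. Selecting $\rho(t)$ to shift the gap from its present band near $\delta e(t)$ down to the band near $(\delta/2) e(t)$ then yields the two-sided bound \eqref{eq:e_contr}, closing the induction.
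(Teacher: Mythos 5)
There is a genuine gap, and it sits exactly at the technical core of the paper. Your building blocks are the \emph{concentrated} Mikado flows with $L^p$-scaling governed by $\mu^{(d-1)(1/2-1/p)}$. Two problems. First, the stated gradient bound $|\nabla W^{\lambda,\mu}_\xi|_{L^p}\sim\lambda\,\mu^{(d-1)(1/2-1/p)}$ is wrong: the profile varies on the spatial scale $(\lambda\mu)^{-1}$, so each derivative costs a factor $\lambda\mu$, giving $|\nabla W|_{L^q}\sim\lambda\,\mu^{1-(d-1)(1/q-1/2)}$ (this is \eqref{eq:cmfb} in the paper with $i=1$). For the scheme to close one needs this to tend to $0$, which forces $(d-1)(1/q-1/2)>1$, i.e.\ $q<\tfrac{2(d-1)}{d+1}$ — strictly below the claimed threshold $\tfrac{2d}{d+2}$, and empty for $d=3$ (where $\tfrac{2(d-1)}{d+1}=1$). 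The arithmetic identity you invoke, that $q<\tfrac{2d}{d+2}$ makes $(d-1)(1/2-1/q)<-\tfrac{d-1}{d}$, is true but does not help: a negative exponent of $\mu$ of magnitude less than $1$ cannot beat the factor $\lambda\mu$ coming from the derivative. The reason the concentration is stuck at $d-1$ dimensions is structural: to keep $\divv W=0$ and $\divv(W\otimes W)=0$ the profile must be constant along the direction $\xi$, so it can only be compactly supported in $d-1$ variables. Your construction is therefore internally consistent but proves a strictly weaker statement than Proposition \ref{prop:main} (and nothing in dimension $3$).

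What is missing is precisely the paper's new ingredient: a genuinely $d$-dimensional concentration. The paper multiplies the standard Mikado profile $\Psi^k_{\lambda_2}$ by a $d$-dimensionally concentrated cutoff $\Phi^k_{\mu,\lambda_1}$, which restores the scaling $\mu^{d/2-d/r}$ in \eqref{eq:fdc} and hence the threshold $q<\tfrac{2d}{d+2}$, but destroys the identity $\divv(W^k\otimes W^k)=0$. This loss is compensated, following \cite{BV19}, by letting the cutoff travel in time with speed $\omega$ and introducing the corrector $Y^k$ of \eqref{eq:def-mikado-d}, whose time derivative cancels $\divv(W^k\otimes W^k)$ exactly (property \eqref{eq:fdmcut:5}) and whose size $\sim\mu^{d/2}/\omega$ is controlled by taking $\omega$ large. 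This forces the four-parameter bookkeeping $\mu,\lambda_1,\lambda_2,\omega$ of \eqref{eq:conp}, which your two-parameter scheme cannot reproduce. The remaining elements of your plan — the geometric decomposition of $\rho\,\Id-\cR_0$, the antidivergence treatment of the errors, the structural bound on $\A(Du_1)-\A(Du_0)$ for the dissipative error, and the energy pumping via $\rho(t)$ — do match the paper's strategy (the paper uses space-time dependent amplitudes $a_k(x,t)$ obtained by mollifying $\cR_0$ rather than a temporal partition of unity, a minor variation), but they all inherit the wrong threshold once the building block scales with $d-1$ instead of $d$.
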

\section{Preliminaries}\label{sec:pre}

\subsection{Control of $\A$}
We collect the needed growth estimates for $\A(Q)$ and for $\A(Q)Q$. 
%
%

\begin{lem}[Growth estimates for $\A$]\label{lem:gA}
Let $\A := (\nu_0 + \nu_1 |Q|)^{q-2}Q$, with $\nu_0, \nu_1 \geq 0$. Then
\begin{equation}
\label{eq:a-est-1}
|\A(Q) - \A(P)| \le \left\{ 
\begin{aligned}
 & C_{\nu_1} |Q-P|^{q-1} &\text{ for } \nu_0=0, q \leq 2 \\
 & C_{\nu_0} |Q-P| &\text{ for } \nu_0>0, q \leq 2 \\
 & C_{q, \nu_0, \nu_1}  |Q-P| \left(1 + |Q|^{q-2} + |P|^{q-2} \right) &\text{ for } q\ge2
\end{aligned}
\right.
\end{equation}
\begin{equation}
\label{eq:a-est-3}
|\A(Q)Q - \A(P)P| \leq C_{q, \nu_0, \nu_1} \big(1 + |Q|^{q-1} + |P|^{q-1} \big) |Q-P|.
\end{equation}
\end{lem}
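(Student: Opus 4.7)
\medskip

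\noindent\textbf{Plan.} The strategy is to treat the three pointwise estimates of \eqref{eq:a-est-1} case-by-case, and then reduce \eqref{eq:a-est-3} to a single scalar inequality. Write $\A(Q) = g(|Q|)Q$ with $g(s) = (\nu_0 + \nu_1 s)^{q-2}$. A direct computation gives
\[
D\A(Q) \;=\; g(|Q|)\,\mathrm{Id} \;+\; (q-2)\nu_1 (\nu_0+\nu_1|Q|)^{q-3}\,\frac{Q\otimes Q}{|Q|},
\]
so that $|D\A(Q)| \le C_q (\nu_0+\nu_1|Q|)^{q-2}$. This identity is the engine behind both the $\nu_0>0$ case of \eqref{eq:a-est-1} and the case $q\ge 2$, via the fundamental theorem
\[
\A(Q)-\A(P)=\int_0^1 D\A(P+t(Q-P))(Q-P)\,dt.
\]

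\medskip

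\noindent For the three branches of \eqref{eq:a-est-1}: In the case $\nu_0>0$, $q\le 2$, the factor $(\nu_0+\nu_1|X|)^{q-2}$ is non-increasing in $|X|$ and hence bounded by $\nu_0^{q-2}$, giving $|D\A|\le C_{\nu_0}$ and integrating yields the Lipschitz bound. In the case $q\ge 2$, I bound $(\nu_0+\nu_1|X|)^{q-2}\le C_{q,\nu_0,\nu_1}(1+|X|^{q-2})$, and along the segment $X=P+t(Q-P)$ use the elementary inequality $|X|^{q-2}\le 2^{q-2}(|P|^{q-2}+|Q|^{q-2})$ (valid for $q\ge 2$); integration finishes the case. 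The delicate branch is $\nu_0=0$, $q<2$, where $\A(Q)=\nu_1^{q-2}|Q|^{q-2}Q$ is the classical $p$-Laplacian-type map and one cannot integrate through the origin. Here I use the standard case split: assume WLOG $|P|\le |Q|$, and compare $|P-Q|$ with $|Q|/2$. If $|P-Q|\ge |Q|/2$, both $|\A(Q)|=\nu_1^{q-2}|Q|^{q-1}$ and $|\A(P)|\le \nu_1^{q-2}(|Q|+|P-Q|)^{q-1}$ are directly controlled by $C|P-Q|^{q-1}$. If $|P-Q|<|Q|/2$, the whole segment stays away from $0$ (with $|X|\gtrsim |Q|$), so I integrate $D\A$ to get $|\A(Q)-\A(P)|\le C|Q|^{q-2}|P-Q|$, and since $q-2\le 0$ and $|Q|\ge 2|P-Q|$ one has $|Q|^{q-2}\le C|P-Q|^{q-2}$, yielding $C|P-Q|^{q-1}$ again.

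\medskip

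\noindent For \eqref{eq:a-est-3}: Because $\A(Q)Q = (\nu_0+\nu_1|Q|)^{q-2}|Q|^2$ depends only on $|Q|$, set $h(s):=(\nu_0+\nu_1 s)^{q-2}s^2$ so that $\A(Q)Q-\A(P)P = h(|Q|)-h(|P|)$. A direct computation gives
\[
h'(s)\;=\;(\nu_0+\nu_1 s)^{q-3}\,s\,\bigl[q\nu_1 s + 2\nu_0\bigr].
\]
I will verify the uniform pointwise bound $|h'(s)|\le C_{q,\nu_0,\nu_1}(1+s^{q-1})$ by splitting $s\in[0,1]$ (where $h'$ is continuous and bounded; in the borderline $\nu_0=0$ case it is simply $q\nu_1^{q-1}s^{q-1}$) and $s>1$ (where $(\nu_0+\nu_1 s)^{q-3}\le C s^{q-3}$ and the bracket is $\le C s$, giving $|h'(s)|\le C s^{q-1}$). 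Then
\[
|h(|Q|)-h(|P|)|\;\le\;\Bigl(\sup_{s\in[\min,\max]}|h'(s)|\Bigr)\bigl||Q|-|P|\bigr|\;\le\;C\bigl(1+|Q|^{q-1}+|P|^{q-1}\bigr)|Q-P|,
\]
using $||Q|-|P||\le |Q-P|$ and monotonicity in $s$ of $1+s^{q-1}$.

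\medskip

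\noindent\textbf{Main obstacle.} The only non-routine point is the $\nu_0=0$, $1<q<2$ branch of \eqref{eq:a-est-1}, where $D\A$ blows up at the origin and one is forced into the dichotomy described above; everything else follows from a single application of the fundamental theorem of calculus combined with the elementary pointwise bounds on $D\A$ and $h'$.
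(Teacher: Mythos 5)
Your proof is correct, but it takes a genuinely different route from the paper's in the two places where something nontrivial happens. The paper first proves a scalar inequality for $f(t)=(\nu_0+\nu_1|t|)^{q-2}t$ via the integral representation $f(t)-f(s)=(q-1)(t-s)\int_0^1(\nu_0+\nu_1|s+x(t-s)|)^{q-2}dx$, controlling the possibly singular integrand with a lemma of Acerbi--Fusco; it then transfers the singular branch ($\nu_0=0$, $q\le 2$) to tensors by decomposing $\A(Q)-\A(P)$ into a move along the sphere $\partial B_{|Q|}(0)$ plus a move along a ray through the origin. You instead work with the tensor derivative $D\A$ throughout and handle the singular branch by the classical $p$-Laplacian dichotomy ($|Q-P|\ge|Q|/2$ versus a segment staying in $\{|X|\gtrsim|Q|\}$), which is more elementary and self-contained, at the cost of not producing the sharper weighted intermediate estimate $|f(t)-f(s)|\le C|t-s|(\nu_0+\nu_1(|t|+|s|))^{q-2}$ that the paper's route yields (and which is the standard statement in the monotone-operator literature). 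For \eqref{eq:a-est-3} the paper applies its $q\ge2$ argument to $\tilde f(t)=(\nu_0+\nu_1|t|)^{q-2}t^2$, whereas you exploit that $\A(Q)Q$ depends only on $|Q|$ and reduce directly to a one-variable mean value theorem; this is arguably the cleaner reduction, and your bound on $h'$ is the right one (note only the harmless constant slip $q\nu_1^{q-1}$ in place of $q\nu_1^{q-2}$ in the $\nu_0=0$ branch). Both arguments implicitly assume the non-degenerate case $\nu_0+\nu_1>0$, as does the paper.
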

The proof is standard. For convenience of the reader, we added it in Appendix.

\begin{remark}\label{rem:orl}
Lemma \ref{lem:gA} extends to other tensors $\A$, e.g.\ $(\nu_0+\nu_1 |Q|^2)^\frac{q-2}{2} Q$, or to ones given by an appropriate $N$-function. 
Consequently, our result extends to such tensors.
\end{remark}

\subsection{Nash-type decomposition}
Let us denote the set of positive-definite $d \times d$ symmetric tensors by $\cS_+$. We recall Lemma 2.4 in  \cite{DanSze17}
\begin{lem}\label{lem:NasUFS}
For any compact set $\cN \subset \cS_+$ there exists a finite set $K \subset \Zd$ and smooth functions $\Gamma_k: \cN \to [0,1]$, such that any $R \in \cN$ has the following representation:
\[
R = \sum_{k \in K} \Gamma^2_k (R) k \otimes k.
\]
\end{lem}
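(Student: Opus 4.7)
The plan is to prove this in two stages: first establish a local decomposition near each $R_0 \in \cN$, then patch together the local decompositions via a smooth quadratic partition of unity.

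For the local step, I would first check that the collection $\{k \otimes k : k \in \Zd\}$ spans $\cS$ (which follows from the $d(d+1)/2$ independent tensors $e_i \otimes e_i$ and $(e_i+e_j)\otimes(e_i+e_j)$) and, more importantly, that every positive definite $R \in \cS_+$ lies in the interior of the convex cone these rank-one tensors generate. The latter uses the spectral decomposition $R = \sum_i \mu_i v_i \otimes v_i$ with $\mu_i > 0$: approximating each $v_i$ by $q_i \in \QQ^d$ and clearing denominators to get $k_i \in \Zd$ preserves strict positivity by stability. Consequently, for each $R_0 \in \cN$ there is a finite $K_0 \subset \Zd$ and strictly positive reals $\lambda_k(R_0) > 0$ with $R_0 = \sum_{k \in K_0} \lambda_k(R_0)^2 \, k \otimes k$. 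Enlarging $K_0$ so that $\{k \otimes k : k \in K_0\}$ spans $\cS$, the smooth map
\[
\Phi_{K_0}:(0,\infty)^{K_0} \to \cS_+, \qquad \Phi_{K_0}(\lambda) = \sum_{k \in K_0} \lambda_k^2\, k \otimes k,
\]
has surjective differential at the chosen preimage, and the implicit function theorem produces a smooth, strictly positive lift $\lambda_k(R)$ on a neighborhood $U_{R_0}$ with $R = \sum_{k \in K_0} \lambda_k(R)^2 \, k \otimes k$ on $U_{R_0}$.

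For the global step, I would cover the compact $\cN$ by finitely many such neighborhoods $U_j$ with local data $(K_j, \lambda_{j,k})$, and choose a smooth quadratic partition of unity $\{\chi_j^2\}$ subordinate to $\{U_j\}$ with $\sum_j \chi_j^2 \equiv 1$ on $\cN$ (e.g.\ take bumps $\rho_j$ on $U_j$ and set $\chi_j = \rho_j / \sqrt{\sum_i \rho_i^2}$, smooth since the denominator is positive). Expanding
\[
R = \sum_j \chi_j^2(R)\, R = \sum_j \sum_{k \in K_j} \bigl(\chi_j(R)\,\lambda_{j,k}(R)\bigr)^2 \, k \otimes k
\]
already yields a square-weighted form, but a given integer vector $k$ may occur in several $K_j$.

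The one genuine technical obstacle is consolidating this double sum into a single square coefficient $\Gamma_k^2$ per $k \in \Zd$: combining $a^2 + b^2$ smoothly into a single square is in general impossible when both summands vanish simultaneously. I would sidestep this by making the local sets disjoint — replace each $k \in K_j$ by $N_j k$ where the $N_j$ are large, pairwise distinct integers (say, distinct primes), absorbing the dilation via the identity $\lambda^2 (k \otimes k) = (\lambda/N_j)^2 (N_j k) \otimes (N_j k)$. The rescaled sets $N_j K_j \subset \Zd$ are then pairwise disjoint, and setting $K := \bigcup_j N_j K_j$ together with $\Gamma_{N_j k}(R) := N_j^{-1} \chi_j(R) \lambda_{j,k}(R)$ produces finitely many smooth, nonnegative coefficients satisfying the required identity; a final uniform rescaling of all $\Gamma_k$ ensures the range $[0,1]$.
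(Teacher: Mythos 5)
Your overall strategy is sound and is, in essence, the standard proof of this kind of decomposition lemma. For the record, the paper does not prove the statement at all: it simply cites Lemma 2.4 of \cite{DanSze17}, which is the same statement on a small ball around the identity (with an additional symmetry $k\in K\Rightarrow -k\in K$ that is not needed here); the compact-set version follows by precisely the covering-plus-quadratic-partition-of-unity argument you give. So your local step (a strictly positive lattice decomposition of a fixed $R_0$, then a smooth lift via the submersion $\lambda\mapsto\sum_k\lambda_k^2\,k\otimes k$), the global patching with $\sum_j\chi_j^2\equiv 1$, and the dilation trick $\lambda^2\,k\otimes k=(\lambda/N)^2\,(Nk)\otimes(Nk)$ to disjointify the index sets together reconstruct the content of the cited lemma rather than bypass it. Your identification of the genuine obstruction --- that $a^2+b^2$ is not a smooth square when both summands vanish --- and its resolution by separating directions is exactly the right point to isolate.

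Three places need tightening, none fatal. First, ``enlarging $K_0$ so that $\{k\otimes k\}$ spans $\cS$'' is not enough as written: the differential of $\Phi_{K_0}$ at $\lambda^0$ is $h\mapsto 2\sum_k\lambda^0_k h_k\,k\otimes k$, whose image is only $\mathrm{span}\{k\otimes k:\lambda^0_k>0\}$, and a direction added with zero coefficient both takes the base point out of $(0,\infty)^{K_0}$ and contributes nothing to surjectivity. You must produce the spanning set with \emph{all} coefficients strictly positive, e.g.\ by first subtracting $\epsilon\sum_{f\in E}f\otimes f$ for a fixed spanning family $E\subset\Zd$ and small $\epsilon>0$, decomposing the still positive definite remainder, and then restoring the $\epsilon$-terms. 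Second, distinct primes alone do not make the sets $N_jK_j$ disjoint: $2\cdot(3,0,\dots,0)=3\cdot(2,0,\dots,0)$. You need the $N_j$ to be, say, distinct primes exceeding $\max\|k\|_\infty$ over all local families, so that $N_jk=N_ik'$ with $i\neq j$ would force $N_i\mid k$ componentwise, impossible when $0<\|k\|_\infty<N_i$; your word ``large'' presumably intends this, but it should be said. Third, a uniform rescaling of the $\Gamma_k$ alone destroys the identity $R=\sum_k\Gamma_k^2(R)\,k\otimes k$; to land in $[0,1]$ you must apply the dilation identity once more, replacing $(\Gamma_k,k)$ by $(\Gamma_k/m,\,mk)$ for a single integer $m\geq\sup_{R\in\cN,\,k}\Gamma_k(R)$ (finite by compactness and continuity), which also preserves disjointness. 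With these repairs the proof is complete.
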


\subsection{The role of oscillations}
\label{sec:invdiv}

The convex integration paradigm is to use fast oscillations of corrector functions (correcting $u_i$ to $u_{i+1}$ in our case, roughly speaking) to inductively diminish error terms (in our case Reynolds stresses $R_i$). Thus for a function $f$ and $\lambda \in \NN$ let us define
$$f_\lambda (x) := f (\lambda x).$$ 
Observe that $f_\lambda$ has the same $L^p$ norms as $f$ since we work on $\Td$, and a factor $\lambda$ appears for each derivative, i.e.\
\[
|\nabla^s f_\lambda|_p = \lambda^s |f|_p, \quad s \in \NN \cup \{0\}.
\]
It holds
\begin{prop}[Mean value] \label{prop:mosc}
Let $a \in C^\infty(\T^d; \RR)$, $v \in C_0^\infty(\T^d; \RR)$. Then for any $r \in [1, \infty]$
\begin{equation}\label{eq:mosc}
\Big|\int_\Td a v_\lambda \Big| \le \lambda^{-1} C_r |\nabla a|_{r} |v|_{r'} 
\end{equation}
\end{prop}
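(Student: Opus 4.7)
\medskip

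\noindent\textbf{Proof plan.} The idea is to exploit that $v_\lambda$ oscillates at scale $\lambda^{-1}$ while $v$ has mean zero. I would tile $\T^d$ with $\lambda^d$ pairwise disjoint cubes $\{Q_j\}$ of side $\lambda^{-1}$; on each one the rescaling $y=\lambda x$ is a bijection onto $\T^d$, so
\[
\int_{Q_j} v_\lambda(x)\,dx \;=\; \lambda^{-d}\int_{\T^d} v(y)\,dy \;=\; 0.
\]
Subtracting the mean $\bar a_j := \dashint_{Q_j} a$ on each cube therefore gives
\[
\int_{\T^d} a\,v_\lambda \;=\; \sum_j \int_{Q_j} (a-\bar a_j)\,v_\lambda.
\]

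On a single cube I would combine H\"older with the Poincar\'e--Wirtinger inequality $|a-\bar a_j|_{L^r(Q_j)} \le C\,\lambda^{-1}|\nabla a|_{L^r(Q_j)}$ (valid for every $r\in[1,\infty]$ on a Lipschitz domain) and the scaling identity $|v_\lambda|_{L^{r'}(Q_j)} = \lambda^{-d/r'}|v|_{L^{r'}(\T^d)}$, coming from the same change of variables. Summing over the $\lambda^d$ cubes via discrete H\"older,
\[
\sum_j |\nabla a|_{L^r(Q_j)} \;\le\; \bigl(\lambda^{d}\bigr)^{1/r'}\Bigl(\sum_j |\nabla a|_{L^r(Q_j)}^{r}\Bigr)^{1/r} \;=\; \lambda^{d/r'}|\nabla a|_{L^r(\T^d)},
\]
the powers $\lambda^{-d/r'}$ and $\lambda^{+d/r'}$ cancel and a single factor $\lambda^{-1}$ remains, which is the claim.

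A shorter alternative would be to write $v = \divv V$ with $V := \nabla\Delta^{-1}v$, so that $v_\lambda = \lambda^{-1}\divv V_\lambda$; integration by parts and H\"older would then yield the bound provided $|V|_{r'}\lesssim |v|_{r'}$. This Calder\'on--Zygmund estimate, however, fails precisely at the endpoints $r'\in\{1,\infty\}$ that the statement includes, which is why I favor the localized argument. I anticipate no real obstacle: Poincar\'e--Wirtinger on a cube and a scaling computation are the only ingredients.
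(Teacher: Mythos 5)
Your proof is correct, but it follows a genuinely different route from the paper's. The paper takes precisely the ``shorter alternative'' you set aside: it solves $\Delta h = v$, sets $G=\nabla h$, writes $v_\lambda=\lambda^{-1}\divv(G_\lambda)$, integrates by parts and applies H\"older to get $\lambda^{-1}|\nabla a|_r|G|_{r'}$; the endpoint trouble you correctly identify is then sidestepped not by abandoning the antidivergence but by a Sobolev-embedding detour, $|G|_{r'}\le C|\nabla G|_{L^{\min(r',d+1)}}\le C|v|_{L^{\min(r',d+1)}}\le C|v|_{r'}$, which keeps the Calder\'on--Zygmund exponent strictly inside $(1,\infty)$ for every $r<\infty$; the remaining case $r=\infty$ is outsourced to Lemma 2.6 of \cite{ModSze18}. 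Your tiling argument --- exact cancellation of $v_\lambda$ on each $\lambda^{-1}$-cube, Poincar\'e--Wirtinger with the correctly scaled constant $C\lambda^{-1}$, and the scaling identity $|v_\lambda|_{L^{r'}(Q_j)}=\lambda^{-d/r'}|v|_{r'}$ followed by discrete H\"older --- is completely elementary, self-contained, and treats both endpoints $r\in\{1,\infty\}$ on the same footing, which is a genuine advantage. What the paper's version buys is brevity given that the inverse-divergence/Calder\'on--Zygmund machinery is already set up and reused throughout (the operators $\idivn_N$, $\idivn^2_N$), so the same $G=\nabla\Delta^{-1}v$ object does double duty. One minor point worth making explicit in your write-up: the tiling into $\lambda^d$ congruent cubes uses $\lambda\in\NN$, which is indeed the standing assumption in the paper's definition of $f_\lambda$.
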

\begin{proof}
The case $r=\infty$ follows the proof of Lemma 2.6 in \cite{ModSze18}. For the case $r<\infty$, since $v$ is null-mean, let us solve the Laplace equation $\divv \nabla h = v$ and define $G:= \nabla h$. It holds $(\divv G)_\lambda = \lambda^{-1} \divv (G_\lambda)$  and thus, integrating by parts and using H\"older
\[
\Big|\int_\Td a v_\lambda \Big| = \lambda^{-1} \Big|\int_\Td a \divv G_\lambda  \Big| \le \lambda^{-1} |\nabla a|_r |G_\lambda|_{r'} = \lambda^{-1} |\nabla a|_r |G|_{r'}
\]
The Sobolev embedding for the null-mean $G$ yields $|G|_{r'} \le C |\nabla G|_{L^{\min (r',d+1)}} = C|\nabla^2 h|_{L^{\min (r',d+1)}}$. This is controlled thanks to Calder\'on-Zygmund theory by $|v|_{L^{\min (r',d+1)}}$.
\end{proof}
%

Even when the l.h.s.\ of \eqref{eq:mosc} is replaced with $\int_\Td |a v_\lambda|$, the decorrelation between frequencies of $a$ and $v_\lambda$ allows to improve the generic H\"older inequality to (for the proof cf.\ Lemma 2.1 of \cite{ModSze18}):

\begin{prop}[Improved H\"older]
\label{lem:improved-holder}
Let $f,g$ be smooth maps on $\T^d$. Let $r \in [1,\infty]$. Then
\begin{equation}\label{eq:imprH}
|f g_\lambda|_r \le |f|_r |g|_r + C_r \lambda^{-\frac{1}{r}} |f|_{C^1} |g|_r.
\end{equation}
\end{prop}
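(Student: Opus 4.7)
\medskip

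\noindent\textbf{Proof plan.} The strategy is the standard ``piecewise constant approximation'' argument that exploits the fact that, on each cube of side $1/\lambda$, the fast function $g_\lambda$ sweeps through a full period of $g$, while $f$ is nearly constant by its $C^1$ regularity.

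Since $\lambda \in \NN$, partition $\T^d$ into the $\lambda^d$ closed cubes $Q_j := \lambda^{-1}(j + [0,1]^d)$, $j \in \{0,\dots,\lambda-1\}^d$. Fix a reference point $x_j \in Q_j$ and define the piecewise constant approximation $\bar f(x) := f(x_j)$ for $x \in Q_j$. Then by the mean value theorem applied on each $Q_j$,
\begin{equation*}
\|f - \bar f\|_{L^\infty(\T^d)} \le \tfrac{\sqrt d}{\lambda}\,|f|_{C^1}.
\end{equation*}
Splitting $f = \bar f + (f - \bar f)$ and using Minkowski on $\T^d$,
\begin{equation*}
|f g_\lambda|_r \le |\bar f\, g_\lambda|_r + |(f-\bar f)g_\lambda|_r.
\end{equation*}

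The second (error) term is trivial: since $|g_\lambda|_r = |g|_r$ on $\T^d$,
\begin{equation*}
|(f-\bar f)g_\lambda|_r \le \|f-\bar f\|_{L^\infty}\,|g_\lambda|_r \le \tfrac{\sqrt d}{\lambda}\,|f|_{C^1}\,|g|_r.
\end{equation*}

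For the main term, the key computation is that on each cube $Q_j$ the change of variables $y = \lambda x$ maps $Q_j$ onto a unit translate of $\T^d$, so periodicity of $g$ gives
\begin{equation*}
\int_{Q_j} |g_\lambda(x)|^r\,dx = \lambda^{-d}\int_{\T^d}|g(y)|^r\,dy = \lambda^{-d}|g|_r^r.
\end{equation*}
Therefore, since $\bar f$ is constant on each $Q_j$,
\begin{equation*}
|\bar f\, g_\lambda|_r^r = \sum_j |f(x_j)|^r \int_{Q_j}|g_\lambda|^r = \lambda^{-d}|g|_r^r \sum_j |f(x_j)|^r = |\bar f|_r^r \,|g|_r^r,
\end{equation*}
so $|\bar f\, g_\lambda|_r = |\bar f|_r\,|g|_r \le \bigl(|f|_r + \tfrac{\sqrt d}{\lambda}|f|_{C^1}\bigr)|g|_r$, using the $L^\infty$ error bound once more.

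Combining the two pieces yields
\begin{equation*}
|f g_\lambda|_r \le |f|_r|g|_r + \tfrac{2\sqrt d}{\lambda}\,|f|_{C^1}\,|g|_r,
\end{equation*}
which is in fact stronger than the claimed inequality, since $\lambda^{-1}\le \lambda^{-1/r}$ for $\lambda\ge 1$ and $r\in[1,\infty]$. In the endpoint $r=\infty$ the statement is trivial. I do not foresee any real obstacle: the only point requiring care is the alignment of the cubes $Q_j$ with the periodicity of $g_\lambda$, which is exactly why $\lambda \in \NN$ is needed; choosing the midpoint $x_j$ of $Q_j$ instead of a corner (or averaging) only changes the constant, not the structure of the argument.
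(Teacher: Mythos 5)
Your proof is correct and follows essentially the same route as the reference the paper cites for this statement (Lemma 2.1 of \cite{ModSze18}): a decomposition of $\T^d$ into $\lambda^{-1}$-cubes aligned with the period of $g_\lambda$, a piecewise constant approximation of $f$ with $C^1$ error $O(\lambda^{-1})$, and the exact identity $|\bar f g_\lambda|_r = |\bar f|_r |g|_r$ from periodicity. The only difference is that you apply the triangle inequality before raising to the $r$-th power, whereas the cited proof estimates $|f|^r - |\bar f|^r$ and extracts the $r$-th root at the end, which is why it only records the rate $\lambda^{-1/r}$; your variant legitimately yields the sharper rate $\lambda^{-1}$, which of course implies the stated inequality.
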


\subsection{Antidivergence operators}
We provide now various inverse divergence operators, needed for construction of $R_1$ in Proposition \ref{prop:main}, with appropriate estimates. The purpose of the bilinear inverse divergences below is to extract oscillations of one function, say $g_\lambda$, out of the product $fg_\lambda$. The last of them, $\idivn^2_N$,
is an operator with symmetric tensor values, such that $\divv \divv \idivn^2_N f = f$ for every null-mean real function $f$; it facilitates construction of the $R_{lin}$ term of $R_1$, cf.\ \eqref{eq:defRlin}. 
\begin{prop}\label{prop:invdiv}
Let $p,r,s \in [1,\infty]$ and $\frac{1}{p} =  \frac{1}{s} +  \frac{1}{r}$.

\begin{enumerate}[(i)]
\item ($ \divn$: symmetric antidivergence)  There exists
$\divn: C_0^\infty(\T^d; \RR^d) \to C_0^\infty(\T^d; \cS)$
such that
$\divv \divn u = u$ and for $i\ge0$ one has
\begin{equation}\label{eq:inv_stnd}
| \nabla^i \divn u|_{p} \le  C_{k,p} |\nabla^i u|_{{p}} ,
\end{equation}
and for the fast oscillating $u_\lambda$
\begin{equation}\label{eq:inv_stnd2}
| \nabla^i \divn u_\lambda|_{p} \le C_{k,p} \lambda^{i-1} |\nabla^i u|_{p}  .
\end{equation}

\item ($ \idivn_N$: improved symmetric bilinear antidivergence) For any $N \ge1$ there exists a bilinear operator $\idivn_N: C^\infty(\T^d; \RR) \times C^\infty_0(\T^d; \RR^d)   \to C^\infty_0(\T^d; \cS)$ such that  $\divv \idivn_N (f, u) = f u - \dashint fu$ and 
\begin{equation}\label{eq:inv_imprA}
| \mathcal{R}_N (f, u_\lambda) |_{p} \leq C_{d,p,s,r,N}  | u |_{s} \Big(\frac{1}{\lambda} |f|_{r} + \frac{1}{\lambda^N} |\nabla^N f|_{r} \Big) .
\end{equation}

\item ($ \tilde \idivn_N$: improved symmetric bilinear antidivergence on tensors)
For any $N \ge1$ there exists a bilinear operator $\tilde \idivn_N: C^\infty(\T^d; \RR^d) \times  C^\infty_0(\T^d; \RR^{d \times d}) \to C^\infty_0(\T^d; \cS)$ such that  $\divv \tilde \idivn_N (v,T) = Tv - \dashint Tv$ and 
\begin{equation}\label{eq:inv_imprAt}
| \tilde \idivn_N (v, T_\lambda) |_{p} \leq  C_{d,p,s,r,N} |T|_{s} \Big( \frac{1}{\lambda} |v|_{r} + \frac{1}{\lambda^N} |\nabla^N v|_{r} \Big) .
\end{equation}

\item ($ \idivn^2_N$: improved symmetric bilinear double antidivergence) For any $N \ge1$ there exists a bilinear operator 
$\idivn^2_N: C^\infty(\T^d; \RR) \times C^\infty_0(\T^d; \RR)   \to C^\infty_0(\T^d; \cS)
$ such that $\divv \divv \mathcal{R}^2_N (f,g) = fg - \dashint fg$ 
and for any $j \in \NN \cup\{0\}$
\begin{equation}\label{eq:inv_impr2div}
|\nabla^{j} \idivn^2_N (f, g_\lambda)|_p  \le C_{j,d,p,s,r,N} \lambda^j |g|_{W^{j,s}} \Big( \frac{1}{\lambda^{2}} |f|_{r} + \frac{1}{\lambda^{N}} |\nabla^N f|_{r}  + \frac{1}{\lambda^{2N+j}} |\nabla^{2N +j} f|_{r} \Big).
\end{equation}

\end{enumerate}
\end{prop}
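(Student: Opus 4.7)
The plan rests on two primitives: (a) a symmetric-tensor-valued right inverse $\divn$ of $\divv$ on null-mean fields, and (b) a telescoping integration-by-parts identity that trades every highly oscillating factor $u_\lambda$ (or $g_\lambda$) for one power of $\lambda^{-1}$. The bilinear operators in (ii)--(iv) are then assembled by iterating this identity $N$ times and closing the remainder with a single application of the primitive $\divn$.

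For part (i), I would adopt the explicit De Lellis--Sz\'ekelyhidi formula (Lemma 4.3 of \cite{DLS13}),
\begin{equation*}
(\divn u)^{ij} = \partial_i\Delta^{-1}u^j + \partial_j\Delta^{-1}u^i - \tfrac{2}{d}\delta^{ij}\,\divv \Delta^{-1}u + (\text{lower-order symmetric traceless correction}),
\end{equation*}
which is manifestly symmetric, null-mean, and satisfies $\divv\divn u = u$ for all null-mean $u$. Estimate \eqref{eq:inv_stnd} is then Calder\'on--Zygmund for a Fourier multiplier of order $-1$. For \eqref{eq:inv_stnd2}, uniqueness in the null-mean class and a Fourier-series check on $\Td$ give $\divn(u_\lambda)(x) = \lambda^{-1}(\divn u)(\lambda x)$, whence each of the $i$ derivatives contributes an extra $\lambda$ in $L^p$.

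For (ii) and (iii) the basic construction is the telescoping identity
\begin{equation*}
f\,u_\lambda - \dashint f u_\lambda = \divv\Bigl(\sum_{k=0}^{N-1}(-1)^k\,\nabla^k f\cdot \divn^{(k+1)}(u_\lambda)\Bigr) + (-1)^N \nabla^N f\cdot \divn^{(N)}(u_\lambda),
\end{equation*}
obtained by repeatedly writing $u_\lambda = \divv \divn(u_\lambda)$ and integrating by parts. Each iterated antidivergence applied to a $\lambda$-rescaled null-mean field supplies a factor $\lambda^{-1}$ by the scaling observation from (i), so after $N$ steps the remainder has size $\lambda^{-N}|\nabla^N f|_r |u|_s$, while the $k=0$ term yields the leading $\lambda^{-1}|f|_r |u|_s$ contribution. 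A final $\divn$ from (i) applied to the scalar remainder upgrades the last vector-valued term to a symmetric tensor. H\"older with $\tfrac{1}{p}=\tfrac{1}{r}+\tfrac{1}{s}$ then gives \eqref{eq:inv_imprA}. Case (iii) is strictly analogous, with $T$ in place of $u$ and with the symmetric tensor structure obtained by contracting $T$ against $\divn^{(k)}(v)$-type iterated antidivergences of $v$.

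For (iv) I would iterate the scheme twice. A first pass using the scalar antidivergence $\nabla\Delta^{-1}$ in place of $\divn$ produces a null-mean vector field $V_N(f,g_\lambda)$ with $\divv V_N = fg_\lambda - \dashint fg_\lambda$ and $L^p$-bound controlled by $(\lambda^{-1}|f|_r + \lambda^{-N}|\nabla^N f|_r)|g|_s$. A second pass, applying the symmetric antidivergence of (i) to $V_N$ together with a further $N$-step telescoping against the $g_\lambda$-oscillations still visible inside $V_N$, returns a symmetric tensor $\idivn^2_N(f,g)$ with $\divv\divv\idivn^2_N(f,g) = fg - \dashint fg$. The two antidivergences supply the leading $\lambda^{-2}$; the inner and outer telescopings produce the $\lambda^{-N}$ and $\lambda^{-2N-j}$ error terms respectively; commuting $\nabla^j$ to the outside picks up the prefactor $\lambda^j$ and accesses the $W^{j,s}$ norm of $g$, yielding \eqref{eq:inv_impr2div}.

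The main technical obstacle I anticipate is preserving the symmetric-tensor structure throughout the telescoping, since the naive $\nabla\Delta^{-1}$-type antidivergence of a vector is not symmetric and must be corrected as in the formula above; at each iterate one must also verify that the remainder is genuinely mean zero so that the next $\divn$ is legitimate. A related subtlety on the torus is that $\Delta^{-1}$ is defined only on null-mean functions, so the scaling identity $(\divn u)_\lambda = \lambda\,\divn(u_\lambda)$ requires a short Fourier-series verification rather than a change of variables on $\RR^d$. Once these points are settled, the estimates become bookkeeping of H\"older and Calder\'on--Zygmund bounds against the powers of $\lambda$ supplied by each application of $\divn$.
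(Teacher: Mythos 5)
Your construction is essentially the paper's: a symmetric, $(-1)$-homogeneous right inverse of the divergence built from $\Delta^{-1}$ (the paper takes $\divn v = D\Delta^{-1}v + DP_H\Delta^{-1}v$ rather than the traceless De Lellis--Sz\'ekelyhidi formula; the trace is irrelevant here since it is absorbed by the pressure), combined with an $N$-fold Leibniz iteration pushing derivatives onto the slow factor and antidivergences onto the oscillating one. The paper writes this iteration as the recursion $\idivn_N(f,u)= f\,\divn u - \sum_k \idivn_{N-1}\big(\partial_k f, (\divn u)e_k\big)$ terminating in $\idivn_0=\divn(\,\cdot-\dashint\cdot\,)$, which is exactly your telescoping sum unrolled; the recursive form is also what keeps every term a symmetric tensor (a scalar times $\divn$ of something), settling the obstacle you flag. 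Two points to tighten. Your telescoping sum contains the intermediate terms of size $\lambda^{-(k+1)}|\nabla^k f|_r|u|_s$ for $1\le k\le N-1$, which do not appear in \eqref{eq:inv_imprA}; they must be absorbed into the two endpoint terms by interpolating $|\nabla^k f|_r$ between $|f|_r$ and $|\nabla^N f|_r$, which is the one word (``interpolation'') the paper spends on it. And in (iii) the iterated antidivergences must land on the oscillating factor $T_\lambda$, not on $v$ as your parenthetical ``$\divn^{(k)}(v)$'' suggests; the paper just reduces to (ii) column-wise via $\tilde\idivn_N(v,T)=\sum_k\idivn_N(v_k,Te_k)$. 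For (iv) your two nested first-order passes differ organizationally from the paper, which uses a single $(-2)$-homogeneous double antidivergence $\divnd=\nabla^2\Delta^{-2}$ with a second-order Leibniz recursion; your route works, but the outer $\divn$ cannot be applied blindly to $V_N$ (that only yields $\lambda^{-1}$), so the second telescoping must be run term by term on the explicit slow-times-fast decomposition of $V_N$ --- precisely the bookkeeping the one-shot double antidivergence avoids.
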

The proof is standard, cf. \cite{DLS13, ModSze18, ModSat20} and can be found in Appendix.

\begin{remark}[$\idivn^2_N \neq \idivn_N \circ \idivn_N$]
For the operator defined in (iv), we use the  notation $\idivn^2_N$ to denote that this operator acts as a double antidivergence. It does not coincide in general with $\idivn_N \circ \idivn_N$. 
\end{remark}

\begin{remark}[$\idivn_\infty$]\label{rem:ia}
The above bilinear antidivergences may be thought of as approximations of `ideal antidivergence' operators $\mathcal{R}_\infty$, $\mathcal{R}^2_\infty$ satisfying
\begin{equation}
\label{eq:r-infty}
| \mathcal{R}_\infty (f, u_\lambda) |_{p} \lesssim  \frac{1}{\lambda} | u |_{s}  |f|_r, \qquad | \nabla^j \mathcal{R}^2_\infty (f, g_\lambda) |_{p} \lesssim \lambda^{j-2} |g |_{W^{j,s}}  |f|_r,
\end{equation}
where the gap between $\idivn_N$ and $\idivn_\infty$ closes as $N \to \infty$, similarly for $\idivn^2_N$ and $\idivn^2_\infty$. 
\end{remark}

\section{Mikado flows}\label{sec:mik}

In this section we introduce the building blocks of our construction, namely the \emph{concentrated localized traveling Mikado flows}, a generalization of the \emph{Mikado flows} of \cite{DanSze17}. 

The original Mikado flows  of \cite{DanSze17} are fast oscillating pressureless stationary solutions to Euler equations having the form
\begin{equation}
\label{eq:mikado-dan-sze}
  \Psi_\lambda^k(x) k = \Psi^k(\lambda x) k,
\end{equation}
where $k \in \Zd$ is a direction. For a finite set of directions $K$ (given by the decomposition Lemma \ref{lem:NasUFS}) one can choose functions $\Psi^k \in C^\infty_0(\Td, \RR)$ so that the following holds for any $k, k' \in K$ and $\lambda \in \N$
\begin{equation}\label{sMikp}
\begin{aligned}
(i)\qquad& \divv   \Psi_\lambda^k k= 0, \\
(ii)\qquad& \divv ( \Psi_\lambda^k k \otimes  \Psi_\lambda^k k) = 0, \\
(iii)\qquad&\dashint_\Td (\Psi^k)^2 = 1 \quad \text{ thus } \quad\dashint_\Td \Psi_\lambda^k k \otimes  \Psi_\lambda^k k = k \otimes k,\\
(iv)\qquad&  \Psi_\lambda^k k \quad\text{ and } \quad\Psi_\lambda^{k'} k' \quad\text{ have disjoint supports for }k \neq k'.
\end{aligned}
\end{equation}
Satisfying property (i) is equivalent to choosing $\Psi^k$ so that    $\nabla \Psi^k \cdot k \equiv 0$, then also (ii) follows. Having (iii) is a normalisation of $\dashint_\Td (\Psi^k)^2$. Disjointness of supports (iv) is ensured in $d\ge3$ via an appropriate choice of an anchor point $\zeta_k$ for the cylinder $B_\rho(0) + \{ \zeta_k + s k \}_{s \in \RR} + \Zd$ (which is the periodisation of the cylinder $B_\rho(0) + \{ \zeta_k + s k\}_{s \in \RR}$  with radius $\rho$ and axis being the line passing through $\zeta_k$ with direction $k$). Such choice is possible in view of
\begin{lem}[Disjoint periodic tubes]
\label{l:disj-supp-dmin1}
Let $d \geq 3$. Then there exist $\{\zeta_k\}_{k \in K}$ and $\rho>0$ such that
\begin{equation}
\label{eq:disj-supp-d1}
\big( B_\rho(0) + \big\{ \zeta_k + s k\big\}_{s \in \RR} + \Zd  \big) \cap \big( B_\rho(0) + \big\{ \zeta_{k'} + s' k'\}_{s' \in \RR} + \Zd  \big) = \emptyset
\end{equation}
for all $k,k' \in K$, $k \neq k'$. 
\end{lem}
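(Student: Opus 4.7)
The plan is to reformulate the disjointness \eqref{eq:disj-supp-d1} as a strictly positive distance condition on the torus $\Td = \RR^d/\Zd$ and realise it via a measure/dimension argument that uses $d \geq 3$ exactly once.

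First I would unfold the tubes. Writing $\pi:\RR^d \to \Td$ for the canonical projection, condition \eqref{eq:disj-supp-d1} is equivalent to
\[
\mathrm{dist}_{\Td}\bigl(\pi(\zeta_k + \RR k),\; \pi(\zeta_{k'} + \RR k')\bigr) > 2\rho.
\]
A direct minimisation over $s,s' \in \RR$ and $n \in \Zd$ shows that, for each pair $k \neq k' \in K$,
\[
\mathrm{dist}_{\Td}\bigl(\pi(\zeta_k + \RR k),\; \pi(\zeta_{k'} + \RR k')\bigr) \;=\; \mathrm{dist}\bigl(\zeta_k - \zeta_{k'},\; E_{k,k'}\bigr),
\]
where $E_{k,k'} := \mathrm{span}(k,k') + \Zd \subset \RR^d$ (if $k,k'$ happen to be parallel, one replaces $\mathrm{span}(k,k')$ by $\RR k$; the argument below is unchanged). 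Hence the task reduces to finding $\{\zeta_k\}_{k\in K}$ with $\zeta_k - \zeta_{k'} \notin E_{k,k'}$ for every pair $k\neq k'$.

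The key geometric claim I would then establish is that each $E_{k,k'}$ is a \emph{closed} subset of $\RR^d$ of Lebesgue measure zero. Closedness exploits the arithmetic fact that $k,k' \in \Zd$: in the generic case where $k,k'$ are linearly independent, the lattice $\Zd \cap \mathrm{span}(k,k')$ contains the two independent integer vectors $k, k'$ and is therefore of full rank $2$ in the plane $\mathrm{span}(k,k')$. Consequently the image of $\Zd$ in the quotient $\RR^d/\mathrm{span}(k,k') \cong \RR^{d-2}$ is a discrete subgroup, so $E_{k,k'}$ is a locally finite union of parallel affine $2$-planes and hence closed. Its $d$-dimensional Lebesgue measure vanishes precisely because $d-2 \geq 1$, which is where the assumption $d \geq 3$ enters (in the parallel case one instead obtains a union of $1$-planes, and $d-1 \geq 1$ again suffices).

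To finish, since $K$ is finite, I would pick the anchors $\zeta_k$ inductively: having fixed $\zeta_{k_1},\dots,\zeta_{k_{i-1}}$, the next anchor $\zeta_{k_i}$ is to be chosen outside the finite union $\bigcup_{j<i}\bigl(\zeta_{k_j}+E_{k_i,k_j}\bigr)$, a closed set of Lebesgue measure zero whose complement is therefore open and dense. Once all $\zeta_k$ are selected, I would set $\rho_0 := \tfrac{1}{2}\min_{k\neq k'}\mathrm{dist}(\zeta_k-\zeta_{k'},E_{k,k'}) > 0$ and take any $\rho < \rho_0$. The one genuine obstacle I anticipate is precisely this upgrade from mere non-intersection to \emph{positive} distance, which is what a tube of radius $\rho > 0$ requires; it reduces to the closedness of $E_{k,k'}$ established above, so the real content of the argument is the arithmetic closedness step, while the rest is a routine measure-theoretic choice.
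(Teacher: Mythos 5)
Your proof is correct and follows essentially the same route as the paper's: both reduce the claim to choosing anchors $\zeta_k$ avoiding the sets $\mathrm{span}(k,k')+\Zd$, which are closed (because $k,k'\in\Zd$) and of Lebesgue measure zero (because $d\geq 3$), and then shrink $\rho$ using closedness. Your write-up merely makes two points more explicit than the paper does — the discreteness of the image of $\Zd$ in the quotient $\RR^d/\mathrm{span}(k,k')$ behind the closedness claim, and the quantitative choice of $\rho$ via a minimum of positive distances — but the argument is the same.
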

Proof can be found in Appendix.
%

The convex integration approach uses the properties (i)-(iv) to diminish a given 
Reynolds stress $R_0$ of a given solution $(u_0, \pi_0, R_0)$ to the Non-Newtonian-Reynolds system \eqref{eq:pR} by correcting $u_0$ roughly as follows. Thanks to (iii), we can decompose $R_0$ via the Nash Lemma \ref{lem:NasUFS} into 
\begin{equation}\label{eq:smb}
\sum_{k} \Gamma^2_k (R_0) k \otimes k =  \sum_{k} \Gamma^2_k (R_0) \dashint_\Td \Psi_\lambda^k k \otimes  \Psi_\lambda^k k.
\end{equation}
Let us add to $u_0$ the corrector $\tilde u = \Gamma_k (R_0) \Psi_\lambda^k k$. Recall the notation $\Po f:= f - \dashint_\Td f$. Thanks to (iv) and (ii)
\begin{equation*}
\divv \big( \tilde u \otimes \tilde u - R_0 \big)=  \sum_{k} \Po \left(\Psi_\lambda^k k \otimes  \Psi_\lambda^k k \right) \nabla \Gamma^2_k (R_0) .
\end{equation*}
Since the term $\Po \left(\Psi_\lambda^k k \otimes  \Psi_\lambda^k k \right)$ is $\lambda$-periodic and null mean, applying  $\mathcal{R}_\infty$ of \eqref{eq:r-infty} to the r.h.s.\ above yields $R_1$ of order $\lambda^{-1}$, such that $\divv R_1 = \divv (\tilde u \otimes \tilde u - R_0)$. So picking $\lambda$ large, i.e.\ letting $\Psi_\lambda^k$ oscillate fast, allows to deal with the error $R_0$. The property (i) allows to control $\divv\tilde u$.

\subsection{Concentrated Mikado flows}\label{ssec:cmf}
Since 
\begin{equation*}
|\nabla^i \Psi^k_\lambda k|_{L^p(\Td)} = \lambda^i  |\nabla^i \Psi^k k|_{L^p(\Td)}=C \lambda^i \to \infty \quad \text{ as } \quad \lambda \to \infty,
\end{equation*}
fast oscillations, in general, blow up derivatives of the corrector $\tilde u$. Thus controlling Sobolev norms of velocity fields appearing over convex integration steps seems problematic. This issue may be circumvented by a \emph{concentration} mechanism, introduced in  \cite{ModSze18} and critically inspired by \cite{BV19}.

Let us briefly explain it. For $n \le d$, take a compactly supported smooth function $f: \RR^n \to \RR$, rescale it to $f_\mu (x) = \mu^a f(\mu x)$, $\mu \ge 1$, and periodize without renaming to $f_\mu: \T^d \to \RR$. This is \emph{concentrating} and results in 
\begin{equation*}
|\nabla^i f_\mu|_{L^p(\T^d)} = \mu^{a+i - \frac{n}{p}} |\nabla^i f|_{L^p(\RR^n)} =C \mu^{a+i - \frac{n}{p}} \quad \text{ and } \quad |\nabla^i f_{\mu,\lambda}|_{L^p(\T^n)}  = C\lambda^i\mu^{a+i - \frac{n}{p}}.
\end{equation*}
%
This procedure yields the `concentrated Mikado' $\Psi_\mu^k k$ satisfying 
\[
|\nabla^i \Psi_\mu^k  k |_{L^p(\T^d)}  = C \lambda^i \mu^{a+i-\frac{n}{p}}.
\]
Having now an interplay between $\lambda$ and $\mu$ one can expect to control certain Sobolev norms by choosing $a,p$ appropriately. However, to preserve the properties (i) and (ii) of \eqref{sMikp}, i.e.\ $\nabla f_\mu \cdot k \equiv 0$ (or in other words: $\Psi_\mu^k k$ being the Euler flow), the underlying function $f_\mu$ cannot depend on the direction $k$. It means that the underlying real function is not compactly supported in $\RR^d$, but at best in $\RR^{d-1}$. Thus at best $n=d-1$, but then
\[
|\nabla^i \Psi_\mu^k  k |_{L^p(\T^d)}  = C \lambda^i \mu^{a+i-\frac{d-1}{p}}.
\]
The quantity $\dashint_\Td  \Psi_\mu^k k \otimes \Psi_\mu^k k$ shall be of order $k \otimes k$, cf.\ \eqref{eq:smb}. Therefore $\dashint  |\Psi_\mu^k|^2$ should be $\lambda$- and $\mu$-independent, cf.\ (iii) of \eqref{sMikp}. This leads to the choice $a=\frac{d-1}{2}$ above and consequently to 
\begin{equation}\label{eq:cmfb}
(v)\qquad \qquad |\nabla^i \Psi_\mu^k  k |_{L^p(\T^d)}  = C \lambda^i \mu^{\frac{d-1}{2}+i-\frac{d-1}{p}}.
\end{equation}
Scaling \eqref{eq:cmfb} would force us to prove our results 
with $d$ substituted by $d-1$, so that e.g.\ $q$ could vary only in the interval $1<q< \frac{2(d-1)}{(d-1)+2}$ (which in particular requires $d \geq 4$). 
%

Summing up, the concentrated Mikado $\Psi_\mu^k  k$ satisfies properties \eqref{sMikp} (i)--(iv) of the original Mikado, but has unsatisfactory scaling (v). 

\subsection{Concentrated localized Mikado flows}
A natural idea to deal with the `loss of dimension' in \eqref{eq:cmfb} is to localise the Ansatz \eqref{eq:mikado-dan-sze}. Let us thus take a smooth radial cutoff function $\phi$ and define $\Phi: \RR^d \to \RR$ via  $\Phi (x) =\phi (|x|)$. We want to retain gains stemming from concentrating, and since now  $\Phi: \RR^d \to \RR$, while $\Psi$ of \eqref{eq:mikado-dan-sze} allowed merely for $d-1$ concentrations, it is better to concentrate in $\Phi$, thus producing $\Phi_\mu$. We periodize this function without renaming it and allow to oscillate at an independent frequency $\lambda_1$. Hence our new Ansatz reads
\begin{equation}
  \Psi_{\lambda_2}^k \Phi_{\mu, \lambda_1} k.
\end{equation}
Let us now state and prove a result gathering needed properties of the cutoff $\Phi_{\mu, \lambda_1}$.
\begin{lem}
\label{l:mikado-fcn-d}
Let $K \subset \Zd$ be a fixed finite set of directions. There exists $\rho>0$ such that for every
$\lambda_1 \in \N, \mu \in \N, \mu \geq \rho^{-1}$
there is $\Phi^k_{\mu, \lambda_1} \in C^\infty(\Td; \RR)$ with the following properties
\begin{equation}\label{eq:cf1}
\dashint_{\Td} (\Phi^k_{\mu, \lambda_1})^2 dx = 1, \qquad 
|\Phi^k_{\mu, \lambda_1}|_{W^{i,r}(\Td)} \leq M_{i,r,k} \lambda_1^i \mu^{i+\frac{d}{2} - \frac{d}{r}},
\end{equation}
\begin{equation}\label{eq:cf2}
\supp \Phi^k _{\mu, \lambda_1}(\, \cdot \, - sk) \cap \supp \Phi^{k'}_{\mu, \lambda_1}(\, \cdot \, - sk') = \emptyset \quad \text{ for all } \quad k, k' \in K, k \neq k', s \in \RR.
\end{equation}
\end{lem}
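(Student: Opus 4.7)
The plan is to obtain $\Phi^k_{\mu,\lambda_1}$ by concentrating a single radial bump in $\RR^d$, translating it to the anchor $\zeta_k$ supplied by Lemma~\ref{l:disj-supp-dmin1}, periodising over $\Zd$, and then oscillating by $\lambda_1$. The role of $\mu$ is pure concentration in all $d$ directions (which, unlike for the classical Mikado profile $\Psi^k$, is unobstructed because $\Phi$ need not be annihilated by $k\cdot\nabla$), while $\lambda_1$ produces fast oscillations. The support condition \eqref{eq:cf2} will then be inherited directly from the disjointness of the periodic tubes produced by Lemma~\ref{l:disj-supp-dmin1}.

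First I would fix, once and for all, $\rho>0$ and $\{\zeta_k\}_{k\in K}$ given by Lemma~\ref{l:disj-supp-dmin1}, shrinking $\rho$ if needed so that $2\rho<1$, and set $T_k:=B_\rho(0)+\{\zeta_k+sk\}_{s\in\RR}+\Zd$ (so $T_k\cap T_{k'}=\emptyset$ for $k\neq k'$). Pick a radial $\tilde\Phi\in C^\infty_c(\RR^d)$ with $\supp\tilde\Phi\subset B_\rho(0)$ and $\int_{\RR^d}\tilde\Phi^2\,dx=1$, and define
\begin{equation*}
\tilde\Phi_{\mu,k}(x):=\mu^{d/2}\tilde\Phi\bigl(\mu(x-\zeta_k)\bigr),\qquad \Phi^k_\mu(x):=\sum_{n\in\Zd}\tilde\Phi_{\mu,k}(x-n),\qquad \Phi^k_{\mu,\lambda_1}(x):=\Phi^k_\mu(\lambda_1 x).
\end{equation*}
Under the hypothesis $\mu\geq\rho^{-1}$, the support of $\tilde\Phi_{\mu,k}$ lies in $B_{\rho/\mu}(\zeta_k)$ of diameter $\leq 2\rho^2<1$, so distinct translates in the periodisation do not overlap. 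Consequently $|\nabla^j\Phi^k_\mu|_{L^r(\Td)}=|\nabla^j\tilde\Phi_{\mu,k}|_{L^r(\RR^d)}$, and a direct change of variables gives $|\nabla^j\tilde\Phi_{\mu,k}|_{L^r(\RR^d)}=\mu^{j+d/2-d/r}|\nabla^j\tilde\Phi|_{L^r(\RR^d)}$. Since $\Phi^k_\mu$ is $\Zd$-periodic and $\lambda_1\in\N$, composition with $\lambda_1\,\cdot$ preserves $L^r(\Td)$ norms and produces one factor $\lambda_1$ per derivative, so
\begin{equation*}
|\nabla^j\Phi^k_{\mu,\lambda_1}|_{L^r(\Td)}\leq C_{j,r}\,\lambda_1^j\,\mu^{j+d/2-d/r}.
\end{equation*}
The case $j=0$, $r=2$ gives $\dashint_{\Td}(\Phi^k_{\mu,\lambda_1})^2\,dx=1$, and summing $j\leq i$ (using $\mu,\lambda_1\geq 1$ so that the top-order term dominates) yields \eqref{eq:cf1}.

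It remains to verify \eqref{eq:cf2}. From the construction, for every $s\in\RR$,
\begin{equation*}
\supp\Phi^k_{\mu,\lambda_1}(\,\cdot\,-sk)\subset \lambda_1^{-1}\bigl(B_{\rho/\mu}(\zeta_k)+\lambda_1 sk+\Zd\bigr)\subset \lambda_1^{-1}\bigl(B_\rho(0)+\{\zeta_k+tk\}_{t\in\RR}+\Zd\bigr)=\lambda_1^{-1}T_k,
\end{equation*}
using $B_{\rho/\mu}\subset B_\rho$ for $\mu\geq 1$ and $\lambda_1 sk\in\RR k$. The analogous inclusion for $k'$ places the other support in $\lambda_1^{-1}T_{k'}$, and since $T_k\cap T_{k'}=\emptyset$ while dilation preserves disjointness, \eqref{eq:cf2} follows. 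The only delicate point in the whole argument is coordinating the three scales: $\rho$ is selected by Lemma~\ref{l:disj-supp-dmin1} \emph{first}, then $\mu\geq\rho^{-1}$ is imposed so that the periodisation of the concentrated bump remains well-separated inside a fundamental domain, and only then is $\lambda_1$ let free; getting this ordering right is what lets us combine full $d$-dimensional concentration with the tube-disjointness mechanism of \eqref{eq:disj-supp-d1}.
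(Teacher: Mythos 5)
Your proof is correct and follows essentially the same route as the paper: concentrate a fixed bump of mass one, anchor it at the $\zeta_k$ from Lemma~\ref{l:disj-supp-dmin1}, periodise, and oscillate by $\lambda_1$, with \eqref{eq:cf1} from scaling and \eqref{eq:cf2} from the tube disjointness \eqref{eq:disj-supp-d1}. The only (harmless, arguably cleaner) difference is that you translate to $\zeta_k$ before applying the $\lambda_1$-dilation, which makes the inclusion of the supports into $\lambda_1^{-1}T_k$ immediate.
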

\begin{proof}
Take $\Phi \in C^\infty_c(\RR^{d})$, with ${\rm supp}\, \Phi \subseteq B_1(0) \subseteq \RR^{d}$ such that $\int_{\RR^{d}} \Phi^2 = 1$. Let us concentrate $\Phi$ to 
$\Phi_\mu : \Td \to \RR$, hence 
$\dashint_{\Td} (\Phi_{\mu})^2 = \mu^{a - \frac{d}{2}}$. Choosing $a= \frac{d}{2}$ yields the desired $\dashint_{\Td} \Phi_{\mu}^2 dx = 1$ and $|\Phi_{\mu}|_{W^{i,r}(\Td)} \leq C_{i,r}  \mu^{i+\frac{d}{2} - \frac{d}{r}}$. 
Concentration gives also $\supp \Phi _{\mu} \subseteq B_{1/\mu}(0) + \Z^d$.

Let $\rho>0$ and $\{\zeta_k\}_{k \in K}$  be given by Lemma \ref{l:disj-supp-dmin1}.
We now set for every $k \in K$ and $\lambda_1 \in \N $,
\[
\Phi_{\mu, \lambda_1}^k (x) := \Phi_\mu ( \lambda_1( x- \zeta_k)). 
\]
Property \eqref{eq:cf1} now follows from the scaling properties of $\Phi_\mu$, whereas  \eqref{eq:cf2} from $\supp \Phi _{\mu} \subseteq B_{1/\mu}(0) + \Z^d$, \eqref{eq:disj-supp-d1}, and assumed $\mu \geq \rho^{-1}$
.
%
%
%
%
%
%
\end{proof}

\subsection{Concentrated localized traveling Mikado flow}
Unsurprisingly, introducing $d$-dimensional cutoff $\Phi$ destroys the properties (i) - (iii) of standard Mikados. The most severe loss, due to its critical scaling, is not having (ii) anymore. A crucial idea how to handle this issue, introduced in \cite{BV19}, is to let the cutoff function $\Phi$ travel in time along $l^k$ with speed $\omega$. This leads to a corrector term $Y^k$ (see below), whose time derivative compensates lack of (ii). At the same time $Y^k$ is of order $\frac{1}{\omega}$, so it can be controlled by choosing $\omega$ large.

The concentrated localized traveling Mikado flow is our final Ansatz. It will be denoted by $W^k$, but it is important to bear in mind that it is determined by  the parameters
\begin{equation*}
\mu, \lambda_1, \lambda_2, \omega \in \N.
\end{equation*}
The next proposition concerns our final Mikado flows $W^k$ and Mikado correctors $Y^k$.
\begin{prop}\label{prop:cltmik}
Let $K \subset \Zd$ be a fixed finite set of directions. Let $\Psi_{\lambda_2}^k$ be the function used to produce the standard Mikado \eqref{eq:mikado-dan-sze} with its properties (i) -- (iv). Let $\Phi_{\mu, \lambda_1}$ be the localisation provided by Lemma \ref{l:mikado-fcn-d}.

Define the functions $W^k: \Td \times [0,1]  \to \Rd$, $Y^k: \Td \times [0,1]  \to \Rd$ by
\begin{equation}
\label{eq:def-mikado-d}
W^k (x,t):= \Big( \Psi_{\lambda_2}^k \Phi^k_{\mu, \lambda_1} k \Big) (x -\omega t k), \qquad 
Y^k (x,t):=  \Big( \frac{1}{\omega}(\Psi^k_{\lambda_2})^2 (\Phi_{\mu, \lambda_1}^k)^2 k \Big) (x - \omega t k).
\end{equation}

There exists $\rho>0$ such that for every $\mu, \lambda_1, \lambda_2, \omega \in \N$ satisfying
\begin{equation}
\label{eq:general-assumpt-mikado}
\mu \geq \frac{1}{\rho}, \quad \frac{\lambda_2}{\lambda_1} \in \N \quad \text{ and } \quad
\frac{\lambda_1 \mu}{\lambda_2} < \frac{1}{2}
\end{equation}
the functions $W^k, Y^k$ are spatially $\lambda_1$-periodic are 
have the following properties:
\begin{equation}\label{eq:fdc}
(v') \qquad |W^k(t)|_{W^{i,r}(\Td)}   \le M_{i,r}   {\lambda_2}^i  \mu^{\frac{d}{2} - \frac{d}{r}}, \qquad
|Y^k(t)|_{W^{i,r}(\Td)}   \le M_{i,r}  \frac{ {\lambda_2}^i  \mu^{d - \frac{d}{r}}}{\omega};
\end{equation}
\begin{equation}
\label{eq:mean-value-mikado}
(iii')  \qquad  \Big| \dashint W^k(t) \otimes W^k(t) - k \otimes k \Big| \leq M_1 \frac{\lambda_1 \mu}{{\lambda_2}};
\end{equation}
\begin{equation*}
(iv') \qquad \text{ for } k, k' \in K, \; k \neq k' {\rm supp}\, W^k \cap {\rm supp}\, W^{k'}= \emptyset;
\end{equation*}
\begin{equation}\label{eq:fdmcut:5}
(ii') \qquad \partial_t Y^k + \divv (W^k \otimes W^k) = 0.
\end{equation}
\end{prop}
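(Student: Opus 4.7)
The plan is to verify (v'), (iii'), (iv'), (ii') in turn, with two structural observations doing the heavy lifting. First, the original Mikado condition $\nabla\Psi^k\cdot k=0$ means $\Psi^k_{\lambda_2}$ is constant along $k$, hence $\Psi^k_{\lambda_2}(x-\omega t k)=\Psi^k_{\lambda_2}(x)$, and so all time dependence in $W^k$ and $Y^k$ is carried solely by $\Phi^k_{\mu,\lambda_1}$. Second, $\Psi^k_{\lambda_2}$ has spatial period $1/\lambda_2$ while $\Phi^k_{\mu,\lambda_1}$ has spatial period $1/\lambda_1$, so $\lambda_2/\lambda_1\in\N$ yields the stated spatial $\lambda_1$-periodicity.

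For (iv'), the support of $\Phi^k_{\mu,\lambda_1}(\cdot-\omega t k)$ lives, after rescaling by $\lambda_1$ and using $\mu\ge\rho^{-1}$, inside the tube $\{\zeta_k+sk\}_{s\in\RR}+B_\rho(0)+\Z^d$ of Lemma~\ref{l:disj-supp-dmin1}, uniformly in $t$; disjointness across $k\in K$ then follows. For (v') I would Leibniz-expand the product $\Psi^k_{\lambda_2}\Phi^k_{\mu,\lambda_1}$ (respectively its square), using that $\Psi^k_{\lambda_2}$ is uniformly $L^\infty$-bounded with $i$-th derivatives of size $\lambda_2^i$, while the $W^{i,r}$-bound on $\Phi^k_{\mu,\lambda_1}$ is delivered by Lemma~\ref{l:mikado-fcn-d}; the assumption $\lambda_1\mu/\lambda_2<1/2$ geometrically dominates the Leibniz sum by its top $\Psi$-derivative term, giving the advertised $\lambda_2^i\mu^{d/2-d/r}$ estimate and, after the $1/\omega$ prefactor, the analogous bound for $Y^k$.

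Property (iii') reduces to estimating $\dashint(\Psi^k_{\lambda_2})^2(\Phi^k_{\mu,\lambda_1})^2-1$, since $W^k\otimes W^k=(\Psi^k_{\lambda_2})^2(\Phi^k_{\mu,\lambda_1})^2\,k\otimes k$. I would split $(\Psi^k)^2=1+\tilde\Psi$ with $\tilde\Psi$ null-mean and apply Proposition~\ref{prop:mosc} at scale $\lambda=\lambda_2$ with exponent $r=1$, bounding the remainder by $\lambda_2^{-1}|\nabla(\Phi^k_{\mu,\lambda_1})^2|_{L^1}|\tilde\Psi|_\infty$; Cauchy--Schwarz on $\nabla(\Phi^k)^2=2\Phi^k\nabla\Phi^k$ together with \eqref{eq:cf1} yields $|\nabla(\Phi^k_{\mu,\lambda_1})^2|_{L^1}\lesssim \lambda_1\mu$, giving the claimed error of order $\lambda_1\mu/\lambda_2$. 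The main obstacle here is picking the right duality exponent: an $L^\infty$ bound on $\nabla(\Phi^k_{\mu,\lambda_1})^2$ would instead cost a useless $\mu^{d+1}$ factor, so the $(L^1,L^\infty)$ pairing against the concentrated $\Phi^k$ is essential.

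Finally, (ii') is a direct computation based on $\partial_t f(x-\omega t k)=-\omega(k\cdot\nabla)f(x-\omega t k)$. One obtains
\[
\partial_t Y^k = -(\Psi^k_{\lambda_2})^2\,(k\cdot\nabla)(\Phi^k_{\mu,\lambda_1})^2(\cdot-\omega tk)\,k,
\]
while $\divv(W^k\otimes W^k)=(k\cdot\nabla)\bigl[(\Psi^k_{\lambda_2})^2(\Phi^k_{\mu,\lambda_1})^2(\cdot-\omega tk)\bigr]\,k$. The Mikado constraint $(k\cdot\nabla)\Psi^k_{\lambda_2}=0$ kills the $\Psi$-differentiated term in the second expression, leaving an exact cancellation with the first.
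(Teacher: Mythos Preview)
Your proposal is correct and follows essentially the same route as the paper: Leibniz expansion with $\lambda_1\mu<\lambda_2$ for (v'), Proposition~\ref{prop:mosc} at scale $\lambda_2$ paired with the $L^1$ bound on $\nabla(\Phi^k_{\mu,\lambda_1})^2$ for (iii'), the disjoint-tubes property \eqref{eq:cf2} for (iv'), and a direct cancellation for (ii'). One minor difference worth noting: for (ii') the paper writes $W^k=F(x-\omega t k)k$, $Y^k=\omega^{-1}G(x-\omega t k)k$ with $G=F^2$, so the identity $\partial_t Y^k+\divv(W^k\otimes W^k)=0$ drops out without appealing to $(k\cdot\nabla)\Psi^k_{\lambda_2}=0$; your version splits off the $\Psi$-dependence first and then invokes that constraint, which is equally valid but slightly less streamlined.
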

\begin{proof}
The spatial $\lambda_1$-periodicity of $W^k, Y^k$ follows from the assumption ${\lambda_2}/\lambda_1 \in \N$.
Since $W^k, Y^k$ are obtained from stationary functions by means of a Galilean shift, for \eqref{eq:fdc} and \eqref{eq:mean-value-mikado} it suffices to estimate the respective stationary functions.
\begin{equation}\label{eq:msii}
|\Psi_{\lambda_2}^k \Phi^k_{\mu, \lambda_1} k |_{W^{i,r}(\T^d)}
\leq M_k \sum_{j=0}^i  |\Psi_{\lambda_2}|_{W^{i-j,\infty}(\Td)} |\Phi_{\mu,\lambda_1}|_{W^{j,r}(\Td)} \le  M_{i,r,k} \sum_{j=0}^i \lambda_1^j \mu^{j + \frac{d}{2} - \frac{d}{r}} {\lambda_2}^{i-j}
\end{equation}
with the second inequality due to \eqref{eq:mikado-dan-sze} and \eqref{eq:cf1}. Since by assumption $\lambda_1 \mu < {\lambda_2}$, we obtain \eqref{eq:fdc} estimate for $W^k$.
A similar computation yields the estimate for $Y^k$. For  \eqref{eq:mean-value-mikado} we compute

\[
\Big| \dashint (\Psi^k_{\lambda_2})^2 (\Phi_{\mu, \lambda_1}^k)^2(k \otimes k) - (k \otimes k) \Big| = \Big|  (k \otimes k) \dashint (\Phi_{\mu, \lambda_1}^k)^2 \left( (\Psi^k_{\lambda_2})^2 - 1 \right) \Big|  =:I
 \]
 with the equality valid because the normalisation of \eqref{eq:cf1} holds. Since the normalisation (iii) of the standard Mikado $\Psi^k$ implies that $(\Psi^k_{\lambda_2})^2 - 1$ is null-mean, and it oscillates at the frequency $\lambda_2$, by Proposition \ref{prop:mosc} we have
\[
I  \le  \frac{M_k}{{\lambda_2}} \left|\nabla \left( (\Phi_{\mu, \lambda_1}^k)^2 \right) \right|_{1} |(\Psi^k)^2-1|_{\infty} \le M_k \frac{\lambda_1 \mu}{{\lambda_2}}
\]
via \eqref{eq:cf1}. We reached \eqref{eq:mean-value-mikado}.

Disjointness of supports of follows from
\eqref{eq:cf2}: Assume that for some $(x,t) \in \Td \times [0,1]$ and $k,k' \in K$, $k \neq k'$, $W^k(x,t) W^{k'}(x,t) \neq 0$.
Hence in view of the definition \eqref{eq:def-mikado-d}
\begin{equation*}
x - \omega t k \in {\rm supp}\, \Phi_{\mu, \lambda_1}^k, \quad x - \omega t k' \in {\rm supp}\, \Phi_{\mu, \lambda_1}^{k'},
\end{equation*}
thus contradicting \eqref{eq:cf2}.

The Mikado functions have the form 
\begin{equation*}
W^k(x,t) = F(x - \omega t k) k, \quad Y^k (x,t) = \frac{1}{\omega} G(x - \omega t k) k,
\end{equation*}
with $F^2 = G$. Therefore
$\divv (W^k \otimes W^k) =  \big( \nabla G(x - \omega t k) \cdot k \big) k$,
whereas $\partial_t Y^k= - \big(\nabla G(x - \omega t k) \cdot k \big) k$. Hence \eqref{eq:fdmcut:5}.
\end{proof}

\begin{remark}
\label{rmk:comparison-mikado}
Let us compare our $W^k$ 
with the concentrated Mikado.  
\begin{enumerate}[(a)]
\item $W^k$ is not divergence free, i.e.\ (i) does not hold. Furthermore $W^k$ is now time dependent.
\item  $W^k$ does not satisfy (ii). There appears Mikado corrector $Y^k$ to compensates this deficiency, see \eqref{eq:fdmcut:5}. This means however that the new term $Y^k$ must be appropriately estimated.
\item Property (iii) holds approximately, see \eqref{eq:mean-value-mikado}.
\item Supports are pairwise disjoint (now in space-time).
\item The scaling with a dimension loss  \eqref{eq:cmfb} is now improved to \eqref{eq:fdc}, which is our main gain.
\end{enumerate}

Proposition \ref{prop:cltmik} yields the following estimates in relation to (b), (c), (e):
\[
\begin{aligned}
&\left| Y^k(t) \right|_{L^2(\Td)} \sim \frac{\mu^{d/2}}{\omega}, \quad
&\left| \nabla Y^k(t) \right|_{L^q(\Td)} \sim \lambda_2 \frac{\mu^{d-\frac{d}{q}}}{\omega}, \\
&\Big| \dashint W^k(t) \otimes W^k(t) - k \otimes k \Big| \sim \frac{\lambda_1 \mu}{{\lambda_2}}, \quad &|\nabla W^k(t)|_{L^q(\Td)} \sim {\lambda_2}  \mu^{\frac{d}{2} - \frac{d}{q}}.
 \end{aligned}
\]
In order to deal with (a), let us recall the heuristics of an ideal antidivergence operator $\idivn_\infty$ of Remark \ref{rem:ia}. A short computation involving \eqref{eq:def-mikado-d}, \eqref{eq:fdc}, and \eqref{eq:cf1} yields
\[
\left| \idivn_\infty (\divv W^k) \right|_{L^2(\Td)}  \sim \frac{\lambda_1 \mu}{{\lambda_2}}, \qquad \left| \idivn_\infty \partial_t W^k \right|_{L^1(\Td)}  \sim \frac{\lambda_1 \mu}{{\lambda_2}} \cdot \frac{\omega}{\mu^{d/2}}
\]
Therefore, seeking smallness of
\begin{equation}\label{eq:comparison-mikado}
 \frac{\mu^{d/2}}{\omega}, 
\quad \frac{\lambda_1 \mu}{{\lambda_2}},   
 \quad  {\lambda_2}  \mu^{\frac{d}{2} - \frac{d}{q}},  \quad  \frac{\lambda_1 \mu}{{\lambda_2}} \cdot \frac{\omega}{\mu^{d/2}}
\end{equation}
will motivate the choice of the relations between the parameters $\mu, \lambda_1, \lambda_2, \omega$ in Section \ref{sec:proppf}. 

Notice that we did not add the term $ \lambda_2 \frac{\mu^{d-\frac{d}{q}}}{\omega}$ to the list \eqref{eq:comparison-mikado}, as  it is the product of the first and the third term in \eqref{eq:comparison-mikado}, and thus its smallness is implied by smallness of these terms.
\end{remark}

\section{Definition of $(u_1, \pi_1, R_1)$}\label{sec:step}
Let $(u_0, \pi_0, R_0)$ be a solution to the Non-Newtonian-Reynolds system \eqref{eq:pR}, and $\delta, \eta \in (0,1]$ as in Proposition \ref{prop:main}. 
%
%
We define
\[u_1 := u_0 + \vp + \vc, \quad \pi_1 := \pi_0 + \pi_p,\] where 
\begin{itemize}
\item $\vp$ is a perturbation based on the Mikado flows of Section \ref{sec:mik}, aimed at decreasing $R_0$, 
\item $\vc$ is a corrector restoring solenoidality of $u_1$ and compensating for our Mikado flows not solving Euler equations.
\end{itemize}
 Since  $(u_0, \pi_0, R_0)$ solves  \eqref{eq:pR}, it holds in the sense of distributions
\begin{equation}\label{eq:rey1}
\begin{aligned}
\pat u_1 +  \divv (u_1\otimes u_1)  - \divv  \A (\Do)+&\nabla \pi_1 = \\
& \pat (\vp + \vc) + \divv (u_0 \otimes \vp + \vp \otimes u_0) \\
+&\, \divv (u_0 \otimes \vc + \vc \otimes u_0 + \vp \otimes \vc + \vc \otimes \vp + \vc \otimes \vc) \\
+&\, \divv (\vp \otimes \vp - \cR_0) \\
-&\, \divv \left(\A \big(D (u_0 + \vp + \vc) \big) - \A (D u_{0}) \right) + \nabla \pi_p.
\end{aligned}
\end{equation}
\subsection{Decomposition of  $R_0$ and energy control}
In general, $R_0$ is only continuous (recall Definition \ref{def:nnr} and Remark \ref{rem:nonsm}). Since it is convenient to work with smooth objects, we regularize  $R_0$ (extended for times outside $[0,1]$ by $R_0(x,0)$ and $R_0(x,1)$, respectively) with the standard mollifier $\phi_\epsilon$ in space and time. Thus
\begin{equation}
\label{eq:regularization-r0}
\cR_0^\epsilon	 := \cR_0 * \phi_\epsilon.
\end{equation}
%
Now $R_0^\epsilon$ is smooth and \eqref{eq:Rdelta} implies
\begin{equation}
\label{eq:Rdelta-epsilon}
|\cR_0^\epsilon(t)|_1 \leq \frac{\delta}{2^7d}, \quad \text{ for every $t \in [0,1]$}.
\end{equation}
Next, we decompose $\cR_0^\epsilon$ into basic directions. 
In order to stay within $\cS_+$ of Lemma \ref{lem:NasUFS}, we shift and normalise $\cR_0^\epsilon$ via
\[
\Id + \frac{\cR_0^\epsilon (x,t)}{\varrho(x,t)}, \qquad \text{with}
\]
\begin{equation}\label{eq:motiv_rho}
\varrho(x,t) := 2 \sqrt{\epsilon^2 + |\cR_0^\epsilon (x,t)|^2} + \gamma_0(t),
\end{equation}
\begin{equation}\label{eq:gnot}
\gamma_0(t) :=\frac{e(t) (1-\frac{\delta}{2}) -  \left(  \int_\Td  |u_0|^2 (t)  + 2 \int_0^t \int_\Td  \A (D u_0) D u_0 \right)}{d}.
\end{equation}
The role of $\sqrt{\epsilon^2 + ..}$ is to avoid the degeneracy $|\cR_0(x,t)| =0$, whereas the role of $\gamma_0$ is to pump energy into the system, thus facilitating the step \eqref{eq:ass_e} $\to$  \eqref{eq:e_contr}. Observe that 
$\gamma_0>0$ because of \eqref{eq:ass_e}. 
The choice \eqref{eq:motiv_rho} yields in particular  $\varrho \ge 2|\cR^\epsilon_0(x,t)|$ and hence 
\begin{equation}\label{eq:unifN}
\frac12 \Id \le \Id + \frac{\cR_0^\epsilon}{\varrho} \le \frac32 \Id.
\end{equation}

\begin{remark}
\label{rmk:directions-fixed}
The set $\cN \subset \cS_+$ of Lemma \ref{lem:NasUFS} is fixed by \eqref{eq:unifN} uniformly over the convex integration iterations. 
\end{remark}
Define
\begin{equation}
\label{eq:def-ak}
a_k(x,t) := \varrho^\frac12 (x, t)  \Gamma_k \Big(\Id + \frac{\cR_0^\epsilon (x,t)}{\varrho(x,t)}\Big).
\end{equation}
Thanks to Lemma \ref{lem:NasUFS} it holds 
\begin{equation}\label{eq:Rp2m}
 \varrho \Id  +  \cR_0^\epsilon =  \sum_{k \in K} \varrho \Gamma^2_k \Big(\Id + \frac{\cR_0^\epsilon}{\varrho}\Big) k \otimes k  = \sum_{k \in K} a_k^2 \, k \otimes k.
\end{equation}
\subsection{Choice of $u_p$} Now we choose the principal corrector, motivated by the corrector $\tilde u$ that appeared in the initial part of Section \ref{sec:mik}. Let $W^k$ be the  Mikado flow of Proposition \ref{prop:cltmik} with $a_k$ defined by \eqref{eq:def-ak}. Let
\begin{equation}\label{eq:cvp}
\vp (x,t) : = \sum_{k \in K} a_k (x,t)  W^k (x,t).
\end{equation}
The disjoint supports of $W^k (t),  W^{k'} (t)$, $k \neq k'$
imply
\begin{equation}\label{eq:Rp1}
\vp \otimes \vp= \sum_{k \in K} a^2_k W^k \otimes  W^k.
\end{equation}
Recall the notation $\Po f:= f - \dashint_\Td f$. Use \eqref{eq:Rp2m} and \eqref{eq:Rp1} to write
\begin{equation}\label{eq:Rp3}
\begin{aligned}
\vp  \otimes \vp - \cR_0^\epsilon  &=\varrho  \Id   + \sum_{k \in K} a^2_k \left( W^k  \otimes W^k  - k \otimes k \right) \\
& = \varrho \Id + \sum_{k \in K} a_k^2 P_{\neq 0} (W^k \otimes W^k )  + a_k^2 \Big( \dashint W^k \otimes W^k - k \otimes k \Big)
\end{aligned}
\end{equation}
We therefore have
\begin{equation}\label{eq:divdl}
\begin{aligned}
\divv (\vp \otimes \vp - \cR^\epsilon_0) 
 =& \nabla \varrho + \sum_{k \in K} P_{\neq 0} (W^k \otimes W^k ) \nabla a_k^2 \\ & + \sum_{k \in K} \Big( \dashint W^k \otimes W^k - k \otimes k \Big) \nabla a_k^2  + \sum_{k \in K} a_k^2 \divv ( W^k \otimes W^k).
 \end{aligned}
\end{equation}
\begin{remark}
Observe that for the original Mikados, or their concentrated version, the second line of \eqref{eq:divdl} vanishes. These additional terms will be taken care of by \eqref{eq:mean-value-mikado} and 
\eqref{eq:fdmcut:5}.
\end{remark}
In order to avoid troublesome solenoidality correctors of the last $\Sigma$ in \eqref{eq:divdl}, let us (Helmholtz) project it onto divergence free vectors by $P_H = Id - \nabla \Delta^{-1} \divv$ and balance the identity by incorporating $\nabla \Delta^{-1} \divv$ into the pressure, with the new pressure \[\tilde \varrho:= \varrho + \Delta^{-1} \divv \sum_{k \in K}   a^2_k \, \divv \left( W^k  \otimes W^k \right).\]
Applying $\Po$ to both sides of the resulting identity, we arrive at
\begin{equation}
\label{eq:Rp3d}
\begin{aligned}
\divv (\vp \otimes \vp - \cR^\epsilon_0) 
 =& \nabla \tilde \varrho + \Po \sum_{k \in K} P_{\neq 0} (W^k \otimes W^k ) \nabla a_k^2 \\  + \,&\Po \sum_{k \in K} \Big( \dashint W^k \otimes W^k - k \otimes k \Big) \nabla a_k^2  + \Po P_H \sum_{k \in K} a_k^2 \divv ( W^k \otimes W^k).
 \end{aligned}
\end{equation}

\subsection{Choice of $u_c$}
The corrector term $\vc$ has the following roles: (i) to cancel the highest-order bad term $`\divv( W^k \otimes W^k)$' of \eqref{eq:Rp3d} via \eqref{eq:fdmcut:5}, (ii) to render the entire perturbation $u_p + u_c$ solenoidal and (iii) null-mean. 

For (i), observe that \eqref{eq:fdmcut:5} implies 
\begin{equation}\label{eq:twcancel}
\Po P_H \sum_{k \in K}   a^2_k \partial_t Y^k+ \Po P_H  \sum_{k \in K}   a^2_k \divv (W^k \otimes W^k) = 0.
\end{equation}
Thus taking
\begin{equation}\label{eq:uc1def}
u_c^I := \Po P_H \sum_{k \in K}   a^2_k Y^k
\end{equation}
will allow to cancel, with a part of the time derivative of $u_c^I$, the bad term $`\divv( W^k \otimes W^k)$' of \eqref{eq:Rp3d}.

For (ii), observe that thanks to $P_H$, $u_c^I$ is already solenoidal. Therefore it suffices to compensate lack of solenoidality of $\vp$. We will now define $u_c^{II}$ accordingly. 
By the definition \eqref{eq:cvp} of $\vp$, the definition \eqref{eq:def-mikado-d} of $W^k$, and since $\divv\Psi_{\lambda_2}^k k =0$ (cf.\ the property (i) of \eqref{sMikp}) we have
\[
\divv  \vp (x,t) = \sum_{k \in K} \divv \left( a_k (x,t)  \left( \Psi_{\lambda_2}^k \Phi^k_{\mu, \lambda_1} k \right) (x-\omega t k) \right) = 
 \sum_{k \in K}  \Psi^k_{\lambda_2} (x) \; k \cdot \nabla \big( a_k\Phi_{\mu, \lambda_1}^k(x - \omega t k) \big).
\]
Therefore we define
\begin{equation}\label{eq:precvc}
u_c^{II}(t, \cdot) := - \divv  \sum_{k \in K}  \idivn^2_N \Big( k \cdot \nabla \left( a_k (t) \Phi^k_{\mu, \lambda_1}(\, \cdot \, - \omega t k  ) \right), \; \Psi^k_{\lambda_2} \Big),
\end{equation}
where $\idivn^2_N$ is the double antidivergence given by Proposition \ref{prop:invdiv}, and $N$ will be fixed later (see the discussion at the beginning of Section \ref{sec:est}). 

Since $\divv \divv \idivn^2_N = \Id\, \Po$, $\divv u_c^{II} + \divv \vp=0$. 

As $u_c^I, u_c^{II}$ are null-mean, to take into account the condition (iii), it suffices to define
\begin{equation}\label{eq:cvc}
\vc :=  (u_c^I + u_c^{II}) - \dashint u_p.
\end{equation}

\subsection{Reynolds stresses}
Let us distribute $\pat(u_c^I + u_c^{II})$ and $\cR_0^\epsilon$ in \eqref{eq:rey1} as follows
\begin{equation}\label{eq:rey2}
\begin{aligned}
\pat u_1 + \divv (u_1\otimes u_1)   -  \divv  \A (\Do) + \nabla \pi_1&= \pat  \left(\Po \vp + u_c^{II} \right) + \divv (u_0 \otimes \vp + \vp \otimes u_0) \\
&+ \divv (u_0 \otimes \vc + \vc \otimes u_0 + \vp \otimes \vc + \vc \otimes \vp + \vc \otimes \vc) \\
&+ \pat u_c^{I} + \divv (\vp \otimes \vp - \cR_0^\epsilon) \\  
&+ \divv( \cR_0^\epsilon - \cR_0) \\
&- \divv \left(\A \big(D (u_0 + \vp + \vc) \big) - \A (D u_{0}) \right) - \nabla \pi_p  
\end{aligned}
\end{equation}
We rewrite the r.h.s.\ of \eqref{eq:rey2} further, recasting it into a divergence form.
\vskip 1mm
(i. First line of r.h.s.\ of \eqref{eq:rey2}) The definition \eqref{eq:cvp} of $\vp$, the definition \eqref{eq:def-mikado-d} of $W^k$, and $0=
k \cdot \nabla \Psi_{\lambda_2}^k (x) $ via property (i) of \eqref{sMikp}, give together
\begin{equation}\label{eq:defRlina}
\pat  \vp (t)= \sum_{k \in K} \Psi^k_{\lambda_2}k  \; \pat \left( a_k (\cdot, t) \Phi^k_{\mu, \lambda_1} (\cdot - \omega k t) \right).
\end{equation}
Using the above formula and the definition \eqref{eq:precvc} of $u_c^{II}$, we define the antidivergence of the first line of r.h.s.\ of \eqref{eq:rey2} 
%
\begin{equation}\label{eq:defRlin}
\begin{aligned}
R_{lin} &:= \sum_{k \in K}  \idivn_N \left(\pat \left( a_k (\cdot, t) \Phi^k_{\mu, \lambda_1} (\cdot - \omega k t) \right),  \Psi^k_{\lambda_2}k  \right) - \idivn^2_N \left( k\cdot \nabla \pat \left( a_k (\cdot,t)  \Phi^k_{\mu, \lambda_1} (\cdot - \omega k t \right) , \Psi^k_{\lambda_2}  \right) \\
&+ (u_0 \otimes \vp + \vp \otimes u_0).
\end{aligned}
\end{equation}
\vskip 1mm
(ii. Second line of r.h.s.\ of \eqref{eq:rey2})
\begin{equation}\label{eq:defRcorr}
R_{corr} := u_0 \otimes \vc + \vc \otimes u_0 + \vp \otimes \vc + \vc \otimes \vp + \vc \otimes \vc.
\end{equation}
\vskip 1mm
(iii. Third line of r.h.s.\ of \eqref{eq:rey2})
Here we use an important idea of \cite{BV19}. Via the definition \eqref{eq:uc1def} of $u_c^I$ and the property \eqref{eq:twcancel} we have
\[
\pat u_c^{I} =- \Po P_H  \sum_{k \in K}   a^2_k \, \divv \left( W^k  \otimes W^k \right) + 
\Po P_H \sum_{k \in K}  (\pat a^2_k) Y^k.
\]
Adding the above identity to \eqref{eq:Rp3d} that expresses $\divv( \vp \otimes \vp - \cR^\epsilon_0)$,  the $`\divv( W^k \otimes W^k)$' term cancel out and one has
\[
\begin{aligned}
\pat u_c^{I}  + \divv (\vp \otimes \vp - \cR_0^\epsilon) 
- \nabla \tilde \varrho   =&\sum_{k \in K} \left(W^k \otimes W^k - k \otimes k \right)  \nabla a^2_k  +   \Po P_H \sum_{k \in K}  (\pat a^2_k) Y^k.
\end{aligned}
\]
Let us thus define, leaving out $\nabla \tilde \varrho$ since it will be accounted for by the pressure perturbation $q_p$,
\begin{equation}\label{eq:defRq}
 R_{quadr}  :=   \sum_{k \in K} \tilde \idivn_1  \left( \nabla a_k^2,  \Po \left( W^k \otimes W^k \right) \right) + a_k^2 \Big( \dashint  W^k \otimes W^k \!- k \otimes k \Big) + \divn  \Po P_H \left( (\pat a^2_k) Y^k\right),
\end{equation}
so that $\divv R_{quadr} =  \partial_t u_c^I + \divv (u_p \otimes u_p - \cR_0^\epsilon) - \nabla \tilde \varrho.$
\vskip 1mm
(iv. Fourth line on r.h.s. \ of \eqref{eq:rey2}) Let
\begin{equation}
\label{eq:defRm}
R_{moll} := \cR_0^\epsilon - \cR_0.
\end{equation}

(v. Last line of r.h.s.\ of \eqref{eq:rey2})
Let
\begin{equation}\label{eq:defRA}
R_{\A} := - \left(\A \big(D (u_0 + \vp + \vc) \big) - \A (D u_{0}) \right).
\end{equation}
\subsection{Pressure}\label{ssec:pres}
In order to balance for  $ \nabla \tilde \varrho $ and ensure null-trace of $\cR_1$, we choose  $\pi_p$ such that
\[
\nabla \pi_p =  \nabla \tilde \varrho   + \divv\frac{1}{d} tr (R_{lin} + R_{corr} +  R_{quadr} + R_{\A}) \Id, \qquad \text{i.e.}
\]
\[
\pi_p (x,t) + c (t) := \tilde \varrho +\frac{1}{d} tr (R_{lin} + R_{corr} +  R_{quadr} + R_{\A}),
\]
having freedom in choosing $c(t)$, we set it so that $\pi_p (x,t)$ is null-mean. 
\subsection{Conclusion} 
Comparing our choices \eqref{eq:defRlin}, \eqref{eq:defRcorr}, \eqref{eq:defRq}, \eqref{eq:defRm} and \eqref{eq:defRA} for r.h.s.\ of \eqref{eq:rey2} with its l.h.s.\ we have reached
\begin{equation}\label{eq:nNR1}
\pat u_1 + \divv (u_1\otimes u_1)  - \divv  \A (\Do)+\nabla \pi_1 =  -\divv \cR_1
\end{equation}
with
\begin{equation}\label{eq:Rsplit}
R_1 :=- (R_{lin} +  R_{corr} +   R_{quadr} + R_{moll} + R_{\A}).
\end{equation}
Notice that tensor $R_1$ is symmetric, since its components are symmetric (cf.\ respective definitions and Proposition \ref{prop:invdiv}).

\section{Estimates}\label{sec:est}
We continue the proof of the main Proposition \ref{prop:main}. 
In the previous section, given $(u_0, \pi_0, R_0)$ solving the non-Newtonian-Reynolds system, we defined $u_1 = u_0 + u_p + u_c$, $\pi_1$, and the new Reynolds stress $R_1$, required by Proposition \ref{prop:main}. 
The perturbation $u_p$, the corrector $u_c$ and the error $R_1$ depend on the six parameters 
\begin{equation}
\label{eq:parameters}
 \epsilon >0, \quad \mu, \lambda_1, \lambda_2, \omega \in \N, \quad N \in \NN,
\end{equation}
which satisfy the condition \eqref{eq:general-assumpt-mikado}. The mollification parameter $\epsilon$ helps to avoid degeneracies or singularities of $\A$.  Let us immediately fix it so that
\begin{equation}
\label{eq:def-epsilon}
\epsilon \leq \frac{\delta}{2^7 d}, \qquad  |\cR_0^\epsilon(t) - \cR_0(t)|_1 \leq \frac{\eta}{2} \quad \text{ for every $t \in [0,1]$},
\end{equation}
where $\eta>0$ is the parameter appearing in the statement of the main Proposition \ref{prop:main}.

In this section we estimate $u_p, u_c, R_1$ and the energy gap of the new solution $u_1$ in terms of the remaining five parameters  $\mu, \lambda_1, \lambda_2, \omega, N$. They will be appropriately chosen in Section \ref{sec:proppf} so that \eqref{eq:mp_up} -- \eqref{eq:e_contr} hold, thus concluding the proof of the main Proposition \ref{prop:main}.

\begin{remark}[Silent assumptions]
For results of this section, we assume without writing it explicitly at each occasion: $(u_1, \pi_1, R_1)$ is the triple constructed in the previous section, the set of directions $K$ is fixed by Remark \ref{rmk:directions-fixed}, the relations \eqref{eq:general-assumpt-mikado} hold, the assumptions of Proposition \ref{prop:main} hold, the choice \eqref{eq:def-epsilon} holds.
\end{remark}

\subsection{Constants}\label{ssec:con}
We distinguish two types of constants: the uniform ones ($M$'s) and the usual ones  ($C$'s). None depend on $\mu,\lambda_1, \lambda_2, \omega$. 

More precisely, we denote by $M$ any constant depending only on the following parameters
\begin{equation}
\label{eq:uniform-obj}
\begin{aligned}
&\nu_0, \nu_1 &&  \text{the parameters entering in the definition of the non-Newtonian tensor field $\A$} \\
&q &&  \text{the exponent entering in the definition of the non-Newtonian tensor field $\A$} \\
&e  && \text{the energy profile fixed in the assumptions of Prop. \ref{prop:main}} \\
& \Phi, \Psi && \text{the profiles used in the definition of the Mikado functions in Section \ref{sec:mik}} \\
& K && \text{the fixed set of directions, cf.\ Remark \ref{rmk:directions-fixed}}.
\end{aligned}
\end{equation}
Consequently, any universal constant $M$ remains uniform over the convex integration iteration.  We will not explicitly write the dependence of $M$'s on the objects in \eqref{eq:uniform-obj}. 

\smallskip

On the other side, we will denote by $C$ (possibly with subscripts) any constant  depending not only on the universal quantities \eqref{eq:uniform-obj}, but also on 
\begin{equation}
\label{eq:obj-no-written}
(u_0, \pi_0, R_0), \delta, \eta \qquad \text{given in the assumptions of Prop. \ref{prop:main}}, 
\end{equation}
\begin{equation}
\label{eq:obj-written}
\begin{aligned}
&i,r&&  \text{if we are estimating the $W^{i,r}$ norm} \\
&N && \text{the (not yet fixed) parameter in \eqref{eq:parameters}},
\end{aligned}
\end{equation}
Constants $C$ will be controlled within each iteration step (i.e.\ in the proof of  Proposition \ref{prop:main}) by appropriate choices $\mu,\lambda_1, \lambda_2, \omega$.

\subsection{Preliminary estimates: control of $a_k$}\label{ssec:chia}
\begin{prop}\label{prop:ak}
Coefficients $a_k$ defined via \eqref{eq:def-ak} satisfy
\begin{equation}\label{eq:chipaa}
\left| a_k (t)  \right|_2 \le 2 \delta^\frac12,
\end{equation}
\begin{equation}\label{eq:chica2}
\qquad \qquad \quad  \left| a_k\right|_{C^i_{x,t}} \le C_{i} \qquad \text{ for $i \ge 0$ }
\end{equation}
\end{prop}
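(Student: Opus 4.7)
The plan is to prove the two bounds separately, using tracelessness of $\cR_0^\epsilon$ for the $L^2$ estimate and the bounds \eqref{eq:ass_e} for the pointwise control of $\varrho$ from above and below.

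For \eqref{eq:chipaa}, I would first observe that since $\cR_0$ is trace-free by construction (see the remark preceding Definition \ref{def:nnr}), so is its mollification $\cR_0^\epsilon$. Applying Lemma \ref{lem:NasUFS} to $R = \Id + \cR_0^\epsilon/\varrho \in \cN$ (which lies in the right range by \eqref{eq:unifN}) and taking the trace of the identity $R = \sum_k \Gamma_k^2(R)\,k\otimes k$ yields
\[
\sum_{k \in K} \Gamma_k^2 \Big(\Id + \frac{\cR_0^\epsilon}{\varrho}\Big) |k|^2 = \tr\Big(\Id + \frac{\cR_0^\epsilon}{\varrho}\Big) = d,
\]
so $\Gamma_k^2 \leq d/|k|^2 \leq d$. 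Therefore $a_k^2 \leq d\,\varrho$ pointwise, and $|a_k(t)|_2^2 \leq d \int_\Td \varrho$. It remains to bound $\int \varrho$: using $\sqrt{\epsilon^2 + |\cR_0^\epsilon|^2} \leq \epsilon + |\cR_0^\epsilon|$ together with \eqref{eq:Rdelta-epsilon} and \eqref{eq:def-epsilon} gives $2\int\sqrt{\epsilon^2 + |\cR_0^\epsilon|^2} \leq 2\epsilon + 2|\cR_0^\epsilon|_1 \leq \delta/(2^5 d)$, while the assumption \eqref{eq:ass_e} yields a direct estimate $\gamma_0(t) \leq 3\delta/(4d)$ after substituting into the definition \eqref{eq:gnot}. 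Combining these gives $|a_k(t)|_2^2 \leq d \cdot \big(\delta/(2^5 d) + 3\delta/(4d)\big) = 25\delta/32 \leq 4\delta$, which is \eqref{eq:chipaa}.

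For \eqref{eq:chica2}, the key point is that $\varrho$ is strictly positive with all derivatives under control. From the other half of \eqref{eq:ass_e}, using $e(t) \geq 1/2$, the same arithmetic as above gives the lower bound $\gamma_0(t) \geq \delta/(8d) > 0$, hence $\varrho \geq \gamma_0 \geq \delta/(8d)$ uniformly. Since $\cR_0^\epsilon$ is smooth by mollification (with all derivatives bounded in terms of $\cR_0$, $\epsilon$), the function $\varrho$ is smooth and bounded away from zero, so $\varrho^{1/2}$ is smooth with all $C^i_{x,t}$ norms controlled; the extra $\epsilon^2$ inside the square root in \eqref{eq:motiv_rho} ensures smoothness of the $\sqrt{\epsilon^2 + |\cR_0^\epsilon|^2}$ summand. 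The argument of $\Gamma_k$ stays in the compact set $\{R : \tfrac12 \Id \leq R \leq \tfrac32 \Id\} \subset \cN \subset \cS_+$ thanks to \eqref{eq:unifN}, and $\Gamma_k$ is smooth on $\cN$ by Lemma \ref{lem:NasUFS}. Composition and the chain rule then give $|a_k|_{C^i_{x,t}} \leq C_i$ where the constant depends on $u_0, \cR_0, \delta, \epsilon, i$; this dependence is exactly what the notational convention for $C_i$ allows (cf.\ \eqref{eq:obj-no-written} and \eqref{eq:obj-written}).

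The only mildly subtle point is ensuring that the universal constant in \eqref{eq:chipaa} is truly uniform in the iteration. This hinges on combining the absolute smallness $|\cR_0^\epsilon|_1, \epsilon \lesssim \delta/d$ (the ``budget'' postulated in \eqref{eq:Rdelta}, \eqref{eq:def-epsilon}) with the dimension factor from the trace identity, so that the $d$ from $\Gamma_k^2 \leq d$ and the $1/d$ inside $\gamma_0$ precisely cancel. The $C^i$ bound, by contrast, carries no such uniformity requirement, so it is essentially a standard smoothness argument once the lower bound on $\varrho$ is established.
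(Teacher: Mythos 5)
Your proof is correct and follows essentially the same route as the paper: bound $a_k^2 = \varrho\,\Gamma_k^2$ pointwise, integrate $\varrho$ using the smallness of $\epsilon$ and $|\cR_0^\epsilon|_1$ together with the upper bound on $\gamma_0$ coming from \eqref{eq:ass_e}, and for \eqref{eq:chica2} use $\varrho \ge \gamma_0 \ge \delta/(8d)$ plus smoothness of $\Gamma_k$ on the compact set fixed by \eqref{eq:unifN}. The only (harmless) detour is your trace-identity derivation of $\Gamma_k^2 \le d$: Lemma \ref{lem:NasUFS} already states $\Gamma_k \le 1$, which is what the paper uses, so the $d$-versus-$1/d$ cancellation you flag as the subtle point is not actually needed.
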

\begin{proof}
The definition  \eqref{eq:gnot} of $\gamma_0$, assumption \eqref{eq:ass_e}, and the assumed bounds on the energy profile $e \in  [\frac{1}{2},1]$ yield  $\gamma_0(t) \in [\frac{\delta}{8d},\delta]$.
This and the choice \eqref{eq:def-epsilon} of $\epsilon$ give, via the definition \eqref{eq:motiv_rho} of $\varrho$,
\[
\varrho(x,t) \leq 2 \epsilon + 2 |\cR_0^\epsilon(x,t)| + \gamma_0(t) \leq 3 (|\cR_0^\epsilon(x,t)| + \delta).
\]
Therefore, since $a^2_k = \varrho \Gamma^2_k$ by its definition  \eqref{eq:def-ak}, whereas $\Gamma_k \leq 1$ by Lemma \ref{lem:NasUFS},
\[
|a_k(t)|_2^2 = \int_{\Td} \varrho \Gamma^2_k 
\leq 3 (\delta + |\cR_0^\epsilon(t)|_1),
\]
The last inequality together with \eqref{eq:Rdelta-epsilon} yields \eqref{eq:chipaa}.

Using smoothness of $\Gamma_k$ supported in the compact set \eqref{eq:unifN} and that $\varrho \ge \gamma_0 \ge \frac{\delta}{8d}$, one has \eqref{eq:chica2}.
\end{proof}
\subsection{Estimates for velocity increments}\label{ssec:u} 
We will use now the improved H\"older inequality \eqref{eq:imprH} and the preliminary estimates to control $u_p, u_c$. 

\begin{prop}[Estimates for the principal increment $u_p$]\label{prop:up}
For every $r \in [1,\infty]$
\begin{equation}\label{eq:uplp<}
|\vp (t)|_r \le C_{r} \mu^{\frac{d}{2} - \frac{d}{r}}, 
\end{equation}
\begin{equation}\label{eq:upwp}
|\vp (t)|_{W^{1,r}} \le  C_{r} \lambda_2 \mu^{\frac{d}{2} - \frac{d}{r}}.
\end{equation}
Moreover, there is a universal constant $M>0$ such that
\begin{equation}\label{eq:uplp2}
|\vp (t)|_2 \le  M \delta^{1/2}  + C \lambda_1^{-\frac{1}{2}}.
\end{equation}
\end{prop}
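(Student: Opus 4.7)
\medskip

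\textbf{Proof plan for Proposition \ref{prop:up}.}
The proof will rest on three ingredients already established: the scaling bounds $(v')$ of Proposition \ref{prop:cltmik} for $W^k$, the uniform $C^i_{x,t}$ estimates of Proposition \ref{prop:ak} for the coefficients $a_k$, and the crucial pairwise disjointness of the supports of $W^k$ across $k \in K$.

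\emph{Estimates \eqref{eq:uplp<} and \eqref{eq:upwp}:} These are straightforward. I would apply the triangle inequality over the finite set $K$ to the definition \eqref{eq:cvp} and then Hölder. For \eqref{eq:uplp<},
\[
|u_p(t)|_r \le \sum_{k\in K} |a_k(t)|_\infty |W^k(t)|_r \le C\cdot M_{0,r} \mu^{\frac{d}{2}-\frac{d}{r}},
\]
using \eqref{eq:chica2} with $i=0$ and \eqref{eq:fdc} with $i=0$. For \eqref{eq:upwp}, differentiate $u_p=\sum_k a_k W^k$ and apply the same reasoning; the derivative falling on $W^k$ produces the dominant term $\lambda_2 \mu^{\frac{d}{2}-\frac{d}{r}}$, while the term with the derivative on $a_k$ is of lower order in $\lambda_2$ and absorbed by the constant.

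\emph{Estimate \eqref{eq:uplp2}:} This is the main point. I would first exploit the disjointness of the supports of $W^k$ (property $(iv')$) to eliminate the cross terms:
\[
|u_p(t)|_2^2 = \sum_{k \in K} \int_{\Td} a_k^2(t)\, |W^k(t)|^2.
\]
Next, split $|W^k|^2$ into its mean and its oscillating part:
\[
\int_{\Td} a_k^2 |W^k|^2 = \Big(\dashint |W^k|^2\Big) \int_{\Td} a_k^2 + \int_{\Td} a_k^2\, P_{\neq 0}|W^k|^2.
\]
For the mean term, observe that $\tr\!\big(\dashint W^k \otimes W^k\big) = \dashint |W^k|^2$, so property \eqref{eq:mean-value-mikado} together with $\lambda_1\mu/\lambda_2<1/2$ gives $\dashint |W^k|^2 \le |k|^2 + M\lambda_1\mu/\lambda_2 \le M$; combined with \eqref{eq:chipaa} this contributes $\le M\delta$.

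For the oscillatory term, the key observation is that since $\lambda_2/\lambda_1\in\N$, both $\Psi^k_{\lambda_2}$ and $\Phi^k_{\mu,\lambda_1}$ are $\lambda_1^{-1}$-periodic, hence so is $|W^k|^2$ (the Galilean shift $x-\omega t k$ does not affect spatial periodicity). Thus $P_{\neq 0}|W^k|^2 = h^k_{\lambda_1}$ for a null-mean function $h^k$ on $\Td$ with $|h^k|_1 \le 2|W^k|_2^2 \le M$. I would then apply the mean value inequality (Proposition \ref{prop:mosc}) with $r=\infty$, $r'=1$, $a=a_k^2$, $v=h^k$:
\[
\Big|\int_{\Td} a_k^2 P_{\neq 0}|W^k|^2\Big| \le \lambda_1^{-1} C |\nabla a_k^2|_\infty |h^k|_1 \le C \lambda_1^{-1}.
\]
Summing the two contributions yields $|u_p(t)|_2^2 \le M\delta + C\lambda_1^{-1}$, and taking the square root (using $\sqrt{A+B}\le \sqrt A+\sqrt B$) gives \eqref{eq:uplp2}.

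\emph{Expected obstacle.} The first two bounds are essentially triangle-inequality bookkeeping. The subtlety lies entirely in separating the $M\delta^{1/2}$ universal part from the $C\lambda_1^{-1/2}$ remainder in \eqref{eq:uplp2}: one must refrain from the crude $|u_p|_2\le\sum_k|a_k|_\infty|W^k|_2$, which loses the $\delta^{1/2}$ smallness, and instead extract the correct prefactor using disjoint supports together with the small-frequency oscillation provided by the periodicity of $|W^k|^2$ at scale $\lambda_1^{-1}$.
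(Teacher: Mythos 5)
Your bounds \eqref{eq:uplp<} and \eqref{eq:upwp} coincide with the paper's argument (triangle inequality over the finite set $K$, $|a_k|_\infty$ from \eqref{eq:chica2}, $|W^k|_{W^{i,r}}$ from \eqref{eq:fdc}). For the key estimate \eqref{eq:uplp2} you take a genuinely different, but correct, route. The paper applies the improved H\"older inequality (Proposition \ref{lem:improved-holder}) directly to $|a_k W^k(t)|_2$, using the spatial $\lambda_1$-periodicity of $W^k$, to get $|u_p(t)|_2 \le M\sum_k\big(|a_k(t)|_2 + \lambda_1^{-1/2}|a_k(t)|_{C^1_x}\big)$, and then invokes \eqref{eq:chipaa} and \eqref{eq:chica2}. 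You instead square, use the pairwise disjointness $(iv')$ to kill the cross terms, split $|W^k|^2$ into its mean (controlled by the trace of \eqref{eq:mean-value-mikado} and paired with $|a_k|_2^2\le 4\delta$) and its null-mean $\lambda_1$-oscillating part (controlled by the mean-value Proposition \ref{prop:mosc} with $r=\infty$), and conclude with $\sqrt{A+B}\le\sqrt A+\sqrt B$. This is precisely the computation the paper performs later in the energy estimate of Proposition \ref{prop:ei} (cf.\ \eqref{eq:el1ia}--\eqref{eq:el1ib}), so your method is fully consistent with the paper's toolbox, merely deployed one proposition earlier; what the paper's route buys is brevity (one black-box lemma, no need for the disjoint-support reduction), while yours is more self-contained and makes transparent where the universal prefactor $M\delta^{1/2}$ comes from. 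Both yield the required separation of the universal $M\delta^{1/2}$ from the $C\lambda_1^{-1/2}$ tail.
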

\begin{proof} The definition \eqref{eq:cvp} of $\vp$ 
yields
\begin{equation}\label{eq:uplpf}
|\vp (t)|_r 
 \le \sum_{k \in K}   |a_k  (t) W^k (t)|_r.
 \end{equation}
Using \eqref{eq:chica2} to control $|a_k|_\infty$ and \eqref{eq:fdc} to control $|W^k (t)|_r$, we obtain  \eqref{eq:uplp<}. An analogous computation gives \eqref{eq:upwp}. 

Notice that for $r=2$ the power of $\mu$ in \eqref{eq:uplp<} is $0$: for this reason, to reach \eqref{eq:uplp2}, one needs more care. Recall from Proposition \ref{prop:cltmik} that $W^k$ is $\lambda_1$-periodic. Therefore we may apply improved H\"older inequality (Proposition \ref{lem:improved-holder}) to the r.h.s.\ of \eqref{eq:uplpf} and use \eqref{eq:fdc} for $ |W^k(t)|_2$ to obtain
\[
\begin{aligned}
|\vp (t)|_2 \le M \sum_{k \in K}   |a_k(t)|_2  +  {\lambda_1}^{-\frac{1}{2}} |a_k(t)|_{C_x^1} 
\end{aligned}
\]
Let us now use \eqref{eq:chipaa}, \eqref{eq:chica2} to control $a_k$, reaching \eqref{eq:uplp2}.
\end{proof}

Now we deal with the corrector $u_c$. In order to shorten the related formulas, let us introduce
\begin{equation}\label{eq:L}
L(r) :=  \frac{ \mu^{d-\frac{d}{r}}}{\omega}  + \frac{\lambda_1 \mu}{\lambda_2}  \mu^{\frac{d}{2} - \frac{d}{r} } \left[ 1 + \lambda_2^2 \left( \frac{\lambda_1 \mu}{\lambda_2} \right)^{N} \right]
\end{equation}
Observe that $r \mapsto L(r)$ is a non-decreasing map. 
\begin{prop}[Estimates for the corrector $u_c$]\label{prop:uc}
For every $r \in (1, \infty)$ it holds
\begin{equation}\label{eq:ucwp}
|\vc (t)|_{\dot W^{i,r}} \le C_{i,d,r,N} \lambda^i_2 L(r).
\end{equation}
\end{prop}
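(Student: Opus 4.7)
The plan is to split the corrector via \eqref{eq:cvc} as $\vc = u_c^I + u_c^{II} - \dashint \vp$ and bound the three pieces separately, matching them against the two nontrivial pieces of $\lambda_2^i L(r)$ (with $\dashint\vp$ being negligible). The first two pieces of $L(r)$ arise from $u_c^I$ and $u_c^{II}$, respectively.

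For $u_c^I = \Po P_H \sum_{k \in K} a_k^2 Y^k$, see \eqref{eq:uc1def}, I would use the fact that for $r \in (1,\infty)$ both $\Po$ and the Helmholtz projection are Calder\'on--Zygmund operators, hence bounded on $\dot W^{i,r}(\Td)$. Combining this with the Leibniz rule, the uniform $C^i_{x,t}$ bound $|a_k|_{C^i} \le C_i$ from Proposition \ref{prop:ak}, and the scaling bound $|Y^k(t)|_{W^{i,r}} \le M_{i,r} \lambda_2^i \mu^{d-d/r}/\omega$ from \eqref{eq:fdc}, yields
\[
|u_c^I(t)|_{\dot W^{i,r}} \le C_{i,r}\, \lambda_2^i\, \frac{\mu^{d-d/r}}{\omega},
\]
which is exactly $\lambda_2^i$ times the first term of $L(r)$.

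For $u_c^{II}$, the plan is to write it as $-\divv \sum_k \idivn^2_N(f_k, \Psi^k_{\lambda_2})$ with $f_k(t,\cdot) := k \cdot \nabla\big(a_k(t)\, \Phi^k_{\mu,\lambda_1}(\,\cdot\,-\omega t k)\big)$ per \eqref{eq:precvc}, and to apply the double antidivergence estimate \eqref{eq:inv_impr2div} with $j = i+1$, $s = \infty$ and $g = \Psi^k$ (a universal smooth profile, so $|g|_{W^{j,\infty}} = M_j$); the oscillation frequency contributed by $\Psi^k$ is $\lambda = \lambda_2$. Leibniz combined with the $C^j$ bounds on $a_k$ (Proposition \ref{prop:ak}) and the scaling of $\Phi^k_{\mu,\lambda_1}$ from \eqref{eq:cf1} yields $|\nabla^j f_k(t)|_r \le C_j \lambda_1^{j+1}\mu^{j+1+d/2-d/r}$. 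Substituting these three estimates (for $j = 0, N, 2N+i+1$) into \eqref{eq:inv_impr2div} and collecting factors produces three contributions. A short algebraic check shows that the first matches $\lambda_2^i \cdot \frac{\lambda_1\mu}{\lambda_2}\mu^{d/2-d/r}$, the second matches $\lambda_2^i \cdot \frac{\lambda_1\mu}{\lambda_2}\mu^{d/2-d/r} \cdot \lambda_2^2 (\lambda_1\mu/\lambda_2)^N$, and the third is smaller than the second by a factor $(\lambda_1\mu/\lambda_2)^{N+i+1} < 1$, using the smallness assumption \eqref{eq:general-assumpt-mikado}.

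For the last piece, $\dashint \vp$ is spatially constant, so its $\dot W^{i,r}$ seminorm vanishes for $i \ge 1$, and for $i=0$ it is bounded by $|\vp(t)|_1 \le C\mu^{-d/2}$ via \eqref{eq:uplp<} with $r=1$, which is absorbed into $L(r)$ with room to spare. The main technical obstacle is purely bookkeeping in the second step: tracking the three terms produced by \eqref{eq:inv_impr2div} after the $(i+1)$-fold gradient is distributed over $\idivn^2_N$, identifying which piece of $L(r)$ each of them matches, and invoking $\lambda_1\mu/\lambda_2 < 1/2$ to absorb the highest-derivative term into the middle one. Once this matching is confirmed, summing over $k \in K$ (a finite set) closes the argument.
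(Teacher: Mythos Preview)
Your overall strategy and the treatment of $u_c^I$ and $u_c^{II}$ match the paper's proof essentially line by line, including the use of \eqref{eq:inv_impr2div} with $j=i+1$, $s=\infty$ and the absorption of the third term via $(\lambda_1\mu/\lambda_2)^{N+i+1}<1$.

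The one genuine gap is your bound on $\dashint u_p$. You estimate $\big|\dashint u_p\big| \le |u_p(t)|_1 \le C\mu^{-d/2}$ and assert this is ``absorbed into $L(r)$ with room to spare''. That is not true for all $r\in(1,\infty)$ uniformly in the Mikado parameters (which is what the proposition claims; the constants may depend on $N$ but not on $\mu,\lambda_1,\lambda_2,\omega$). For instance, take $\lambda_1=1$, $\lambda_2=\mu^2$, $\omega$ very large, and $N\ge 5$: condition \eqref{eq:general-assumpt-mikado} holds, and then $L(r)\sim \frac{\lambda_1\mu}{\lambda_2}\mu^{d/2-d/r}=\mu^{d/2-d/r-1}$. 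For $r<\frac{d}{d-1}$ this is $o(\mu^{-d/2})$ as $\mu\to\infty$, so $\mu^{-d/2}$ cannot be controlled by $C\,L(r)$.

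The fix, which is exactly what the paper does, is to exploit that $\Psi^k$ has mean zero and oscillates at frequency $\lambda_2$: writing $\dashint u_p=\sum_k\int a_k\,\Phi^k_{\mu,\lambda_1}(\,\cdot\,-\omega t k)\,(\Psi^k)_{\lambda_2}\,k$ and applying Proposition~\ref{prop:mosc} with $v=\Psi^k$ yields
\[
\Big|\dashint u_p\Big|\le \frac{C}{\lambda_2}\sum_{k\in K}\big|\nabla\big(a_k\,\Phi^k_{\mu,\lambda_1}\big)\big|_1
\le C\,\frac{\lambda_1\mu}{\lambda_2}\,\mu^{-d/2}
\le C\,\frac{\lambda_1\mu}{\lambda_2}\,\mu^{d/2-d/r},
\]
which is precisely the second summand of $L(r)$. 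The extra factor $\lambda_1\mu/\lambda_2$ coming from the oscillation is essential here; the crude $L^1$ bound does not suffice.
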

\begin{proof}
Recall that $\vc =  (u_c^I + u_c^{II}) - \dashint_\Td u_p$ by its definition \eqref{eq:cvc}, with $\vc^I$ defined by \eqref{eq:uc1def} and $\vc^{II}$ defined by \eqref{eq:precvc}. 


The Calder\'on-Zygmund estimate, 
 \eqref{eq:chica2}, and \eqref{eq:fdc} give
\begin{equation}
\label{eq:eucI}
|\vc^I (t)|_{\dot W^{i,r}} \le  
C_{r} \frac{\lambda^i_2 \mu^{d - \frac{d}{r}}}{\omega}
\end{equation}
For the estimate of $u_c^{II}$ in $\dot W^{i,r}$ we use the inequality \eqref{eq:inv_impr2div} for $j=i+1$, $s=\infty$, which yields
\[
| u_c^{II}|_{\dot W^{i,r}}   \le C_{j,d,r,N} \lambda_2^{i+1} \Big( \frac{1}{\lambda_2^{2}} | \nabla( a_k \Phi^k_{\mu, \lambda_1})|_{r} + \frac{1}{\lambda_2^{N}} |\nabla^{N+1}( a_k \Phi^k_{\mu, \lambda_1})|_{r}  + \frac{1}{\lambda_2^{2N+i+1}} |\nabla^{2N +i+2}( a_k \Phi^k_{\mu, \lambda_1})|_{r} \Big).
\]
Now we estimate $a_k$ by \eqref{eq:chica2} and $\Psi^k_{\mu, \lambda_1}$ by \eqref{eq:cf1}. Using the assumption \eqref{eq:general-assumpt-mikado}, we arrive at
\begin{equation}\label{eq:uc2estW}
\begin{aligned}
| u_c^{II}|_{\dot W^{i,r}} \le   C_{i,d,r,N} {\lambda^i_2} \frac{\lambda_1 \mu}{{\lambda_2}}  \mu^{\frac{d}{2} -\frac{d}{r}}  \left[ 1+  \lambda_2^2     \left(\frac{\lambda_1 \mu}{{\lambda_2}} \right)^{N} \right].
\end{aligned}
\end{equation}
Recall $\vc =  (u_c^I + u_c^{II}) - \dashint_\Td u_p$ by its definition \eqref{eq:cvc}.
Therefore, putting together \eqref{eq:eucI}, \eqref{eq:uc2estW}, and
\[
\Big| \dashint u_p \Big| \le \frac{\lambda_1 \mu}{{\lambda_2}}  \mu^{-\frac{d}{2}} \le \frac{\lambda_1 \mu}{\lambda_2}  \mu^{\frac{d}{2} - \frac{d}{r} } 
\]
valid via \eqref{eq:mosc} with $r=1$ and $v = \Psi^k$, 
yields \eqref{eq:ucwp}.
\end{proof}

\subsection{Estimates on the Reynolds stress}\label{ssec:R}
Recall \eqref{eq:Rsplit}
\[
R_1 :=- (R_{lin} +  R_{corr} +   R_{quadr} + R_{moll} + R_{\A}). 
\]
In this section we estimate each term of $R_1$. For our further purposes $L^1$ estimates suffice, but due to using Calder\'on-Zygmund theory, some estimates are phrased as $L^r$ ones.

\begin{prop}[Estimates on the principal Reynolds $R_{quadr}$]\label{prop:Rquadr}
For every $r \in (1, \infty)$, it holds
\begin{equation}\label{eq:Rq}
|R_{quadr}(t)|_r \le  C_{r}  \left( (\omega^{-1}  + \lambda^{-1}_1 ) \mu^{d-\frac{d}{r}} +  \lambda_2^{-1} \lambda_1 \mu \right) 
\end{equation}
\end{prop}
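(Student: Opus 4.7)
The plan is to estimate separately the three contributions to $R_{quadr}$ as given in \eqref{eq:defRq}, namely
\[
R_{quadr} = \sum_{k \in K} \Big[ \tilde{\mathcal{R}}_1\!\big(\nabla a_k^2,\, \Po(W^k \otimes W^k)\big) \;+\; a_k^2 \Big(\dashint W^k \otimes W^k - k \otimes k\Big) \;+\; \idivn \Po P_H\!\big((\pat a_k^2) Y^k\big) \Big],
\]
each of which should produce exactly one of the three terms in \eqref{eq:Rq}. Throughout I will use that $a_k$ and its derivatives are bounded uniformly by Proposition \ref{prop:ak}, in particular $|a_k^2|_{C^i_{x,t}} \le C_i$ for any $i\ge 0$.

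For the first (``oscillation'') term I would use the improved bilinear antidivergence \eqref{eq:inv_imprAt}. The key point is that $\Po(W^k \otimes W^k)$ is spatially $\lambda_1$-periodic with zero mean by Proposition \ref{prop:cltmik}, so I can write it as $T_{\lambda_1}$ for a fixed profile $T$ and apply $\tilde{\mathcal{R}}_1$ with $v = \nabla a_k^2$ and parameters $p=r$, $s=r$, $r=\infty$, $N=1$. Together with the bound $|W^k \otimes W^k|_r \le M\mu^{d-d/r}$ coming from \eqref{eq:fdc} (via $|W^k|_{2r}^2$) and $|\nabla a_k^2|_{W^{1,\infty}} \le C$, this yields
\[
\Big| \tilde{\mathcal{R}}_1\big(\nabla a_k^2,\, \Po(W^k \otimes W^k)\big) \Big|_r \le C_r\, \lambda_1^{-1} \mu^{d-d/r}.
\]

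The second (``mean-value'') term is immediate: by \eqref{eq:chica2} and the mean-value estimate \eqref{eq:mean-value-mikado} of Proposition \ref{prop:cltmik},
\[
\Big| a_k^2 \Big(\dashint W^k \otimes W^k - k \otimes k\Big) \Big|_r \le |a_k|_\infty^2 \cdot M_1\, \frac{\lambda_1 \mu}{\lambda_2} \le C\, \lambda_2^{-1}\lambda_1 \mu,
\]
which is exactly the second summand of \eqref{eq:Rq}. For the third (``time-derivative'') term, I would apply the standard symmetric antidivergence bound \eqref{eq:inv_stnd} (which gives $|\idivn f|_r \le C_r |f|_r$) together with the $L^r$-boundedness of $\Po P_H$ from Calder\'on–Zygmund theory. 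This reduces matters to bounding $|(\pat a_k^2) Y^k|_r \le |\pat a_k^2|_\infty |Y^k|_r$. Using \eqref{eq:chica2} for the first factor and the $Y^k$ estimate in \eqref{eq:fdc} for the second (with $i=0$) gives
\[
\big| \idivn \Po P_H\!\big((\pat a_k^2) Y^k\big) \big|_r \le C_r\, \omega^{-1}\mu^{d-d/r}.
\]
Summing the three contributions over the finite set $K$ produces \eqref{eq:Rq}. The only mild subtlety is the first step: making sure that what we feed into $\tilde{\mathcal{R}}_1$ is genuinely a $\lambda_1$-rescaling of a fixed $C^\infty_0$ profile, so that the gain $\lambda_1^{-1}$ is legitimate; this is guaranteed by the spatial $\lambda_1$-periodicity of $W^k$ from Proposition \ref{prop:cltmik}, which relies on the assumption $\lambda_2/\lambda_1 \in \N$ in \eqref{eq:general-assumpt-mikado}.
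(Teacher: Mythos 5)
Your proposal is correct and follows essentially the same route as the paper: the same term-by-term decomposition of $R_{quadr}$, with $\tilde\idivn_1$ and the $\lambda_1$-oscillation of $\Po(W^k\otimes W^k)$ giving the $\lambda_1^{-1}\mu^{d-d/r}$ contribution, \eqref{eq:mean-value-mikado} giving $\lambda_1\mu/\lambda_2$, and \eqref{eq:inv_stnd} plus Calder\'on--Zygmund plus the $Y^k$ bound in \eqref{eq:fdc} giving $\omega^{-1}\mu^{d-d/r}$. The subtlety you flag about the second argument being a genuine $\lambda_1$-rescaling is handled exactly as you describe.
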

\begin{proof}
Recall the definition \eqref{eq:defRq} of $R_{quadr}$. Let us estimate its three terms in order of their appearance. 

(i) The first term of $R_{quadr}$ is the sum over $k$ of $\tilde \idivn_1 (\nabla a_k^2, \Po (W^k\otimes W^k))$. The term $\Po (W^k\otimes W^k))$ is null mean and it oscillates at the frequency $\lambda_1$, since $W^k$ does. Therefore \eqref{eq:inv_imprA} with \eqref{eq:chica2} and \eqref{eq:fdc} give
\begin{equation}
\label{eq:Rqa}
|\tilde \idivn_1 (\nabla a_k^2, \Po (W^k \otimes W^k))|_r
 \le C_{r} \lambda^{-1}_1 |\Po (W^k \otimes W^k))|_r |\nabla^2 (a_k^2)|_\infty  \le   C_{r}  \lambda^{-1}_1\mu^{ d - \frac{d}{r}}.
\end{equation}

(ii) The second term of $R_{quadr}$ is the sum over $k$ of $a_k^2 ( \dashint W^k\otimes W^k - k \otimes k)$. We use estimate \eqref{eq:chica2} to control $a_k$ terms and \eqref{eq:mean-value-mikado} to control the Mikado terms
\begin{equation}
\label{eq:Rqb}
 \Big| a_k^2 \Big( \dashint W^k\otimes W^k- k \otimes k \Big)  \Big|_r  \le |a^2_k|_r  \Big| \dashint W^k \otimes W^k - k \otimes k  \Big| \leq C_r \frac{\lambda_1 \mu}{\lambda_2}.
\end{equation}

\smallskip

(iii) The last term is the sum over $k$ of $\divn P_H \left( \Po (\pat a^2_k) Y^k \right)$. We deal with $\divn$ via \eqref{eq:inv_stnd} and with $P_H$ via Calder\'on-Zygmund, control  $\pat a_k$ using \eqref{eq:chica2}, and use \eqref{eq:fdc} to estimate $Y^k$. Hence
\begin{equation}
\label{eq:Rqc}
\left|\divn P_H \left( \Po (\pat a^2_k) Y^k \right)\right|_r  (t) \le C_r \left|   \Po (\pat a^2_k) Y^k\right|_r  (t) \le C_{r} |Y^k(t)|_r \leq C_{r}  \frac{ \mu^{d-\frac{d}{r}}}{\omega}.
\end{equation}
Together,  \eqref{eq:Rqa},  \eqref{eq:Rqb},  \eqref{eq:Rqc} yield \eqref{eq:Rq}. 
\end{proof}

\begin{prop}[Estimate on $R_{lin}$] 
It holds 
\begin{equation}\label{eq:Rlinest}
|R_{lin}|_1 (t) \le C_{N} \bigg[ \mu^{-\frac{d}{2}}  + 
\frac{  \lambda_1  \mu^{1-\frac{d}{2}} \omega  }{\lambda_2} \Big( 1 + \lambda_2^2 \Big(\frac{\lambda_1 \mu}{\lambda_2} \Big)^{N} \Big)  \bigg] .
\end{equation}
\end{prop}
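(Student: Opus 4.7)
The plan is to estimate each of the three summands of $R_{lin}$ in \eqref{eq:defRlin} separately in $L^1$. The first (low) term $\mu^{-d/2}$ in \eqref{eq:Rlinest} will come from the coupling $u_0 \otimes u_p + u_p \otimes u_0$, while the remaining $\omega$-dependent contribution will come from the two antidivergence terms via the improved inverse-divergence estimates of Proposition \ref{prop:invdiv}.

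For the coupling term, I would use H\"older to write
\[
|u_0 \otimes u_p|_1 + |u_p \otimes u_0|_1 \le 2 |u_0|_\infty |u_p|_1 \le C \mu^{-d/2},
\]
where $|u_0|_\infty$ is a constant (absorbed into $C$ since $u_0$ is smooth and fixed) and $|u_p|_1 \le C_1 \mu^{d/2 - d} = C_1 \mu^{-d/2}$ is obtained from \eqref{eq:uplp<} with $r=1$. This already accounts for the first summand in \eqref{eq:Rlinest}.

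For the two antidivergence terms, first compute
\[
\pat\bigl(a_k(\cdot,t)\,\Phi^k_{\mu,\lambda_1}(\cdot - \omega k t)\bigr) = (\pat a_k)\,\Phi^k_{\mu,\lambda_1} - \omega\, a_k\,(k\cdot \nabla)\Phi^k_{\mu,\lambda_1},
\]
whose highest-$\omega$ piece is $-\omega a_k (k\cdot \nabla)\Phi^k_{\mu,\lambda_1}$. I would then apply \eqref{eq:inv_imprA} to the first antidivergence with $p=1$, $s=\infty$, $r=1$ and $\lambda=\lambda_2$ (legitimate thanks to $\lambda_2/\lambda_1 \in \N$ in \eqref{eq:general-assumpt-mikado}, which makes $\Psi^k_{\lambda_2} k$ appear at frequency $\lambda_2$ on $\T^d$), and \eqref{eq:inv_impr2div} with $j=0$, $p=1$, $s=\infty$, $r=1$ to the second antidivergence. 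Using the uniform $C^i_{x,t}$ bounds for $a_k$ of Proposition \ref{prop:ak} and the Sobolev bounds \eqref{eq:cf1} for $\Phi^k_{\mu,\lambda_1}$, the dominant norms of the arguments scale like
\[
\bigl|\pat(a_k \Phi^k_{\mu,\lambda_1})\bigr|_1 \lesssim \omega \lambda_1 \mu^{1-\tfrac{d}{2}}, \qquad \bigl|\nabla^N \pat(a_k \Phi^k_{\mu,\lambda_1})\bigr|_1 \lesssim_N \omega \lambda_1^{N+1}\mu^{N+1-\tfrac{d}{2}},
\]
and analogously for $k\cdot \nabla \pat(a_k \Phi^k_{\mu,\lambda_1})$ with one extra factor of $\lambda_1 \mu$ per derivative. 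Dividing by $\lambda_2$, $\lambda_2^N$, $\lambda_2^2$, $\lambda_2^{2N}$ as dictated by \eqref{eq:inv_imprA} and \eqref{eq:inv_impr2div}, and factoring out $\omega \lambda_1 \mu^{1-d/2}/\lambda_2$, produces bounds of the form
\[
C_N\,\frac{\omega \lambda_1 \mu^{1-d/2}}{\lambda_2}\Bigl(1 + \lambda_2^2 \Bigl(\tfrac{\lambda_1 \mu}{\lambda_2}\Bigr)^N\Bigr).
\]

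Finally, I would sum over the finite set $K$ (absorbing $|K|$ and $|k|$-factors into $C_N$) and use $\lambda_1 \mu /\lambda_2 < 1/2$ together with $\lambda_2 \ge 1$ to absorb the sub-leading contributions. Specifically, the non-$\omega$ piece $(\pat a_k)\Phi^k_{\mu,\lambda_1}$ only yields $L^1$-contributions of size $\lesssim \mu^{-d/2}$ after dividing by $\lambda_2, \lambda_2^N, \lambda_2^{2N}$, which are swallowed by the $\mu^{-d/2}$ term already produced by the coupling. The main accounting obstacle is bookkeeping the interplay of three scales ($\lambda_1, \mu, \lambda_2$) inside the $\nabla^N$ and $\nabla^{2N}$ norms, in particular verifying that the $\frac{1}{\lambda_2^2}|f|_1$ contribution from $\idivn^2_N$ and the $\frac{1}{\lambda_2^{2N}}|\nabla^{2N}f|_1$ contribution are both dominated, via $\lambda_1 \mu/\lambda_2 < 1/2$, by the two terms explicitly displayed in \eqref{eq:Rlinest}.
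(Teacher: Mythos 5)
Your proposal is correct and follows essentially the same route as the paper: the same three-way split of $R_{lin}$, the bound $|u_0|_\infty|u_p|_1\le C\mu^{-d/2}$ for the coupling term, and the estimates \eqref{eq:inv_imprA}, \eqref{eq:inv_impr2div} with $s=\infty$, $r=1$ for the two antidivergence terms, with the same derivative bookkeeping as in \eqref{eq:afl2} and the same use of $\lambda_1\mu/\lambda_2<1/2$ to absorb subleading powers. The only cosmetic difference is that you absorb the non-$\omega$ piece $(\pat a_k)\Phi^k_{\mu,\lambda_1}$ into the $\mu^{-d/2}$ term, whereas the paper dominates it by the $\omega$-piece inside \eqref{eq:afl2}; both are fine.
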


\begin{proof}
Recall the definition \eqref{eq:defRlin} of $R_{lin}$. It involves three terms, which we estimate in order of their appearance. 

(i) The first term of $R_{lin}$ is the sum over $k$ of \[\idivn_N \big( (\pat a_k) \Phi^k_{\mu, \lambda_1}  + \omega a_k  k \cdot \nabla \Phi^k_{\mu,  \lambda_1}, \Psi^k_{\lambda_2} k \big).\]
Using \eqref{eq:inv_imprA} with  $|u|_s = |\Psi^k|_\infty$, the assumption \eqref{eq:general-assumpt-mikado},  and disposing of $a_k$ as usual, one has
\begin{equation}
\label{eq:Rlinb}
| \idivn_N \left(\pat \left( a_k (\cdot, t) \Phi^k_{\mu, \lambda_1} (\cdot - \omega k t) \right),  \Psi^k_{\lambda_2}k  \right)  |_1 (t)  \le C_{d, N} \frac{\omega \lambda_1 \mu^{1 - d/2}}{\lambda_2} \Big[ 1 +  \lambda_2 \Big( \frac{\lambda_1 \mu}{\lambda_2} \Big)^N \Big].
\end{equation}

(ii) The second term of $R_{lin}$ is the sum over $k$ of $$\idivn^2_N (k \cdot \nabla \pat ( a_k \Phi^k_{\mu, \lambda_1} (\, \cdot \, - \omega k t ), \Psi^k_{\lambda_2}).$$ 
We observe that 
\begin{equation}\label{eq:afl2}
|\nabla^{i+1}  \pat \left( a_k \Phi^k_{\mu, \lambda_1} (\, \cdot \, - \omega k t  ) \right)|_1
 \leq |\partial_t a_k \Phi^k_{\mu, \lambda_1}|_{W^{i+1,1}} + \omega |k| | a_k \nabla \Phi^k_{\mu, \lambda_1}|_{W^{i+1,1}} \\
 \leq C \omega \lambda_1^{i+2} \mu^{i+2 - \frac{d}{2}}.
\end{equation}
Using the computation \eqref{eq:afl2} in \eqref{eq:inv_impr2div} with $j=0$ and $|u|_s = |\Psi^k|_\infty$, we get
\begin{equation}\label{eq:Rlinc}
\big| \idivn^2_N \left(k \cdot \nabla \pat \left( a_k \Phi^k_{\mu, \lambda_1} (\, \cdot \, - \omega k t )  \right), \Psi^k_{\lambda_2}  \right) \big|_1 \leq C_{d,N}  \, \omega \mu^{-\frac{d}{2}}   \Big( \frac{\lambda_1 \mu}{\lambda_2} \Big)^2    \left[ 1   + \lambda_2^2 \Big( \frac{\lambda_1 \mu}{\lambda_2} \Big)^N    \right].
\end{equation}

(iii) The third term of $R_{lin}$ equals $u_0 \otimes \vp + \vp \otimes u_0$, so we write using \eqref{eq:uplp<}
\begin{equation}\label{eq:Rlind}
 |u_0|_\infty |\vp|_1 \le C\mu^{-\frac{d}{2}}.
\end{equation}

Putting together \eqref{eq:Rlinb},  \eqref{eq:Rlinc}, \eqref{eq:Rlind}, and observing that the right-hand sides of both \eqref{eq:Rlinb} and \eqref{eq:Rlinc} are estimated by  $C_{d,N}  \, \omega \mu^{-\frac{d}{2}}   \frac{\lambda_1 \mu}{\lambda_2}    [ 1   + \lambda_2^2 ( \frac{\lambda_1 \mu}{\lambda_2})^N]$ thanks to \eqref{eq:general-assumpt-mikado}, one has \eqref{eq:Rlinest}.
\end{proof}


\begin{prop}[Estimates on $R_{corr}$]\label{prop:Rc} Let  $L(2)$ be given by \eqref{eq:L}. It holds
\begin{equation}\label{eq:Rc1}
\begin{aligned}
|R_{corr} (t)|_1 \le C_{N} \big(L(2) + L^2(2) \big).
\end{aligned}
\end{equation}
\end{prop}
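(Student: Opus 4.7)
The plan is a direct H\"older estimate of each of the five terms in the definition \eqref{eq:defRcorr} of $R_{corr}$, using the $L^2$ control of $u_p$, $u_c$ and $u_0$ already obtained in Propositions \ref{prop:up} and \ref{prop:uc}, together with the $L^2$-bound on the background flow $u_0$, which is a \emph{fixed} datum of the construction (hence absorbed into a constant of type $C$, cf.\ \eqref{eq:obj-no-written}).

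Concretely, for any two vector fields $a, b \in L^2(\T^d;\mathbb{R}^d)$ one has $|a \otimes b|_{L^1} \leq |a|_{L^2} |b|_{L^2}$. Applied term by term, this yields
\begin{align*}
|R_{corr}(t)|_{L^1} &\leq 2 |u_0(t)|_{L^2} |u_c(t)|_{L^2} + 2 |u_p(t)|_{L^2} |u_c(t)|_{L^2} + |u_c(t)|_{L^2}^2.
\end{align*}
From Proposition \ref{prop:uc} with $i=0$, $r=2$ we get $|u_c(t)|_{L^2} \leq C_{d,N}\, L(2)$, and from \eqref{eq:uplp2} we get $|u_p(t)|_{L^2} \leq M\delta^{1/2} + C\lambda_1^{-1/2}$, which, under the standing assumption \eqref{eq:general-assumpt-mikado} and $\delta \leq 1$, is bounded by a universal constant. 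Finally $|u_0(t)|_{L^2}$ is bounded by a constant depending only on the (fixed) triple $(u_0, \pi_0, R_0)$, hence of type $C$.

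Combining these three bounds and collecting the constants into a single $C_N$ (all dependencies besides $N$ are either universal or encoded in the fixed data), one obtains
\[
|R_{corr}(t)|_{L^1} \leq C_N\, L(2) + C_N\, L(2)^2,
\]
which is exactly \eqref{eq:Rc1}. No obstacle is expected here: the only subtlety is the bookkeeping of constants, namely distinguishing the $N$-dependent factor arising from the antidivergence operator $\mathcal{R}^2_N$ inside $u_c^{II}$ from the $N$-independent constants coming from $u_0$ and $u_p$, and checking that all of them fit into the $C_N$ convention of Section \ref{ssec:con}.
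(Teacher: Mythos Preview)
Your proof is correct and essentially identical to the paper's own argument: both apply H\"older's inequality termwise to \eqref{eq:defRcorr}, bound $|u_c|_2$ by $C_N L(2)$ via Proposition~\ref{prop:uc}, bound $|u_p|_2$ by a constant (the paper cites \eqref{eq:uplp<} with $r=2$ rather than \eqref{eq:uplp2}, but this is immaterial), and absorb $|u_0|_2$ into the non-uniform constant $C$.
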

\begin{proof} 
By  the definition  \eqref{eq:defRcorr} of $R_{corr}$ we have
\[
|R_{corr}|_1 (t) \le C(|u_0|_2  |\vc|_2 + |\vp|_{2} |\vc|_{2} + |\vc|^2_{2})(t),
\]
since $|\vp|_{2} \le C$ via \eqref{eq:uplp<} and $|\vc|_{2} \le  C_N L(2)$ via \eqref{eq:ucwp}, we have \eqref{eq:Rc1}. 
\end{proof}

\begin{prop}[Estimates on the dissipative Reynolds $R_{\A}$]\label{prop:RA}
For $q \in (1, \infty)$ being the growth parameter of $\A$ it holds 
\begin{equation}\label{eq:RA}
|R_{\A}(t)|_r \le \left\{ 
\begin{aligned}
 & C   \big(  {( \lambda_2 \mu^{\frac{d}{2} - \frac{d}{r}})^{q-1}} + (\lambda_2 L(r))^{q-1}  \big) &\text{ for } \nu_0=0, q { \leq } 2, \;&&\text{any } r>1, \\
 & C  \big(  { \lambda_2 \mu^{\frac{d}{2} - \frac{d}{r}}}  +  \lambda_2 L(r) \big) &\text{ for } \nu_0>0, q {\leq} 2, \;&&\text{any } r>1, \\
 & C \Big( { \lambda_2 \mu^{\frac{d}{2} - \frac{d}{r(q-1)}}} +   { \big( \lambda_2 \mu^{\frac{d}{2} - \frac{d}{r(q-1)}} \big)^{q-1}} \\
 & \qquad  +  \lambda_2 L(r(q-1)) +  (\lambda_2 L(r(q-1)))^{q-1} \Big)&\text{ for } q\ge2, \;&&\text{any } r(q-1)>1.
\end{aligned}
\right.
\end{equation}
\end{prop}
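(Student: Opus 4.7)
\medskip

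\noindent\textbf{Proof plan.} The plan is to turn each of the three cases of Lemma \ref{lem:gA} into a pointwise bound on $|R_{\A}|$ involving only the increment $G := D(u_p + u_c)$, and then to estimate $|G|_s$ in the required Lebesgue norms using Propositions \ref{prop:up} and \ref{prop:uc} at $i=1$. Setting $Q = D(u_0 + u_p + u_c)$ and $P = Du_0$, so that $Q - P = G$, Lemma \ref{lem:gA} yields pointwise:
\begin{itemize}
\item[(1)] $|R_{\A}| \le C|G|^{q-1}$ when $\nu_0 = 0$, $q\le 2$;
\item[(2)] $|R_{\A}| \le C|G|$ when $\nu_0 > 0$, $q\le 2$;
\item[(3)] $|R_{\A}| \le C|G|(1 + |G|^{q-2})$ when $q \ge 2$, after absorbing the smooth term $|Du_0|^{q-2}$ into the constant (since $u_0$ is $C^\infty$), and applying $|Q|^{q-2} \le C(|Du_0|^{q-2} + |G|^{q-2})$.
\end{itemize}

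\medskip

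\noindent Taking $L^r$ norms and using $\||G|^{q-1}\|_r = |G|_{r(q-1)}^{q-1}$ together with $(a+b)^{q-1} \le 2^{q-1}(a^{q-1}+b^{q-1})$, this reduces matters to the two estimates
\begin{equation*}
|R_{\A}|_r \le C |G|_{r(q-1)}^{q-1} \qquad \text{in case (1)}, \qquad |R_{\A}|_r \le C |G|_r \qquad \text{in case (2)},
\end{equation*}
and, in case (3), to
\begin{equation*}
|R_{\A}|_r \le C\bigl(|G|_r + |G|_{r(q-1)}^{q-1}\bigr) \le C\bigl(|G|_{r(q-1)} + |G|_{r(q-1)}^{q-1}\bigr),
\end{equation*}
where the last inclusion uses that $\Td$ has finite measure and $r(q-1) \ge r$ for $q \ge 2$.

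\medskip

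\noindent It remains to plug in $|G|_s \le |Du_p|_s + |Du_c|_s$ and invoke \eqref{eq:upwp} together with \eqref{eq:ucwp} at $i=1$, which give for $s \in (1,\infty)$
\begin{equation*}
|G|_s \le C_s \bigl(\lambda_2 \mu^{\frac{d}{2} - \frac{d}{s}} + \lambda_2 L(s)\bigr).
\end{equation*}
Substituting $s=r$ in cases (1) and (2), and $s = r(q-1)$ in case (3), yields the claimed bounds \eqref{eq:RA}. The admissibility condition $r>1$ in cases (1)--(2) and $r(q-1)>1$ in case (3) is precisely what is needed to apply Proposition \ref{prop:uc}, which is stated on the open range $(1,\infty)$.

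\medskip

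\noindent The calculation is essentially a bookkeeping exercise; the only minor subtlety is the case $q \ge 2$, where one must separate the superlinear growth factor $1 + |Q|^{q-2} + |P|^{q-2}$ from the Lipschitz factor $|Q-P|$, and choose the right exponent ($s = r(q-1)$) so that a single application of the estimates from Propositions \ref{prop:up}, \ref{prop:uc} suffices and one need not use H\"older with two different summability exponents for the two factors.
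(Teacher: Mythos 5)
Your proposal is correct and follows essentially the same route as the paper: pointwise application of Lemma \ref{lem:gA}, passage to $L^r$ norms via the identity $\||G|^{q-1}\|_r=|G|_{r(q-1)}^{q-1}$ together with Jensen on the finite-measure torus, and then insertion of \eqref{eq:upwp} and \eqref{eq:ucwp} at first order. The only (harmless) cosmetic difference is in the case $q\ge 2$, where you absorb $|Du_0|^{q-2}$ pointwise using smoothness of $u_0$ and split $|Q|^{q-2}\le C(|Du_0|^{q-2}+|G|^{q-2})$, whereas the paper applies H\"older with exponents $q-1$ and $\tfrac{q-1}{q-2}$ and keeps $|Du_1|_{r(q-1)}^{q-2}$; both yield the stated bound.
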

\begin{proof}
By definition \eqref{eq:defRA}, we have \[|R_{\A}| = |\A(Du_0 + Du_p + Du_c) - \A(Du_0)|.\] Therefore the inequality \eqref{eq:a-est-1} gives the pointwise estimate
\[
|R_{\A}| \le \left\{ 
\begin{aligned}
 & C_{\nu_1} |Du_p + Du_c|^{q-1} &\text{ for } \nu_0=0, q\leq 2 \\
 & C_{\nu_0} |Du_p + Du_c| &\text{ for } \nu_0>0, q\leq2 \\
 & C_{q, \nu_0, \nu_1}  |Du_p + Du_c| \left(1 + |Du_0|^{q-2} + |Du_1|^{q-2} \right) &\text{ for } q\ge2
\end{aligned}
\right.
\]
Using Jensen inequality and $\frac{1}{q-1} {\geq} 1$ in the first case, and H\"older inequality with $q-1$, $\frac{q-1}{q-2}$ in the last case, one has
\[
|R_\A|_r \le \left\{ 
\begin{aligned}
 & C |D u_p + Du_c|_r^{q-1} &\text{ for } \nu_0=0, q {\leq} 2 \\
 & C |Du_p + Du_c|_r &\text{ for } \nu_0>0, q {\leq} 2 \\
 & C |Du_p + Du_c|_{r(q-1)} \left(1 + |Du_0|_{r(q-1)}^{q-2} + |Du_1|_{r(q-1)}^{q-2} \right) &\text{ for } q\ge2
\end{aligned}
\right.
\]
%
%
For any $s \in (1, \infty)$ the estimate \eqref{eq:ucwp} controls  $|D u_c|_s$ via $\lambda_2 L(s)$, {whereas $\lambda_2 \mu^{d/2 - d/s}$ controls $|D u_p|_s$ thanks to \eqref{eq:upwp}}. This closes the case $q<2$ of \eqref{eq:RA}. Recalling that $u_1 = u_0 +u_p + u_c$ and that $C$ may contain norms of $u_0$, we obtain the case $q\ge2$.
\end{proof}
\begin{remark}
For the current purpose of proving Proposition \ref{prop:main} and thus Theorem \ref{thm:main}, the case $q \leq 2$ of \eqref{eq:RA} suffices. We included already the case $q\ge2$, because it is needed to prove Theorem \ref{thm:two}.
\end{remark}

Immediately from the definition of $R_{moll}$ in \eqref{eq:defRm} and the choice of $\epsilon$ in \eqref{eq:def-epsilon} we have
\begin{prop}[Estimate on $R_{moll}$] 
It holds
\begin{equation}
\label{eq:Rm}
|R_{moll}(t)|_1 \leq \eta/2 
\end{equation}
where $\eta$ is the parameter appearing in the assumptions of Proposition \ref{prop:main}. 
\end{prop}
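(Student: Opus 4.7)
The plan is to observe that this is a straightforward consequence of the definitions, with the only non-trivial ingredient being that the mollification parameter $\epsilon$ from \eqref{eq:def-epsilon} can indeed be chosen to satisfy the required bound.

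First, unpack $R_{moll}$: by \eqref{eq:defRm} one has $R_{moll}(t) = \mathcal{R}_0^\epsilon(t) - \mathcal{R}_0(t)$, so
\[
|R_{moll}(t)|_1 = |\mathcal{R}_0^\epsilon(t) - \mathcal{R}_0(t)|_1.
\]
The second condition in \eqref{eq:def-epsilon} was imposed precisely to give $|\mathcal{R}_0^\epsilon(t) - \mathcal{R}_0(t)|_1 \leq \eta/2$ for every $t \in [0,1]$, and thus \eqref{eq:Rm} follows.

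What one should check (and this is the only \emph{a priori} non-trivial point) is that the two conditions imposed on $\epsilon$ in \eqref{eq:def-epsilon}, namely $\epsilon \leq \delta/(2^7 d)$ and the smallness of $|\mathcal{R}_0^\epsilon(t) - \mathcal{R}_0(t)|_1$ uniformly in $t$, are simultaneously achievable. This is a standard consequence of the regularity assumed on $R_0$: since $R_0 \in C([0,1] \times \T^d; \mathcal{S})$ by Definition \ref{def:nnr}, after extension to a neighbourhood of $[0,1]$ in time by the boundary values $R_0(\cdot,0)$, $R_0(\cdot,1)$, one has $R_0 \in C(\RR \times \T^d)$ on a neighbourhood of $[0,1]$, hence uniformly continuous on $[0,1]\times\T^d$. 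Standard mollification then yields $\mathcal{R}_0^\epsilon \to \mathcal{R}_0$ uniformly on $[0,1]\times\T^d$ as $\epsilon \to 0^+$, which in particular gives $\sup_{t \in [0,1]} |\mathcal{R}_0^\epsilon(t) - \mathcal{R}_0(t)|_1 \to 0$. So one may fix $\epsilon>0$ small enough that both inequalities in \eqref{eq:def-epsilon} hold; with such a choice the estimate \eqref{eq:Rm} is immediate.

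There is no obstacle here beyond bookkeeping; the proposition is essentially the tautology that the mollification level was selected to meet this bound.
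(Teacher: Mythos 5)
Your proof is correct and follows exactly the paper's reasoning: the bound is immediate from the definition \eqref{eq:defRm} of $R_{moll}$ together with the second condition in \eqref{eq:def-epsilon}. The extra verification that such an $\epsilon$ exists (via uniform continuity of $R_0 \in C([0,1]\times\T^d;\mathcal{S})$ and uniform convergence of mollifications) is a sensible addition but does not change the argument.
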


\subsection{Estimates on the energy increment}\label{ssec:ener}
We intend to approach the desired energy profile $e(t)$, i.e.\ perform the step \eqref{eq:ass_e} $\to$  \eqref{eq:e_contr}. Let us thence define $\delta E$ as follows
\begin{equation}\label{eq:Edef}
\delta E (t):= \Big|e(t) \Big(1 - \frac{\delta}{2} \Big) - \Big( \int |u_1|^2 (t) + 2 \int_0^t \int  \A (D u_1) D u_1 \Big)  \Big|.
\end{equation}
Recall quantities $L$ of \eqref{eq:L}. We will show
\begin{prop}[energy iterate]\label{prop:ei} For $q \in (1, \infty)$ being the growth parameter of $\A$ it holds
\begin{equation}\label{eq:E}
\delta E(t)  \le  \frac{\delta}{16} e(t)  + C_{N} \left(\lambda^{-1}_1 + {\mu^{-\frac{d}{2}}} + L(2) + L(2)^2 + { \lambda_2 \mu^{\frac{d}{2} - \frac{d}{q}} + \big( \lambda_2 \mu^{\frac{d}{2} - \frac{d}{q}} \big)^q  } + \lambda_2 L(q) + \big( \lambda_2 L(q) \big)^q  \right).
\end{equation}
\end{prop}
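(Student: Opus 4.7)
The plan is to expand $\int|u_1|^2$ and $2\int_0^t\int \A(Du_1)Du_1$ around the base $(u_0, Du_0)$, identify $\int|\vp|^2$ as the only non-small new contribution, and show that $\int|\vp|^2$ reproduces $d\gamma_0(t) = e(t)(1-\delta/2) - \bigl(\int|u_0|^2 + 2\int_0^t\int \A(Du_0)Du_0\bigr)$ up to errors of the form appearing on the right-hand side of \eqref{eq:E}.

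First I would analyse $\int|\vp|^2$. By the pairwise disjoint supports of the $W^k$, $\int|\vp|^2 = \sum_k \int a_k^2 |W^k|^2$. Splitting $|W^k|^2 = \dashint|W^k|^2 + \Po |W^k|^2$, property \eqref{eq:mean-value-mikado} bounds $\bigl|\dashint|W^k|^2 - |k|^2\bigr|$ by $M\lambda_1\mu/\lambda_2$, while the oscillatory part $a_k^2 \Po|W^k|^2$ contributes $O(\lambda_1^{-1})$ by Proposition \ref{prop:mosc}. Taking the trace of \eqref{eq:Rp2m} and using that $\cR_0^\epsilon$ is traceless yields the crucial algebraic identity $\sum_k a_k^2 |k|^2 = d\varrho$, and therefore $\int|\vp|^2 = d\int\varrho + C_N(\lambda_1^{-1} + \lambda_1\mu/\lambda_2)$. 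The definition \eqref{eq:motiv_rho} of $\varrho$ combined with \eqref{eq:def-epsilon} and \eqref{eq:Rdelta-epsilon} then gives
\[
\Big| d\int\varrho - d\gamma_0(t) \Big| \;=\; 2d\int\sqrt{\epsilon^2 + |\cR_0^\epsilon|^2} \;\le\; \frac{\delta}{2^5} \;\le\; \frac{\delta}{16}\,e(t),
\]
where the last inequality uses $e(t) \ge 1/2$.

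Next, the remaining kinetic contributions $2\int u_0\cdot\vp + 2\int u_0\cdot\vc + 2\int\vp\cdot\vc + \int|\vc|^2$ are dispatched as follows: $2\int u_0\cdot\vp$ is $O(\lambda_1^{-1})$ by Proposition \ref{prop:mosc} since $\vp$ is $\lambda_1$-oscillatory with a smooth amplitude; the three corrector terms are bounded by Cauchy--Schwarz together with $|\vp|_2 \le M\delta^{1/2} + C\lambda_1^{-1/2}$ from \eqref{eq:uplp2} and $|\vc|_2 \le C_N L(2)$ from \eqref{eq:ucwp}, yielding the $C_N(L(2) + L(2)^2)$ contribution. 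For the dissipative difference, Lemma \ref{lem:gA} furnishes the pointwise bound
\[
|\A(Du_1)Du_1 - \A(Du_0)Du_0| \le C\bigl(1 + |Du_0|^{q-1} + |Du_1|^{q-1}\bigr)|D\vp + D\vc|,
\]
and integrating in space--time via H\"older with exponents $q$ and $q/(q-1)$, using $|Du_1|_q \le |Du_0|_q + |D\vp|_q + |D\vc|_q$ together with the gradient bounds \eqref{eq:upwp} and \eqref{eq:ucwp} at exponent $q$, produces exactly $\lambda_2\mu^{d/2 - d/q} + (\lambda_2\mu^{d/2 - d/q})^q + \lambda_2 L(q) + (\lambda_2 L(q))^q$ from \eqref{eq:E}; all three cases of \eqref{eq:a-est-3} are majorised by this collection. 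Assembling the three steps and using the cancellation of $d\gamma_0$ against $\int|u_0|^2 + 2\int_0^t\int\A(Du_0)Du_0$ inside $\delta E(t)$ delivers \eqref{eq:E}.

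The main obstacle will be the precise matching $\int|\vp|^2 \approx d\int\varrho \approx d\gamma_0$: it rests on the traceless-ness of $\cR_0^\epsilon$ (for the algebraic identity $\sum_k a_k^2 |k|^2 = d\varrho$) and on the calibration of $\varrho = 2\sqrt{\epsilon^2+|\cR_0^\epsilon|^2} + \gamma_0$, which is chosen precisely so that the slack $2d\int \sqrt{\epsilon^2 + |\cR_0^\epsilon|^2}$ is controlled by $\delta/32 \le (\delta/16)\,e(t)$ rather than by an uncontrolled multiple of $\delta$. Once this pivotal identification is secured, every residual error is a direct consequence of the estimates already collected in Sections \ref{ssec:u} and \ref{ssec:R}.
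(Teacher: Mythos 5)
Your proposal is correct and follows essentially the same route as the paper's proof: taking the trace of the decomposition to identify $\int|u_p|^2$ with $d\varrho$ up to $O(\lambda_1^{-1}+\lambda_1\mu/\lambda_2)$, calibrating $\varrho$ via \eqref{eq:def-epsilon} and \eqref{eq:Rdelta-epsilon} so the slack $2d\int\sqrt{\epsilon^2+|\cR_0^\epsilon|^2}$ fits under $\frac{\delta}{16}e(t)$, Cauchy--Schwarz with \eqref{eq:uplp2} and \eqref{eq:ucwp} for the corrector cross terms, and \eqref{eq:a-est-3} plus H\"older for the dissipative difference. The only (harmless) divergence is the term $2\int u_0\cdot u_p$: the paper bounds it by $|u_0|_\infty|u_p|_1\le C\mu^{-d/2}$, which is precisely the source of the $\mu^{-d/2}$ on the right-hand side of \eqref{eq:E}, whereas you invoke Proposition \ref{prop:mosc}; that works too, but only after splitting off the small nonzero mean of $W^k$ (Proposition \ref{prop:mosc} requires a null-mean oscillating factor), and either way the contribution is majorised by the stated right-hand side.
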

\begin{proof}
Recall \eqref{eq:Rp3}. Taking its trace and recalling that $\cR_0^\epsilon $ is traceless we have
\begin{equation}\label{eq:el1}
  |\vp|^2  =  d \varrho 
  +\sum_{k \in K} a^2_k \Po |W^k|^2+ a_k^2 \Big( \dashint |W^k|^2  - |k|^2\Big).
\end{equation}
By the definition \eqref{eq:motiv_rho} it holds
$d\varrho = 2 d \sqrt{\epsilon^2 + |\cR_0^\epsilon|^2} + d\gamma_0$, therefore
\[
|\vp|^2  -d \gamma_0 =  2d\sqrt{\epsilon^2 + |\cR_0^\epsilon|^2}+\sum_{k \in K} a^2_k \Po |W^k|^2 + a_k^2 \Big( \dashint |W^k|^2 - |k|^2 \Big)
\]
Integrating and using  $\sqrt{\epsilon^2 +|x|^2} \le \epsilon +|x|$, 
we have 
\begin{equation}\label{eq:el1ip}
\Big| \int   |\vp|^2  - d \gamma_0 \Big|
\le 2d \epsilon + 2d |\cR^\epsilon_0(t)|_1   +\sum_{k \in K} \Big| \int a^2_k \Po |W^k|^2 \Big|  +  |a_k|_2^2 \Big| \dashint |W^k|^2 - |k|^2  \Big|
\end{equation}
We estimate the first two terms of the r.h.s.\ of \eqref{eq:el1ip} using  \eqref{eq:def-epsilon} and \eqref{eq:Rdelta-epsilon} as follows
\[
2d \epsilon + 2d |\cR^\epsilon_0(t)|_1  \le  \frac{\delta}{2^6} + \frac{\delta}{2^6} \le  \frac{\delta}{2^4} e(t),\]
where in the second inequality we used the assumption $e(t) \ge \frac12$. This in \eqref{eq:el1ip} yields

\begin{equation}\label{eq:el1i}
\Big| \int |\vp|^2  - d \gamma_0 \Big|
\le \frac{\delta}{2^4}e(t)  +\sum_{k \in K} \Big| \int a^2_k \Po |W^k|^2 \Big|  + |a_k|_2^2 \Big| \dashint |W^k|^2 - |k|^2  \Big| \end{equation}
The first integral of r.h.s.\ of \eqref{eq:el1i} involves a $\lambda_1$-oscillating function $\Po |W^k|^2$, recall Proposition \ref{prop:cltmik}. Therefore, using \eqref{eq:mosc}, then \eqref{eq:chica2} to control $a_k$ and \eqref{eq:fdc} for $W^k$, we have
\begin{equation}
\label{eq:el1ia}
\Big| \int a^2_k \Po |W^k|^2 \Big| \le C   \lambda_1^{-1} |a^2_k|_\infty |(\Po |W^k|)|^2_2 \le  C   \lambda_1^{-1}.
\end{equation}
For the integral following the second sum in \eqref{eq:el1i}, we use \eqref{eq:mean-value-mikado} and \eqref{eq:chica2} to get
\begin{equation}
\label{eq:el1ib}
 |a_k|_2^2 \Big| \dashint |W^k|^2 - |k|^2  \Big|  \leq C \frac{\lambda_1 \mu}{\lambda_2}. 
\end{equation}
We plug \eqref{eq:el1ia} and \eqref{eq:el1ib} to  \eqref{eq:el1i} and obtain
\begin{equation}
\label{eq:el3}
\Big| \int  |\vp|^2  - d \gamma_0 \Big| \le \frac{\delta}{2^4} e(t) + C \Big( \frac{1}{\lambda_1}   + \frac{\lambda_1 \mu}{\lambda_2} \Big).
\end{equation}

Use $u_1 = u_0 + u_p + u_c$ in the definition \eqref{eq:Edef} of $E$ to write for the time instant $t$
\[
\delta E(t) \le \Big| \int |u_p|^2 - d \gamma_0  \Big|+ \Big| \int  |u_c|^2 + 2 (u_0 u_c + u_0 u_p + u_p u_c)  \Big| + 2  \Big|\int_0^t \int  \A (D u_1) D u_1 - \A (D u_0) D u_0 \Big|,
\]
where a cancellation occurs, thanks to the definition \eqref{eq:gnot} of  $\gamma_0$. Inequality \eqref{eq:el3} allows to control the first term of the r.h.s.\ above. For the last term we use  \eqref{eq:a-est-3}, next H\"older inequality with $q$, $\frac{q}{q-1}$, and finally $u_1 = u_0 +u_p + u_c$ to get
\begin{equation*}
\int | \A (D u_1) D u_1 - \A (D u_0) D u_0| 
\leq C \left(   |Du_p + Du_c|_q + |Du_p + Du_c|_q^q \right).
\end{equation*}
Thus, integrating in time over $[0,t] \subseteq [0,1]$,
\begin{equation*}
\int_0^t \int   |\A (D u_1) D u_1 - \A (D u_0) D u_0| \leq C \sup_{\tau \in [0,1]}  \left(   |Du_p(\tau) + Du_c(\tau)|_q + |Du_p(\tau) + Du_c(\tau)|_q^q \right).
\end{equation*}
Consequently
\[
\begin{aligned}
\delta E(t)  & \le \frac{\delta}{2^4} e(t) + C \Big( \frac{1}{\lambda_1}   + \frac{\lambda_1 \mu}{\lambda_2}  \Big)\\
&+ C \left(|u_c|_2^2  + |u_0|_2 |u_c|_2 +  |u_0|_\infty |u_p|_1 +  |u_c|_{2} |u_p|_2 \right) (t) \\
& + C \sup_{\tau \in [0,1]}  \left(   |Du_p(\tau) + Du_c(\tau)|_q + |Du_p(\tau) + Du_c(\tau)|_q^q \right).
\end{aligned}
\] 
The terms in the second line above are estimated, using \eqref{eq:ucwp} for $u_c$ and \eqref{eq:uplp<} for $u_p$,  by $C_{N} ( L(2)^2 + L(2) + \mu^{-d/2})$. Observe that  $L(2)$ of this term can absorb  $\lambda_2^{-1} \lambda_1 \mu$ of the first line. The terms in the last line are estimated by ${ \lambda_2 \mu^{\frac{d}{2} - \frac{d}{q}} + \big( \lambda_2 \mu^{\frac{d}{2} - \frac{d}{q}} \big)^q  } + \lambda_2 L(q) + (\lambda_2 L(q))^q$, using \eqref{eq:upwp} for $Du_p$, \eqref{eq:ucwp} for $Du_c$.  We thus arrived at \eqref{eq:E}.
\end{proof}


\section{Proof of the main Proposition \ref{prop:main}}\label{sec:proppf}
Having at hand the estimates of the previous section, we are ready to show that $(u_1, q_1, R_1)$ constructed in Section \ref{sec:step} satisfy the inequalities \eqref{eq:mp_up} -- \eqref{eq:e_contr}. 

The estimates of the previous section have at their right-hand sides two type of terms: ones where the parameters $\lambda_2, \lambda_1$, $\mu$, $\omega$ are intertwined, and the remaining 
ones. These remaining ones can be made small simply by choosing the relevant parameters large. The terms with $\lambda_2, \lambda_1$, $\mu$, $\omega$ interrelated need more care, so let us focus on them. They contain two little technical nuisances: (i) appearance of $N$ and (ii) estimates for some parts of $R$ not holding in $L^1$. Let us ignore these nuisances for a moment, which is easily acceptable after recalling (i) $\idivn_\infty$ of Remark \ref{rem:ia} (which heuristically cancels the terms involving $N$) and that (ii) estimates for $R$ hold in $L^{r}$ for any $r>1$, whereas an $\epsilon$ of room is assured by the assumed sharp inequality $q<\frac{2d}{d+2}$. So for a moment let us consider estimates of Section \ref{sec:est} allowing for $|R|_1$ and disregarding the terms with $N$. After inspection, we see that smallness of their right-hand sides where  $\lambda_2, \lambda_1$, $\mu$, $\omega$ are intertwined, needed for Proposition \ref{prop:main} is precisely the smallness of \eqref{eq:comparison-mikado}, Remark \ref{rmk:comparison-mikado}. 
Therefore we will proceed as follows. 

Firstly, guided by \eqref{eq:comparison-mikado}, we will choose relation between magnitudes of $\lambda_2, \lambda_1$, $\mu$, $\omega$. To this end we postulate 
\begin{equation}\label{eq:mag}
\lambda_1 := \lambda, \quad \mu := \lambda^a, \quad \omega := \lambda^b, \quad \lambda_2 := \lambda^c,
\end{equation}
and choosing relation between magnitudes means picking $a,b,c$ so that \eqref{eq:comparison-mikado} are strictly decreasing in $\lambda$.

Secondly, we will need to make sure that when $r>1$ and $N$ appear, the relations between magnitudes do not change. This will be achieved by choosing $N$ large and $r$ small in relation to $a,b,c$.

Finally, we will send $\lambda \to \infty$ to reach \eqref{eq:mp_up} -- \eqref{eq:e_contr}.

\subsection{Picking magnitudes $a,b,c$}\label{ssec:mag}
The requirement  that powers in \eqref{eq:comparison-mikado} rewritten in terms of 
\eqref{eq:mag} are negative reads 
\begin{equation}\label{eq:conp}
\begin{aligned}
\frac{\lambda_1 \mu}{\lambda_2} & = \lambda^{1+a-c} &&  i.e.  & 1+a-c & < 0, \\
\frac{\lambda_1 \mu}{\lambda_2} \, \cdot \, \frac{\omega}{\mu^{d/2}}& = \lambda^{1+(1 - \frac{d}{2})a-c + b} && i.e. & 1+ \Big(1 - \frac{d}{2}\Big)a-c + b & < 0, \\
\frac{\mu^{d/2}}{\omega} & = \lambda^{ad/2-b} &&  i.e.  & \frac{d}{2}a -b & < 0, \\
\lambda_2 \mu^{\frac{d}{2} - \frac{d}{q}} & = \lambda^{c + (\frac{d}{2} - \frac{d}{q})a} &&  i.e.  & c - \Big( \frac{d}{q} -\frac{d}{2}  \Big)a & < 0.
\end{aligned}
\end{equation}
These conditions on $a,b,c$ can be simultaneously  achieved as follows. 
\begin{enumerate}
\item The conditions not involving $b$ amount to the requirement 
\begin{equation}\label{eq:choice-c}
1+ a < c< \Big( \frac{d}{q} -\frac{d}{2} \Big)a.
\end{equation}
From the assumption $q < \frac{2d}{d+2}$ of Proposition \ref{prop:main} it follows that $d/q - d/2 > 1$. Therefore satisfying \eqref{eq:choice-c} is possible with $a$ large. More precisely, let us  pick 
\begin{equation}
\label{eq:choice-a}
a > \frac{3}{d \big( \frac{1}{q} - \frac{d+2}{2d} \big)}.
\end{equation}
Then between $1 + a$ and $(d/q - d/2)a$ there are at least two natural numbers. We then fix $c \in \N$ as the largest natural number satisfying \eqref{eq:choice-c}.
Notice that there is still at least one natural number between $1+a$ and $c$.
\item Let us fix $b \in \N$ so that
\begin{equation}
\label{eq:choice-b}
\frac{d}{2}a < b< \Big( \frac{d}{2} - 1 \Big) a + c -1.
\end{equation}
This is possible, because, as observed in point (1), there is at least one natural number between $1+a$ and $c$ and thus also between $ad/2$ and $(d/2-1)a + c -1$. The condition \eqref{eq:choice-b} automatically verifies the two conditions concerning $b$. 
\end{enumerate}

Let us denote by  $-\zeta<0$ the largest power of those appearing in \eqref{eq:conp}. We have just showed
\begin{equation}\label{eq:uei}
 \frac{\mu^{d/2}}{\omega} \le \lambda^{-\zeta}, \quad \lambda_2 \frac{\mu^{d-\frac{d}{q}}}{\omega} \le \lambda^{-\zeta}, \quad \frac{\lambda_1 \mu}{{\lambda_2}} \le \lambda^{-\zeta},   \quad  {\lambda_2}  \mu^{\frac{d}{2} - \frac{d}{q}} \le \lambda^{-\zeta},  \quad  \frac{\lambda_1 \mu}{{\lambda_2}} \cdot \frac{\omega}{\mu^{d/2}} \le \lambda^{-\zeta}.
\end{equation}

\subsection{Fixing $N$ and $r_0>1$}\label{ssec:nr}
Let us fix $N \in \N$ so that
\begin{equation}
\label{eq:choice-N}
c - \zeta N \le 0.
\end{equation}
This choice yields
\begin{equation}
\label{eq:tail-N}
1 + \lambda_2^2 \Big( \frac{\lambda_1 \mu}{\lambda_2} \Big)^N \le  1 + \lambda^{2c - 2 \zeta N} \leq 2.
\end{equation}
Using the definition \eqref{eq:L} of $L$ with \eqref{eq:uei} and \eqref{eq:tail-N}, one has
\begin{equation}
\label{eq:l-tildel-neg}
L(q) \leq C\lambda^{-\zeta}, \quad L(2) \leq C\lambda^{-\zeta}, \quad \lambda_2 L(q) \leq C\lambda^{-\zeta}.
\end{equation}
Importantly, fixing the gauge $N$ freezes all $C_N$'s in estimates to $C$. 

Let us fix also an exponent $r_0 \in (1, \infty)$ (close to $1$) such that
\begin{equation}
\label{eq:choice-r}
\begin{aligned}
\Big( d - \frac{d}{r_0} \Big) a \le \frac12.
\end{aligned}
\end{equation}
This is possible because the l.h.s. above vanishes as $r_0 \to 1$. 
%

\subsection{Obtaining \eqref{eq:mp_up} -- \eqref{eq:e_contr}}\label{ssec:pineq}
Recall that $\delta, \eta$ are given small numbers. Since $u_1-u_0 = \vp + \vc$, we  have by \eqref{eq:uplp2} and \eqref{eq:ucwp}
\begin{equation}\label{eq:finlp}
|(u_1-u_0)(t)|_{L^2} \leq M \delta^\frac12 + C \big( \lambda_1^{-1/2} + L(2) \big)  \leq M \delta^\frac12 + C \big( \lambda^{-1/2} + \lambda^{-\zeta} \big), 
\end{equation}
recalling for the latter inequality that $ \lambda_1 =  \lambda$ via \eqref{eq:mag}, and \eqref{eq:l-tildel-neg}.
Choose $\lambda \in \N$ large in relation to $C$ we thus have
\[
|(u_1-u_0)(t)|_{L^2} \leq 2M \delta^\frac12 = M_0 \delta^\frac12,
\]
defining $M_0 := \frac{M}{2}$, hence \eqref{eq:mp_up}. Notice that $M_0$ depends only on the universal constant $M$. Thus $M_0$ itself is universal, i.e.\ it may depend on the quantities \eqref{eq:uniform-obj}, but not on the  quantities \eqref{eq:obj-no-written}.

Similarly to obtaining \eqref{eq:finlp}, using \eqref{eq:upwp},  \eqref{eq:L} and \eqref{eq:ucwp} we have
\begin{equation}\label{eq:finwp}
|u_1 - u_0|_{W^{1,q}} 
 \leq C \big(   \lambda_2 \mu^{d/2 - d/q} + L(q) +  \lambda_2 L(q)  \big) \le C  \lambda^{-\zeta}, 
\end{equation}
where for the term $\lambda_2 \mu^{d/2 - d/q}$ we used \eqref{eq:uei}. Estimate \eqref{eq:mp_uwp} follows by choosing $\lambda$ big enough. 
%

Recall that $R_1 = - (R_{lin} +  R_{corr} +   R_{quadr} + R_{moll} + R_{\A})$ by its definition \eqref{eq:Rsplit}. By \eqref{eq:Rlinest} we have, with $C_N$ now fixed to $C$ by the choice \eqref{eq:choice-N} of $N$
\begin{equation}
\label{eq:rlin-exp}
|R_{lin}|_1 (t) \le C  \bigg[ \mu^{-\frac{d}{2}}  + 
\frac{  \lambda_1  \mu^{1-\frac{d}{2}} \omega  }{\lambda_2} \Big( 1 + \lambda_2^2 \Big(\frac{\lambda_1 \mu}{\lambda_2} \Big)^{N} \Big)  \bigg] \leq C(\lambda^{-ad/2} + \lambda^{-\zeta} 2),
\end{equation}
where for the second inequality we invoked $\mu = \lambda^a$ by \eqref{eq:mag}, \eqref{eq:uei}, and  \eqref{eq:tail-N}.

Similarly, using \eqref{eq:Rc1} and \eqref{eq:l-tildel-neg}
\begin{equation}
\label{eq:rc-exp}
|R_{corr} (t)|_1 \le  C \lambda^{-\zeta}.
\end{equation}

For the $L^1$-estimate of $R_{quadr}$ we need to switch to the $L^{r_0}$ estimate, where $r_0$ was fixed in \eqref{eq:choice-r}. We have, using \eqref{eq:Rq}
\begin{equation*}
|R_{quadr}(t)|_1 \leq |R_{quadr}(t)|_{r_0}  \le C ( \omega^{-1} + \lambda^{-1}_1)  \mu^{d-\frac{d}{r_0}}  + C \lambda^{-1} \lambda_1 \mu \leq C( \lambda^{(d-d/r_0)a - b} +\lambda^{(d-d/r_0)a - 1} ) + C \lambda^{-\zeta}.
\end{equation*}
Thanks to the choice of $r_0$ in \eqref{eq:choice-r}, we hence have
\begin{equation}
\label{eq:rq-exp}
|R_{quadr}(t)|_1 \leq C\lambda^{-\frac12}  + C \lambda^{-\zeta}.
\end{equation}
Similarly, for the estimate of $R_\A$ we use \eqref{eq:RA} with  $q \in (1,2)$, obtaining 
\[
\label{eq:R1im}
|R_\A(t)|_1 \le |R_{\A}(t)|_q \le \left\{ 
\begin{aligned}
 & C  \Big(  { \big(\lambda_2 \mu^{\frac{d}{2} - \frac{d}{q}} \big)^{q-1}} +  \lambda_2 L(q) \Big)^{q-1} &\text{ for } \nu_0=0, q<2,\\
 & C \big(  { \lambda_2 \mu^{\frac{d}{2} - \frac{d}{q}} } +  \lambda_2 L(q) \big) &\text{ for } \nu_0>0, q<2.
\end{aligned}
\right.
\]
Therefore by {\eqref{eq:uei}}, \eqref{eq:l-tildel-neg} and $q-1 \in (0,1)$
\begin{equation}
\label{eq:ra-exp}
|R_\A(t)|_1 \le  C \lambda^{-\zeta(q-1)}.
\end{equation}

Together, the terms $R_{lin}, R_{corr}, R_{quadr}, R_\A$ are bounded in view of, respectively, \eqref{eq:rlin-exp},  \eqref{eq:rc-exp},  \eqref{eq:rq-exp}, and \eqref{eq:ra-exp} by $C \lambda^{-\zeta'}$ with certain $\zeta'>0$:
\begin{equation*}
|R_{lin}(t)|_1 + |R_{corr}(t)|_1 + |R_{quadr}(t)|_1 + |R_\A(t)|_1 \leq  C \lambda^{-\zeta'}
\end{equation*}
Therefore, using for the remaining $R_{moll}$ the estimate \eqref{eq:Rm}, we have
\begin{equation}\label{eq:p1fr}
|R_1(t)|_1 \leq  \frac{\eta}{2} + C \lambda^{-\zeta'}.
\end{equation}
thus showing \eqref{eq:mp_R} by taking  $\lambda$ large.

Let us show the last remaining inequality \eqref{eq:e_contr}.  By \eqref{eq:E}, with $C_N$ fixed to $C$ by the choice \eqref{eq:choice-N} of $N$, we have in view of {\eqref{eq:uei} and}  \eqref{eq:l-tildel-neg}
\begin{equation}
\label{eq:e-delta-8}
\delta E(t) \leq \frac{\delta}{16} e(t) + C \left( \lambda^{-1} + {\lambda^{-\frac{ad}{2}}} +  \lambda^{-\zeta} + \lambda^{-2\zeta} + \lambda^{-\zeta} + \lambda^{-q\zeta} \right) \leq \frac{\delta}{16} e(t) + \frac{\delta}{32} \le \frac{\delta}{8} e(t)
\end{equation}

The proof of Proposition \ref{prop:main} is concluded.

\section{Proof of Theorem \ref{thm:main}}\label{sec:pfT1}
We will iterate Proposition \ref{prop:main}. Let us start at the trivial solution $(u_0, \pi_0, R_0) \equiv 0$ with $\delta_0 =1$. At the $n$th step we take $\delta_n := {2^{-n}}$ and $\eta_n := \frac{\delta_{n+1}}{2^8 d}$, hence $|R_{n+1} (t)|_{L^1} \le \eta_{n+1}= \frac{1}{2} \frac{\delta_{n+1}}{2^7 d}$. This and $|\mathring A| \le |A| +  \frac{1}{d} |tr A| |\Id| \le  |A| + \frac{1}{d} \sqrt{d} |A| \sqrt{d}$ give $|\cR_{n+1} (t)|_{L^1} \le \frac{\delta_{n+1}}{2^7 d}$, which is the assumption \eqref{eq:Rdelta} of the step $n+1$. Similarly, for any $t \in [0,1]$ at the $n$-th step we get, by \eqref{eq:e_contr}
\[
\frac{3}{8} \delta_n e(t) \le  e(t) - \Big( \int  |u_{n+1}|^2 (t) + 2 \int_0^t \int  \A (D u_{n+1}) D u_{n+1} \Big) \le \frac{5}{8} \delta_{n} e(t)
\]
which is the assumption \eqref{eq:ass_e} of the step $n+1$, since 
\begin{equation*}
\frac{3}{8} \delta_n e(t) = \frac{3}{4} \delta_{n+1}  e(t),   \qquad \frac{5}{8} \delta_{n} e(t) = \frac{5}{4} \delta_{n+1} e(t).
\end{equation*}
Consequently we obtain iteratively, as $\eta_n \le  2^{-n}$,
\begin{equation}\label{eq:istp}
\begin{aligned}
\sup_{t \in [0,1]} |(u_{n+1} - u_n) (t)|_{L_2} & \le M_0 2^{-n/2}, \\
\sup_{t \in [0,1]}  |(u_{n+1} - u_n)  (t)|_{W^{1,q}} & \le 2^{-(n+1)}, \\
\sup_{t \in [0,1]}  |R_{n+1} (t)|_{L_1} & \le 2^{-(n+1)}
\end{aligned}
\end{equation}
Inequalities \eqref{eq:istp} mean that $\{u_n\}_{n=0}^\infty$ is a Cauchy sequence in $C(L^2) \cap C (W^{1,q})$. Denote its limit by $v \in C(L^2) \cap C (W^{1,q})$. Send $n \to \infty$ in the distributional formulation of \eqref{eq:pR}. In particular, in order to pass to the limit in the dissipative term, take a test function $\varphi$ and use \eqref{eq:a-est-1} for $q< \frac{2d}{d+2} <2$ and
the H\"older inequality to obtain 
\begin{equation*}
\left| \int_0^t \int \left[ \A(Du_n) - \A(Dv) \right] \nabla \varphi dxdt \right| \le C
 \left\{ 
\begin{aligned}
 & |\nabla \varphi|_{L^1_t L^q_x}    \sup_{t \in [0,1]} |Du_n - Dv|_{L^q}^{q-1}(t)&\text{ for } \nu_0=0, \\
 & |\nabla \varphi|_{L^1_t L^{q'}_x}    \sup_{t \in [0,1]} |Du_n - Dv|_{L^q}(t) &\text{ for } \nu_0>0.
\end{aligned}
\right.
\end{equation*}
The right-hand sides tend to $0$ as $n \to \infty$ thanks to \eqref{eq:istp}. Consequently we see that $v$ satisfies the distributional formulation of \eqref{eq:pnse}.

For the $2\int_0^t \int \A (D u_n) D u_n$ term of energy we use \eqref{eq:a-est-3} to write
\[
\int_0^t \int |\A(Du_n)Du_n - \A(Dv)Dv | \leq C\int_0^t \int  \big(1 + |Du_n|^{q-1} + |Dv|^{q-1} \big) |Du_n-Dv|.
\]
which via H\"older inequality and \eqref{eq:istp} allows to pass with $n \to \infty$. This and $\lim_{n \to \infty} |u_{n} - v|_{C(L^2)} = 0$ provided by \eqref{eq:istp}   yields \eqref{eq:epr}.

Let us now focus on proving the last part of Theorem \ref{thm:main}, i.e.\ the non-uniqeness statement. Let us take the two energy profiles $e_1, e_2$ and the respective triples $(u^1_n, \pi^1_n, R^1_n)$ and $(u^2_n, \pi^2_n, R^2_n)$ of our convex integration scheme (in what follows, superscripts denote the cases of $e_1, e_2$, respectively). At each iteration step $n \to n+1$ one picks value of $\lambda^i_n$ ($=\lambda$ of Section \ref{ssec:pineq}) that works for $(u^i_n, \pi^i_n, R^i_n)$. Observe that choosing $\bar \lambda_n = \max{(\lambda_n^1, \lambda_n^2)}$ works simultaneously for both triples. Thus, without renaming the triples, let us make the choice $\bar \lambda_n$ for both $(u^i_n, \pi^i_n, R^i_n)$, $i=1,2$. It results in using identical Mikado flows $W^k$ for both iterations. 

Now we want to inductively argue that, thanks to the assumed $e_1(t) = e_2(t)$ for $t \in [0, T]$, it holds $u^1_{n} (t) = u^2_n (t)$ for every $n$ and $t \in [0, T-\frac{1}{2^7 d}]$. Let us assume thence that $u^1_{n} (t)= u^2_n(t)$ and $\cR_n^1 (t)= \cR_n^2 (t)$ for times $t \in [0, T - \sum_{i=0}^n \frac{2^{-i}}{2^7 d}]$ (This holds for $n=0$, since we begin with the zero triple). Formula \eqref{eq:def-ak}, with \eqref{eq:motiv_rho} and \eqref{eq:gnot} shows that $a^i_{k,n+1} (t)$ (i.e.\ every $a^i_k(t)$, $k \in K$  at the step $n \to n+1$) depends on $e(t)$, $\cR_n^{i,\epsilon} (t)$ and $u^i_{n|[0,t]}$, with $\epsilon \le \frac{2^{-(n+1)}}{2^7 d}$ being the mollification parameter, cf.\ \eqref{eq:def-epsilon} with the choice $\delta_{n+1} := {2^{-(n+1)}}$, and the $u^i_n$-dependence being nonlocal due to the dissipative term in \eqref{eq:gnot}. So by our inductive assumption we see that $a^1_{k,n+1} (t) = a^2_{k,n+1}(t)$ for times $t \in [0, T - \sum_{i=0}^{n+1} \frac{2^{-i}}{2^7 d}]$. Consequently, via the definition \eqref{eq:cvp}, the principal perturbations $\vp^i (t)$, $i=1,2$ at the step $n \to n+1$ are identical for times $t \in [0, T - \sum_{i=0}^{n+1} \frac{2^{-i}}{2^7 d}]$. Therefore $u^1_{n+1} (t)= u^2_{n+1} (t)$ and $\cR_{n+1}^1 (t)= \cR_{n+1}^2 (t)$ for $t \in [0, T - \sum_{i=0}^{n+1} \frac{2^{-i}}{2^7 d}]$, since the correctors and the new errors are defined pointwisely in time.

Under the assumption that  $e_1, e_2$ are identical on $[0, T]$, we produced iteratively $u^1_{n} (t), u^2_n (t)$  that agree for $t \in [0, T - \frac{1}{2^7 d}]$ thus also their limits satisfy $v^1 (t)\equiv v^2 (t)$ for $t \in [0, T - \frac{1}{2^7 d}]$.

Replacing $T - \frac{1}{2^7 d}$ with any fixed number $T_1$ strictly smaller than $T$ requires only mollifying at the scales below $T-T_1$ instead of $\frac{1}{2^7 d}$.

\section{Sketch of the proof of Theorem \ref{thm:two}}\label{sec:pfT2}
Let us indicate changes needed in proofs of Proposition \ref{prop:main} and Theorem \ref{thm:main} to reach Theorem \ref{thm:two}. Now, we extend the allowed range of growths of $\A$ to $q \in (1, \frac{3d+2}{d+2})$ at the cost of abandoning the control over the dissipative  term $2 \int_0^t \int  \A (D v) D v$ of the energy. Recall that $r \in (\max\{1, q-1\}, \frac{2d}{d+2})$ is an additional exponent, fixed in the assumptions of Theorem \ref{thm:two}. The main observation is that 
\[
q-1 \le r <\frac{2d}{d+2}
\]
is subcritical in the sense of choices made in section \ref{ssec:mag}.

Let us first consider modifications in proof Proposition \ref{prop:main}. Replacing in sections \ref{ssec:mag}, \ref{ssec:nr} $q$ of Proposition \ref{prop:main} with $r$ implies the following analogue of   \eqref{eq:l-tildel-neg}:
\begin{equation*}
L(r) \leq C\lambda^{-\zeta}, \quad L(2) \leq C\lambda^{-\zeta}, \quad \lambda_2 L({r}) \leq C\lambda^{-\zeta}.
\end{equation*}
for some positive $\zeta>0$. Consequently, \eqref{eq:finwp} holds now also with $r$ in place of $q$. Next, since $q$ now may exceed $2$, to control $|R_\A(t)|_1$ we use the entire \eqref{eq:RA} to write
\[
|R_\A(t)|_1 \le \left\{ 
\begin{aligned}
 & |R_{\A}(t)|_{r}  \le C  \Big(  {\big( \lambda_2 \mu^{\frac{d}{2} - \frac{d}{r}} \big)^{q-1}}  +  (\lambda_2 L(r))^{q-1} \Big) &\text{ for } \nu_0=0, \;q\le2,\\
 & |R_{\A}(t)|_{r}  \le C \big(   {\lambda_2 \mu^{\frac{d}{2} - \frac{d}{r}}} +  \lambda_2 L(r) \big) &\text{ for } \nu_0>0, \;q\le2,\\
 & |R_{\A}(t)|_{{1}}  \le   C \Big( { \lambda_2 \mu^{\frac{d}{2} - \frac{d}{q-1}}} +   { \big( \lambda_2 \mu^{\frac{d}{2} - \frac{d}{q-1}} \big)^{q-1}} \\
 & \qquad \qquad \qquad \qquad  +  \lambda_2 L(q-1) +  (\lambda_2 L(q-1))^{q-1} \Big)&\text{ for } q > 2.
\end{aligned}
\right.
\]
In any of these cases, right-hand sides are controlled by {powers of $\lambda_2 \mu^{\frac{d}{2}-\frac{d}{r}}$ and $\lambda_2 L(r)$}, therefore we can reach \eqref{eq:p1fr}. Finally, since we abandon the control over the dissipative term in the energy inequality, \eqref{eq:gnot} and $\delta E$ of \eqref{eq:Edef} (let us rename it to $\delta \tilde E$) 
loose their dissipative terms. The consequence of the latter is that \eqref{eq:E} simplifies to 
\begin{equation}
\label{eq:E2}
\delta \tilde E(t)  \le \frac{\delta}{16} e(t) + C_N \big({\lambda^{-1}_1} + {\mu^{-\frac{d}{2}}} + L(2) + L(2)^2 \big).
\end{equation}
 Inequality \eqref{eq:E2} allows to prove \eqref{eq:e-delta-8} for $\delta \tilde E$ as in Section \ref{sec:proppf}. 
 
 The above modifications allow to prove Theorem \ref{thm:two} along Section \ref{sec:pfT1} with the difference that now $\{u_n\}_{n=0}^\infty$ forms a Cauchy sequence in $C(L^2) \cap C (W^{1,r})$ and, in the case $q >2$, we pass to the limit in the dissipative term via \eqref{eq:a-est-1} for $q \ge 2$ and the H\"older inequality that give
 \[
 \Big| \int_0^t \int [ \A(Du_n) - \A(Dv) ] \nabla \varphi  \Big| 
 \le C |Du_n-Dv|_{L^{q-1}}  \big(1 + |Du_n|^{q-2}_{L^{q-1}} + |Dv|^{q-2}_{L^{q-1}} \big).
 \]
%
%
\section{Sketch of the proof of Theorem \ref{thm:exist}} 
\label{sec:pfT3}

Let us first introduce the following modification of Definition \ref{def:nnr} 
\begin{defi}
\label{def:cnnr}
Fix $a \in L^2 (\T^d)$. A solution to {\em{the Non-Newtonian-Reynolds Cauchy problem}} is a triple $(u,\pi,R)$ where 
\begin{equation*}
u \in L^\infty(L^2) \cap L^q (W^{1,q}), \quad \pi \in \mathcal{D}, \quad R \in L^1
\end{equation*}
with spatial null-mean $u$, solving the Cauchy problem
\begin{equation}\label{eq:cpR}
\begin{aligned}
\pat u + \divv (u\otimes u)  - \divv \A (\Du) +\nabla \pi&= -\divv \cR, \\
\divv u &= 0, \\
u(0) &= a,
\end{aligned}
\end{equation}
in the sense of distributions, where the data are attained in the weak $L^2$-sense.
\end{defi}

The drop of regularity between objects of Definition \ref{def:cnnr} and objects of Definition \ref{def:nnr} stems from a different starting point for our iterations. To prove Theorems \ref{thm:main}, \ref{thm:two}, we started the iteration at the smooth triple $(u_0, \pi_0, R_0)$ = $(0,0,0)$ and added smooth perturbations in each iteration. To prove Theorem \ref{thm:exist} we will start iterations with $(v_a, \tilde \pi_a, - v_a \otimes v_a)$, where $v_a,  \tilde \pi_a$ solves the Cauchy problem of a non-Newtonian-Stokes system:
\begin{equation}\label{eq:pse}
\begin{aligned}
\pat v_a   - \divv \A( D  v_a)+\nabla \tilde\pi_a &= 0 \\
\divv v_a &= 0 &  \\
v_a (0) &= a
\end{aligned}
\end{equation}

 The smoothness of such $v_a, \tilde \pi_s$ is in general false, so even though at each step we again add smooth perturbations, the regularity at each step cannot be better than that of $v_a, \tilde \pi_a$ solving \eqref{eq:pse}.

The starting point of our iterations is given by
\begin{prop}[Leray-Hopf solutions for non-Newtonian Stokes]\label{prop:nns}
Fix $a \in L^2 (\T^d)$. There is 
\[
v_a \in L^\infty( L^2) \cap L^q (W^{1,q}), \quad \tilde \pi_a \in \mathcal{D}
\]
solving the Cauchy problem \eqref{eq:pse} in the sense of distributions, so that $\lim_{t \to 0} |v_a (t) - a|_2 = 0$ and 
\[
 \int_\Td  |v_a|^2 (t)  + 2 \int_0^t \int_\Td   \A (D v_a) D v_a \le  \int_\Td   |a|^2.
\]
\end{prop}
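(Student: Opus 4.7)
The plan is classical monotone-operator/Galerkin existence for this parabolic system without convective nonlinearity, where $Dv \mapsto \A(Dv)$ is monotone with $q$-growth. Choose an $L^2$-orthonormal basis $\{w_j\}_{j\in\N}$ of mean-zero divergence-free vector fields on $\T^d$ (e.g.\ eigenfunctions of the Stokes operator), set $V_n:=\mathrm{span}\{w_1,\dots,w_n\}$, $a_n:=P_n a$, and seek $v_n\in C^1([0,1];V_n)$ satisfying the Galerkin ODE
\begin{equation*}
(\partial_t v_n,w_j)+\int_{\Td}\A(Dv_n):Dw_j=0,\qquad v_n(0)=a_n,\quad j=1,\dots,n.
\end{equation*}
Testing against $v_n$ and using $\A(Q):Q\gtrsim |Q|^q - C$ yields
\begin{equation*}
\sup_{[0,1]}|v_n|_{L^2}^2+\int_0^1 |D v_n|_{L^q}^q\,dt\;\le\; C(|a|_{L^2}^2+1),
\end{equation*}
which gives global ODE existence and uniform bounds in $L^\infty(L^2)\cap L^q(W^{1,q})$.

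From the growth $|\A(Q)|\lesssim 1+|Q|^{q-1}$, $\A(Dv_n)$ is bounded in $L^{q'}(L^{q'})$, and the equation then bounds $\partial_t v_n$ in $L^{q'}(W^{-1,q'}_\sigma)$. Up to subsequences,
\begin{equation*}
v_n \overset{*}{\rightharpoonup} v \text{ in } L^\infty(L^2),\quad v_n \rightharpoonup v \text{ in } L^q(W^{1,q}),\quad \A(Dv_n) \rightharpoonup \chi \text{ in } L^{q'}(L^{q'}),
\end{equation*}
with $v_n\to v$ strongly in $L^q(L^q)$ by Aubin--Lions, and pointwise-in-time weak $L^2$ convergence along a further diagonal subsequence. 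In particular $v(0)=a$ weakly in $L^2$.

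The main step is to identify $\chi=\A(Dv)$ via Minty's trick. Monotonicity $(\A(P)-\A(Q)):(P-Q)\ge 0$ of the power-law tensor (classical, and explicit for \eqref{eq:orl}) gives, for every smooth divergence-free $\psi$,
\begin{equation*}
\int_0^1\!\!\int_{\Td}\bigl(\A(Dv_n)-\A(D\psi)\bigr):(Dv_n-D\psi)\ge 0.
\end{equation*}
The Galerkin energy identity $\int_0^1\!\!\int \A(Dv_n):Dv_n=\tfrac12(|a_n|_{L^2}^2-|v_n(1)|_{L^2}^2)$, together with $|a_n|_{L^2}\to|a|_{L^2}$ and weak lower semicontinuity $|v(1)|_{L^2}\le\liminf|v_n(1)|_{L^2}$, yields $\limsup_n\int\!\!\int\A(Dv_n):Dv_n\le \tfrac12(|a|_{L^2}^2-|v(1)|_{L^2}^2)$. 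Passing to the limit in the monotonicity inequality and then choosing $\psi=v\pm\epsilon\varphi$, $\epsilon\to 0^+$, forces $\chi=\A(Dv)$ a.e. Hence $v$ solves the Stokes system distributionally, and the pressure $\tilde\pi_a$ is recovered as a distribution via the de Rham theorem applied to $\partial_t v-\divv\A(Dv)$.

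The energy inequality follows from weak lower semicontinuity (once $\chi=\A(Dv)$ is identified). For strong $L^2$-attainment at $t=0^+$: the energy inequality provides $\limsup_{t\to 0^+}|v(t)|_{L^2}\le|a|_{L^2}$, while weak continuity yields $|a|_{L^2}\le\liminf_{t\to 0^+}|v(t)|_{L^2}$, and norm-plus-weak convergence implies strong $L^2$ convergence. The main obstacle is the degenerate regime $\nu_0=0$, $q<2$, in which $\A$ is only strictly (not uniformly) monotone; this is handled precisely by the sharp limsup identity above, which is the cleanest form of Minty's trick in this setting.
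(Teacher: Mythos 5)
Your proposal is correct and coincides with the paper's (one-sentence) indication of proof: the paper states only that the argument uses monotonicity of $\A$ (i.e.\ exactly your Galerkin/Minty scheme, which is the standard monotone-operator existence proof since there is no convective term) and the energy inequality combined with weak continuity for the strong $L^2$-attainment at $t=0^+$. Your write-up fills in the standard details correctly, including the key limsup energy identity needed because $\A$ is only strictly monotone in the degenerate regime.
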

The proof uses monotonicity of $\A$ and for strong attainment of the initial datum, the energy inequality.

The main ingredient of proof of Theorem \ref{thm:exist} is a version of Proposition \ref{prop:main} tailored to deal with the Cauchy problem.

Roughly speaking, given a solution to the Non-Newtonian-Reynolds system \eqref{eq:cpR}, which assume the given initial datum,  we construct  another solution to \eqref{eq:cpR} \emph{with the same initial datum}, and with a well-controlled Reynolds stress. The price we pay to keep the initial datum intact is growth of energy. More precisely, energy of the ultimately produced solution to the Cauchy problem with datum $a$ for \eqref{eq:pnse} is  \emph{much above} an energy of a non-Newtonian Stokes emanating from the same $a$. Hence we cannot reach energy inequality, even for merely the kinetic energy in the range $q< \frac{2d}{d+2}$. This is why we do not distinguish the subcompact range $q< \frac{2d}{d+2}$ in Theorem  \ref{thm:exist}. 

We are ready to state

\begin{prop}
\label{prop:main3}
Let $\nu_0, \nu_1 \geq 0$ and $q< \frac{3d+2}{d+2}$, $r \in (\max\{1, q-1\}, \frac{2d}{d+2})$ be fixed. Fix an arbitrary nonzero initial datum $a \in L^2(\Td)$, $\divv a = 0$. 
There exist a constant $M$ such the following holds. 

Let $(u_0, \pi_0, R_0)$ be a solution to the Non-Newtonian-Reynolds Cauchy problem with datum $a$. Let us choose any $\delta, \eta, \sigma \in (0,1]$ and $\gamma >0$.
Assume that
\begin{equation}\label{eq:Rdelta3}
|\cR_0 (t)|_{L^1} \le \frac{\delta}{2^7 d} \quad \text{for all } t \in [2\sigma, 1].
\end{equation}
Then, there is a solution $(u_1, \pi_1, R_1)$ to Non-Newtonian-Reynolds  Cauchy problem with same datum $a$ such that
\begin{subequations}\label{eq:mp_all3}
\begin{equation}\label{eq:mp_up3}
|(u_1 - u_0)(t)|_{L^2} \leq
\begin{cases}
M \delta^\frac12 + M \gamma^\frac12 &  t \in [4\sigma,1], \\
M (\delta + \sup_{\tau \in (t-\sigma/4, t+\sigma/4)}   |\cR_0(\tau)|_1 + \gamma)^\frac12  & t \in \left[\sigma/2, 4 \sigma\right], \\
0 & t \in [0, \sigma/2],
\end{cases}
\end{equation} 
\begin{equation}\label{eq:mp_uwp3}
|(u_1 - u_0)(t) |_{W^{1,r}} \le \begin{cases}
\eta & \text{for all } t \in [0, 1], \\
0 & t \in [0, \frac{\sigma}{2}].
\end{cases}
\end{equation}
\begin{equation}\label{eq:mp_R3}
|R_1 (t)|_{L^1} \le 
\begin{cases}
\eta, & t \in [\sigma, 1], \\
|R_0(t)|_{L^1} + \eta, & t \in [\sigma/2, \sigma], \\
|R_0(t)|_{L^1}, & t \in [0, \sigma/2].
\end{cases}
\end{equation}
\end{subequations}
and 
\begin{equation}\label{eq:t_ceL}
\left| |u_1|_2^2 -  |u_0|_2^2 -  d \gamma \right| (t) \le \frac{\delta}{2^4} \qquad t \in [4\sigma, 1],
\end{equation}
\end{prop}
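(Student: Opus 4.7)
The strategy parallels the construction of Section \ref{sec:step} and its estimates in Section \ref{sec:est}, with two modifications tailored to the Cauchy problem: a smooth temporal cutoff that kills the perturbation near $t=0$ (hence preserving the datum $a$), and a constant energy injection $\gamma$ in place of the profile--chasing parameter $\gamma_0(t)$ of \eqref{eq:gnot}. The analytic range is handled exactly as in the sketch of Theorem \ref{thm:two} (Section \ref{sec:pfT2}): we replace the exponent $q$ by $r \in (\max\{1,q-1\},\tfrac{2d}{d+2})$ in the choices of Sections \ref{ssec:mag}--\ref{ssec:nr}, which is subcritical and thus makes \eqref{eq:uei} available.

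\textbf{Construction.} Fix a smooth cutoff $\chi_\sigma \in C^\infty([0,1];[0,1])$ with $\chi_\sigma\equiv 0$ on $[0,\sigma/2]$, $\chi_\sigma\equiv 1$ on $[\sigma,1]$, and $|\chi_\sigma'|_\infty \lesssim \sigma^{-1}$. Regularise $\cR_0$ as in \eqref{eq:regularization-r0} with $\epsilon \le \sigma/4$, so that $\cR_0^\epsilon(t)$ depends only on values of $\cR_0(\tau)$ for $\tau$ in $(t-\sigma/4, t+\sigma/4)$. Redefine
\[
\varrho(x,t) := 2\sqrt{\epsilon^2+|\cR_0^\epsilon(x,t)|^2}+\gamma, \qquad a_k(x,t) := \chi_\sigma(t)\, \varrho^{1/2}(x,t)\,\Gamma_k\!\bigl(\Id+\cR_0^\epsilon/\varrho\bigr),
\]
and define $u_p, u_c^I, u_c^{II}$, the pressure and all pieces of $R_1$ by the formulas \eqref{eq:cvp}, \eqref{eq:uc1def}, \eqref{eq:precvc}, \eqref{eq:defRlin}--\eqref{eq:defRA} using these new $a_k$. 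By construction $u_p(t)=u_c(t)=0$ on $[0,\sigma/2]$, so $u_1(0)=u_0(0)=a$ and $R_1(t)=R_0(t)$ on $[0,\sigma/2]$, giving the trivial cases of \eqref{eq:mp_up3}--\eqref{eq:mp_R3}.

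\textbf{Velocity estimates.} On $[\sigma,1]$ the analogue of Proposition \ref{prop:ak} reads $|a_k(t)|_2^2 \lesssim |\cR_0^\epsilon(t)|_1+\gamma$. When $t\in[4\sigma,1]\subset[2\sigma,1]$, hypothesis \eqref{eq:Rdelta3} yields $|a_k(t)|_2 \lesssim (\delta+\gamma)^{1/2}$, and the improved H\"older argument of Proposition \ref{prop:up} produces $|u_p(t)|_2 \le M\delta^{1/2}+M\gamma^{1/2} + C\lambda^{-1/2}$; choosing $\lambda$ large then gives the first case of \eqref{eq:mp_up3}. On $[\sigma/2,4\sigma]$ the same argument, now with the mollification bound $|\cR_0^\epsilon(t)|_1 \le \sup_{\tau\in(t-\sigma/4,t+\sigma/4)}|\cR_0(\tau)|_1$, yields the transition estimate. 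The $W^{1,r}$ bound \eqref{eq:mp_uwp3} follows from the concentration estimates \eqref{eq:upwp}--\eqref{eq:ucwp} with $r$ in place of $q$: these scale by a negative power of $\lambda$ via \eqref{eq:uei} with $r$ replacing $q$ (subcritical), so smallness beats any $\sigma$--dependent constant.

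\textbf{Reynolds and energy.} For $t\in[\sigma,1]$ where $\chi_\sigma\equiv 1$, the estimates of Section \ref{ssec:R} go through, modulo the $\sigma^{-1}$ factors that the temporal derivative of $\chi_\sigma$ produces in $R_{lin}$ via $\partial_t a_k$; these are absorbed into the constants $C=C(\sigma)$ and are again beaten by a negative power of $\lambda$. On the transition window $t\in[\sigma/2,\sigma]$ the decomposition \eqref{eq:Rp3} no longer fully cancels $\cR_0^\epsilon$: instead, writing $a_k^2 = \chi_\sigma^2 \varrho\Gamma_k^2$, the identity \eqref{eq:Rp2m} produces a residual $(1-\chi_\sigma^2)\cR_0^\epsilon$ which is added to $R_{quadr}$ and bounded pointwise in $t$ by $|\cR_0^\epsilon(t)|_1 \le |R_0(t)|_1+\eta/2$ (using \eqref{eq:def-epsilon}); the rest of $R_1$ is made arbitrarily small by $\lambda\to\infty$, yielding \eqref{eq:mp_R3}. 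Finally, the energy computation of Proposition \ref{prop:ei}, stripped of its dissipative term as in Section \ref{sec:pfT2}, gives on $[4\sigma,1]$
\[
\Bigl| \int_\Td |u_p|^2(t)\,dx - d\gamma \Bigr| \le \tfrac{\delta}{2^5} + C\lambda^{-\zeta'},
\]
since $\sum_k a_k^2\,\dashint|W^k|^2 \approx \operatorname{tr}(\varrho\Id+\cR_0^\epsilon) = d\varrho$ and $\int\varrho\approx d\gamma + O(|\cR_0^\epsilon|_1+\epsilon)$; combining this with the cross--term controls $|u_0|_2|u_c|_2+|u_p|_2|u_c|_2+|\dashint 2u_0 u_p|$ (all small in $\lambda$) delivers \eqref{eq:t_ceL}.

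\textbf{Main obstacle.} The delicate point is the intermediate window $[\sigma/2,\sigma]$: here one cannot rely on the smallness of $\cR_0$ at all, yet one must still produce $|R_1(t)|_1\le |R_0(t)|_1+\eta$ with the additive error $\eta$ uniformly in $\sigma$ but with constants that may depend on $\sigma$. This works only because every term in $R_1$ except the above residual $(1-\chi_\sigma^2)\cR_0^\epsilon$ is governed by the uniform negative--power--of--$\lambda$ bounds from Section \ref{sec:proppf}, so the $\sigma$--dependence enters exclusively through $C(\sigma)$ multiplying negative powers of $\lambda$ and is killed by taking $\lambda = \lambda(\sigma,\delta,\eta,\gamma)$ sufficiently large in the final step.
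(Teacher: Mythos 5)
Your proposal is correct and matches the paper's own sketch in all essentials: the same temporal cutoff $\chi$ vanishing on $[0,\sigma/2]$ and equal to $1$ on $[\sigma,1]$, the same replacement of the profile-chasing $\gamma_0(t)$ of \eqref{eq:gnot} by the constant $\gamma$, the same mollification scale $\epsilon\ll\sigma$ justifying the $\sup_{\tau\in(t-\sigma/4,t+\sigma/4)}$ term, the same decomposition $R_0=(1-\chi^2)R_0+\chi^2R_0$ on the transition window, and the same absorption of the $\sigma^{-1}$ factors coming from $\chi'$ into constants that are beaten by negative powers of $\lambda$. The only cosmetic difference is that you place the cutoff inside $a_k$, so the $\chi'$ contributions land directly in $R_{lin}$ and $R_{quadr}$, whereas the paper multiplies $u_p$ and $u_c$ by $\chi$ and $\chi^2$ a posteriori and collects the derivative terms in a separate stress $R_{cutoff}$; the two bookkeepings are equivalent.
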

\begin{proof}[Sketch of the proof of Proposition \ref{prop:main3}]
Let us indicate the changes we need to make in the proof of Proposition \ref{prop:main}.

The constant $\gamma$ will be used instead of the energy pump $\gamma_0 (t)$ of \eqref{eq:gnot}. This changes \eqref{eq:motiv_rho} and gives 
\[
\varrho(x,t) \leq 2 \epsilon + 2 |\cR_0^\epsilon(x,t)| + \gamma 
\]
and thus alters \eqref{eq:chipaa} to
\begin{equation}\label{eq:chipaa3}
\left| a_k (t)  \right|_2 \le 2 ( \delta +  |\cR_0(t)|_1 + \gamma)^\frac12.
\end{equation}
Define $u_p$, $u_c$ by \eqref{eq:cvp}, \eqref{eq:cvc} respectively (with the new $\varrho$). 
Let us introduce a smooth cutoff 
\[
\chi(t) 
\begin{cases}
=0 &   t \le \sigma/2, \\
\in [0,1] & t \in (\sigma/2, \sigma) \\
=1, & t \ge \sigma.
\end{cases}
\]
and define the perturbations $\tilde u_p$, $\tilde u_c$ as follows
\begin{equation*}
\tilde u_p(t) := \chi(t) u_p(t), \qquad \tilde u_c(t) := \chi^2(t) u_c(t).
\end{equation*}
Due to \eqref{eq:chipaa3}, the new version of \eqref{eq:uplp2} reads
\begin{equation}\label{eq:uplp23}
|\vp (t)|_2 \le  M ( \delta +  |\cR_0(t)|_1 + \gamma)^{1/2}  +  \lambda_1^{-\frac{1}{2}} C.
\end{equation}
Since  \eqref{eq:Rdelta3} holds only on $[2\sigma,1]$, and recalling the fact that $\cR_0^\epsilon$ is the mollification in space \emph{and} time of $\cR_0$, we can follow the lines of Proposition \ref{prop:main} only on $[4\sigma, 1]$. This gives  the first line of \eqref{eq:mp_up3}. Concerning the case $[\sigma/2, 4\sigma]$ of \eqref{eq:mp_up3} let us compute, using 
\eqref{eq:chipaa3}
\[
|\tilde u_p (t)|^2_{L^2} \le |u_p (t)|^2_{L^2}  \le M ( \delta +  |\cR_0(t)|_1 + \gamma)
\]
Our assumption now does not control $\cR^\epsilon_0(t)$ for $t \le 2\sigma$, but we can always write, choosing $\epsilon \ll \sigma$
\[
|\tilde u_p (t)|^2_{L^2} \leq M \Big(\delta + \sup_{\tau \in (t-\sigma/4, t+\sigma/4)}   |\cR_0(\tau)|_1 + \gamma \Big),
\]
which, together with the smallness of $|\tilde u_c (t)|_{L^2}$, cf.\ \eqref{eq:ucwp}, gives the case $[\sigma/2, 4\sigma]$ of \eqref{eq:mp_up3}. On $[0, \sigma/2]$, it holds $u_1 = u_0$, as there $\chi \equiv 0$, thence the respective part  of \eqref{eq:mp_up3}.

The estimate \eqref{eq:mp_uwp3} holds on the whole time interval, because the Mikado flows are small in $W^{1,r}$ for  $r < \frac{2d}{d+2}$ by construction.

The estimate on the new Reynolds stress \eqref{eq:mp_R3} on $[\sigma,1]$ is analogue to the corresponding estimate \eqref{eq:mp_R} of Proposition \ref{prop:main}, as on $[\sigma,1]$ the time cutoff $\chi \equiv 1$. The estimate on $[0, \sigma/2]$ is trivially satisfied, as on this time interval the cutoff $\chi \equiv 0$ (here $u_0 = u_1$, so  $R_0 = R_1$).
On the intermediate time interval $[\sigma/2,\sigma]$, $R_0$ is decomposed as $R_0 = R_0(1 - \chi^2) + R_0 \chi^2$. The term $R_0 \chi^2$ is canceled by $\tilde u_p \otimes \tilde u_p = \chi^2 (u_p \otimes u_p)$, as in the proof of Proposition \ref{prop:main}, thus giving the $\eta$ in the second line of \eqref{eq:mp_R3}, whereas the term $R_0(1 - \chi^2)$ is responsible for $|R_0(t)|_{L^1}$ in  the second line of \eqref{eq:mp_R3}. There is, however, in the new Reynolds stress $R_1$ an additional term coming from the time derivative of the cutoff:
\begin{equation}
\label{eq:Rcutoff}
\divv R_{cutoff}(t) := \chi'(t) u_p(t) + (\chi^2)'(t) u_c(t).
\end{equation}
For the first term in \eqref{eq:Rcutoff} we just use \eqref{eq:uplp<} with $r=1$:
\begin{equation*}
\begin{aligned}
|\divv^{-1} \big( \chi'(t) u_p(t) \big)|_1 
\leq |\chi'(t) u_p(t)|_1 
\leq \frac{C}{\sigma} \mu^{-d/2} 
\leq \frac{C}{\sigma} \lambda^{-a d/2},
\end{aligned}
\end{equation*}
where we used the choice $\mu= \lambda^a$, as in Section \ref{sec:proppf}. For the second term in \eqref{eq:Rcutoff} we have, invoking  \eqref{eq:inv_stnd}
\begin{equation*}
|\divv^{-1} \big( (\chi^2)'(t) u_c(t) \big)|_1 \leq |\divv^{-1} \big( (\chi^2)'(t) u_c(t) \big)|_2 \leq |(\chi^2)'(t) u_c(t)|_2 \leq \frac{C}{\sigma} L(2)
\end{equation*}
Since by \eqref{eq:l-tildel-neg} $L(2) \leq C \lambda^{-\zeta}$, both terms of  \eqref{eq:Rcutoff} are estimated by negative powers of $\lambda$. Thus they can be made as small as we wish by picking $\lambda$ big enough. 

Let us now justify \eqref{eq:t_ceL} along the proof of Proposition \ref{prop:ei}. In \eqref{eq:el1ip} $\gamma_0$ changes to $\gamma$. Now we do not control $|\cR^\epsilon_0(t)|_1$ on the entire time interval $[0,1]$, only on $[4 \sigma,1]$ via \eqref{eq:Rdelta3}. On this interval $\chi =1$ and hence for any $t \in [4 \sigma,1]$
one has the following counterpart of \eqref{eq:el3}
\begin{equation}\label{eq:el33}
\left| |\tilde u_p|_2^2  - d \gamma \right| \le  \frac{\delta}{2^5} + C \Big( \frac{1}{\lambda_1}   + \frac{\lambda_1 \mu}{\lambda_2} \Big).
\end{equation}
We use now $u_1 = u_0 + \tilde u_p + \tilde u_c$ to write for any $t \in [4 \sigma,1]$
\begin{equation}\label{eq:kec}
\left| |u_1|_2^2 -  |u_0|_2^2  - d \gamma \right| \le  \frac{\delta}{2^5} + C \Big( \frac{1}{\lambda_1}   + \frac{\lambda_1 \mu}{\lambda_2} \Big) +\Big| |\tilde  u_c|_2^2 + 2 \int (u_0 \tilde  u_c + u_0 \tilde  u_p + \tilde u_p \tilde  u_c) \Big|.
\end{equation}
The r.h.s. above can be made arbitrarily small in view of \eqref{eq:el33} and arguments analogous to that leading to Proposition \ref{prop:main},
which yields \eqref{eq:t_ceL}. 
\end{proof}

Iterating Proposition \ref{prop:main3}, we can now complete the proof of Theorem \ref{thm:exist}. Let us choose $\sigma_n := 2^{-n}$ along the iteration. We will choose $\delta_n = {2^{-n}}$ and $\eta_n = {2^{-(n+9)}}d^{-1}$ 
as in proofs of Theorems  \ref{thm:main}, \ref{thm:two}. There are two main differences between the current iterations and the iterations leading to Theorems  \ref{thm:main}, \ref{thm:two}. Firstly, we initiate the iterations with the triple $(v_a, \tilde \pi_a, - v_a \otimes v_a)$, where $v_a, \tilde \pi_a$ are given by Proposition \ref{prop:nns}. Secondly, we will choose the now additional free parameter $\gamma$ just to distinguish between different solutions. Namely, let us choose $\gamma_n = d^{-1} \delta_n$ \emph{except for} $\gamma_3$, which we require it to be a large constant, say $K$.
%
%
%

The condition \eqref{eq:Rdelta3} for the initial triple is void (empty interval where it shall hold) and over iterations it is satisfied thanks to the first case of \eqref{eq:mp_R3} and our choices for $\eta_n, \delta_n, \sigma_n$. The third iteration produces $u_3$ out of $u_2$ such that \[
\left| |u_3|_2^2 -  |u_2|_2^2 -  d K \right| (t) \le {2^{-7}} \qquad t \in [1/2, 1].
\]
At this step the energies of the iteratively produced solutions branch: choosing two $K$'s that considerably differ, we will see that the kinetic energies on $t \in [1/2, 1]$ of the finally produced solutions differ considerably. 

From step $n=4$ onwards $\gamma_n = \delta_n$, thus the first line of \eqref{eq:mp_up3} is analogous to \eqref{eq:mp_up}. 
Iterating Proposition \ref{prop:main3} we thus obtain
convergence of the sequence $\{u_n-v_a\}_n$ to some \[v_\infty \in C((0,1]; L^2(\Td)) \cap C([0,1];  W^{1,r}(\Td)).\] Note the open side of an  interval above. Taking into account the regularity class of $v_a$ and $r<q$, we thus have
\[u_n \to v_a + v_\infty :=v \qquad\text{ strongly  in } \quad L^\infty((0,1); L^2(\Td)) \cap L^r((0,1);  W^{1,r}(\Td)),\]
which allows to pass to the limit in the distributional formulation of \eqref{eq:pse}, since by choice $r>\max{(1, q-1)}$.

Concerning the attainment of the initial datum $a$, for any $q_0 <2$ the estimate \eqref{eq:mp_uwp3} yields $|v_\infty|_{q_0} (t) \to 0$ as $t \to 0$. Therefore $v = v_\infty + v_a$ satisfies $|v - a|_{q_0} (t) \to 0$ as $t \to 0$.
From this and the fact that the $L^2$ norm of $v(t)$ is uniformly bounded in time on $[0,1]$, it follows that $v(t) \rightharpoonup a$ weakly in $L^2$ as $t \to 0^+$. 

Let us finally argue for multiplicity of solutions. At the step $n=3$ let us choose two different $K$, $K'$. Let us distinguish the resulting $u_n$'s and their limits $v$ by, respectively, $u_n$ and $u'_n$, and $v$ and $v'$. 
 On $t \in [1/2, 1]$ \eqref{eq:t_ceL} yields for $n \ge 4$
\[
\left| |u_{n}|_2^2 -  |u_{n-1}|_2^2 \right| (t) \le 2^{-(4+n)} + 2^{-n} \qquad \left| |u'_n|_2^2 -  |u'_{n-1}|_2^2 \right| (t) \le 2^{-(4+n)} + 2^{-n},
\]
 whereas
 \[
\left| |u_3|_2^2 -  |u_2|_2^2 -  d K \right| (t) \le {2^{-7}} \qquad\left| |u'_3|_2^2 -  |u'_2|_2^2 -  d K' \right| (t) \le {2^{-7}}
\]
So, since $u_2 = u'_2$
 \[
\left| |u_{n}|_2^2 -  |u_2|_2^2  -  d K \right| (t) \le 1/2 \qquad \left| |u'_n|_2^2  -  |u_2|_2^2  -  d K'  \right| (t) \le 1/2.
\]
 The same inequalities hold for the strong limits $v$, $v'$. Therefore, for $d|K-K'|>1$ they must differ.



\section{Appendix}\label{sec:app}
\subsection{Proof of Lemma \ref{lem:gA}}
Let us first consider a scalar function $f: \RR \to \RR$
\begin{equation*}
f(t) := (\nu_0 + \nu_1 |t|)^{q-2} t.
\end{equation*}
It is Lipschitz for $\nu_1 =0$. In the range $q \in (1,2]$ $f$ is $(q-1)$-H\"older continuous for  $\nu_0 = 0$, $\nu_1 >0$; and locally Lipschitz for $\nu_0 > 0$, $\nu_1 >0$. By the last statement we mean that 
\begin{equation}\label{eq:afa}
|f(t) - f(s)| \le C_q |t-s|  (\nu_0+\nu_1 (|t|+ |s|))^{q-2}.
\end{equation}
It is proven by writing 
\begin{equation}\label{eq:afb}
\begin{aligned}
|f(t) - f(s) | &\le  |q-1| |t-s| \int_0^1 (\nu_0+\nu_1 |s+ x (t-s)|)^{q-2}  dx \\
& \le |q-1| |t-s| (\nu_0+\nu_1 (|t|+ |s|))^{q-1-\delta} \int_0^1 (\nu_0+\nu_1 |s+ x (t-s)|)^{\delta -1}  dx,
\end{aligned}
\end{equation}
with the second inequality given by splitting $q-2 = (q-1-\delta ) + (\delta -1)$ so that $q-1-\delta>0$ and thus the function $t \to t^{q-1-\delta}$ is increasing. For the integral in the second line of  \eqref{eq:afb} we use that for $\gamma \in (-1, 0]$ and $|t| + |s|>0$ it holds
\begin{equation}\label{eq:af}
 \int_0^1 (\nu_0+\nu_1 |s+ x (t-s)|)^{\delta -1}  dx \le C_\gamma (\nu_0+\nu_1 (|t|+ |s|))^{\gamma}.
\end{equation}
Computation for \eqref{eq:af} is contained in proof of Lemma 2.1 in \cite{AceFus89}. Using  \eqref{eq:af} in  \eqref{eq:afb} we obtain  \eqref{eq:afa}.

In the range $q \ge2$ for $f$, the function $t \to t^{q-2}$ is increasing and integrable at $0$, so \eqref{eq:afa} holds for any $\nu_0 \ge 0$, $\nu_1 \ge 0$. Altogether
\begin{equation}\label{eq:fall}
|f(t) - f(s) | \le \left\{ 
\begin{aligned}
 & C_{\nu_1} |t-s|^{q-1} &\text{ for } \nu_0=0, q<2 \\
 & C_{\nu_0} |t-s| &\text{ for } \nu_0>0, q<2 \\
 & C_{q, \nu_0, \nu_1}  |t-s| \left(1 + |t|^{q-2} + |s|^{q-2} \right) &\text{ for } q\ge2
\end{aligned}
\right.
\end{equation}

Replacing scalars with tensors in \eqref{eq:fall} will give \eqref{eq:a-est-1}. Details follow.

Let us consider first the (global H\"older) case $\nu_0=0, q<2$, i.e.\ the first line of \eqref{eq:fall}. We show the respective first line of \eqref{eq:a-est-1} by considering two cases: (i) $Q, P$ lie along a line passing through the origin and (ii) $Q, P$ lie on a sphere centered at the origin. The case (i) is $Q = t Q_0$, $P = s Q_0$ for some $s,t \in \RR$ and $|Q_0| = 1$, thus \eqref{eq:a-est-1} here follows immediately from \eqref{eq:fall}. The case (ii) is
$|Q| = |P| = l$ for some $l > 0$. Here
\begin{equation*}
\A(Q) =Q  (\nu_1 l)^{q-2}, \quad \A(P) = P  (\nu_1 l)^{q-2},\end{equation*}
so
\begin{equation*}
\frac{|\A(Q)  - \A(P)|}{|Q-P|^{q-1}} = \frac{|Q-P|}{(\nu_1 l)^{2-q}	|Q-P|^{q-1}} =  \Big(\frac{|Q-P|}{\nu_1 l} \Big)^{2-q} \leq \Big( \frac{2l}{\nu_1 l} \Big)^{2-q}
\end{equation*}
and the latter is a  $l$-independent constant.
Both cases (i), (ii) yield for every nonzero $Q,P$
\begin{equation*}
\begin{split}
|\A(Q) - \A(P)|
& \leq \underbrace{\Big| \A(Q) - \A \Big( \frac{|Q|}{|P|} P \Big) \Big|}_{\text{arguments lie: on the sphere $\partial B_{|Q|} (0)$}} + \underbrace{\Big| \A \Big( \frac{|Q|}{|P|} P \Big) - \A(P) \Big|}_{\text{on the same line through origin}} \\
& \leq C_{\nu_1} \Big( \Big| Q - \frac{|Q|}{|P|} P \Big|^{q-1} + \Big| \frac{|Q|}{|P|} P - P \Big|^{q-1} \Big)  \leq C_{\nu_1}  |P-Q|^{q-1}.
\end{split}
\end{equation*}
We have just proven the first $\nu_0=0, q<2$ case of \eqref{eq:a-est-1}.

The second (global Lipschitz) case $\nu_0>0, q<2$, i.e.\ the tensorial version of the second line of \eqref{eq:fall}, can be proven analogously. But in fact the computation leading to \eqref{eq:afa} works well when applied immediately to tensor mappings both in the case $\nu_0>0, q<2$ and $q\ge2$, giving the remainder of \eqref{eq:a-est-1}.
%
Estimate \eqref{eq:a-est-3} follows from an argument that gave the case $q \ge 2$ in \eqref{eq:a-est-1}, applied to $\tilde f(t) = (\nu_0 + \nu_1 |t|)^{q-2} t^2$.

\subsection{Proof of Proposition \ref{prop:invdiv} on antidivergence operators}
\subsubsection*{(i. $\divn: C_0^\infty(\T^d; \RR^d) \to C_0^\infty(\T^d; \cS)$)} Let $\Delta^{-1} v$ denote the null-mean solution $u$ to $\Delta u = v$ on $\Td$. Recall that $D f := \frac{\nabla f + \nabla^T f}{2}$ (symmetric gradient), and recall $P_H f := f - \nabla  \Delta^{-1} \divv f$ (Helmholtz projector). Take \[\divn v := D  \Delta^{-1} v + D P_H \Delta^{-1} v,\] which is symmetric, because $D$ is symmetric. (Compare $\divn$ with the inverse divergence   $\frac32 D  \Delta^{-1} v + \frac12 D P_H \Delta^{-1} v - \frac12 \divv  \Delta^{-1} v \Id$ of Definition 4.2 \cite{DLS13}, which is automatically traceless. We use the simpler choice for $\divn$, since trace zero is provided by a pressure shift.) Since $2 \divv D = \Delta + \nabla \divv$, we have for $\Delta u = v$
\[
\divv (D u + D P_H u) = \frac12 \left(\Delta u + \nabla \divv u + \Delta  (u - \nabla  \Delta^{-1} \divv u) +  \nabla \divv  (u - \nabla  \Delta^{-1} \divv u) \right) = \Delta u = v.
\]
Estimates \eqref{eq:inv_stnd}, \eqref{eq:inv_stnd2} follow from arguments analogous to that of \cite{ModSze18}, proof of Lemma 2.2, so we only sketch them. The estimate
\eqref{eq:inv_stnd} for $p>1$ follows from Calder\'on-Zygmund theory, suboptimally, because $\divn$ is ${-1}$-homogenous. This suboptimality yields the borderline cases. In particular $p=\infty$ holds, since $\nabla \divn$ is $0$-homogenous thus, via Sobolev embedding and Calder\'on-Zygmund for  $\nabla \divn$ one has $|\divn f|_\infty \le C |\nabla \divn f|_{d+1} \le  C |f|_{d+1} \le C |f|_\infty$. The other borderline case $p=1$ follows from the fact that the operator dual to $\divn$ is ${-1}$-homogenous and from duality argument.
The claim \eqref{eq:inv_stnd2} uses  \eqref{eq:inv_stnd}, $-1$-homogeneity of $\divn$ that yields
\begin{equation}\label{eq:osex}
\divn  (u_\lambda) = \lambda^{-1} (\divn u)_\lambda,
\end{equation}
and that we are on a torus, so the $L^p$ norms remain unchanged under oscillations. 

\subsubsection*{(ii. $\idivn_N\!: C^\infty(\T^d; \RR) \times C^\infty_0(\T^d; \RR^d)   \to C^\infty_0(\T^d; \cS)$)} (Compare \cite{DLS13}, Proposition 5.2 and Corollary 5.3.) We construct the two-argument improved symmetric antidivergence iteratively upon $\divn$.
Let us commence with
\begin{equation}\label{eq:R0}
\mathcal{R}_0(f,u) := \divn \Big(f u - \dashint fu  \Big).
\end{equation}
Our aim is an antidivergence that extracts oscillations of $u_\lambda$ out of the product $fu_\lambda$. Therefore \eqref{eq:osex} suggests to apply $\divn$ to $u_\lambda$, and correct the remainder. Let us thence compute
\begin{equation}\label{eq:idivTtov}
\divv \left( f \divn u \right) = f u + \sum_{k=1}^d \partial_k f ( \divv^{-1} u) e_k 
\end{equation}
and define $\mathcal{R}_1 (f, u) := f \divn u -  \sum_{k=1}^d  \idivn_{0} \left(\partial_k f, (\divn u) e_k \right).$
The choice \eqref{eq:R0} of $\mathcal{R}_0$ and \eqref{eq:idivTtov} yield
\begin{equation}\label{eq:idivTtov2}
\divv \mathcal{R}_1 (f, u) = f u - \dashint fu.
\end{equation}
Define inductively
$
\mathcal{R}_N(f, u) := f  \divn u - \sum_{k=1}^d \mathcal R_{N-1} \left(\partial_k f, (\divn u) e_k \right),
$ per analogiam with the construction $\mathcal{R}_0 \to \mathcal{R}_1$.
Using induction over $N$, one proves now that $\idivn_N$ is bilinear, symmetric, $\divv \mathcal{R}_N (f, u) = f u - \dashint_\Td fu$, satisfies Leibniz rule:
\begin{equation*}
\partial_k \mathcal{R}_N (f,u) = \mathcal{R}_N(f, \partial_k u) + \mathcal{R}_N( \partial_k f, u)
\end{equation*}
and estimates \eqref{eq:inv_imprA}, similarly to proof of Lemma 3.5 of \cite{ModSat20}. For instance, to show $\divv \mathcal{R}_N (f, u) = fu - \dashint_\Td fu$ we have \eqref{eq:idivTtov2} as the initial step. Assuming  $\divv \mathcal{R}_{N-1} (f, u) = fu - \dashint_\Td fu$, one has
\[
\divv \mathcal{R}_N (f, u) =  \divv( f \divn u) - \sum_{k=1}^d \Big( \partial_k f (\divn u) e_k - \dashint \partial_k f \left( (\divn u) e_k \right)  \Big)
= f u  - \dashint fu
\]
with the second equality due to \eqref{eq:idivTtov}. 

The estimate \eqref{eq:inv_imprA} holds for $N=1$
since \eqref{eq:inv_stnd} and Jensen imply $| \mathcal{R}_0 (f, u)  |_p \le C |f|_r |u|_s$ and thus
\[
\begin{aligned}
| \mathcal{R}_1 (f, u_\lambda) |_{p} &\le C |f|_r |\divn u_\lambda|_s +  \sum_{k=1}^d  |\idivn_{0} \left(\partial_k f, (\divn u_\lambda) e_k \right)|_p \\
& \le  C|f|_r |\divn  u_\lambda|_s +  C |\nabla f|_r | (\divn u_\lambda) |_s \le C  |u|_s \Big(\frac{1}{\lambda} |f|_r  + \frac{1}{\lambda} |\nabla f|_r \Big) ,
\end{aligned}
\]
with the last inequality due to  \eqref{eq:inv_stnd2}. For the inductive step $N-1 \to N$, let us compute, using \eqref{eq:osex}
\begin{equation*}
\begin{aligned}
&|\mathcal{R}_N(f, u_\lambda )|_{p}  \leq 
 C |f|_r |\divn u_\lambda|_s +\frac{1}{\lambda}\sum_{k=1}^d |  \mathcal{R}_{N-1} \left( \partial_k f, ((\divn u) (\lambda \cdot)) e_k \right) |_p \\
& \leq C |u|_s \frac{1}{\lambda} |f|_r  + C_{d,p,s,r,N-1} \sum_{k=1}^d |\divn u|_s  \frac{1}{\lambda} \Big( \frac{1}{\lambda} |\partial_k f|_r + \frac{1}{\lambda^{N-1}} |\nabla^{N-1} \partial_k f|_r \Big),
\end{aligned}
\end{equation*}
with the second inequality valid via the inductive assumption, i.e.\ \eqref{eq:inv_imprA} for $N-1$.
The estimate \eqref{eq:inv_stnd} and interpolation yield the inductive thesis.

\subsubsection*{(iii. $\tilde \idivn_N: C^\infty(\T^d; \RR^d) \times C^\infty_0(\T^d; \RR^{d \times d})   \to C^\infty_0(\T^d; \cS)$)} Take 
\[
\tilde \idivn_N ( v, T) := \sum_{k=1}^d \idivn_N  (v_k, T e_k ).
\]
Then $\divv \tilde \idivn_N (v, T) = \sum_{k=1}^d v_k T e_k - \dashint_\Td v_k T e_k = Tv -  \dashint_\Td Tv$ and since it is a linear combination of $\idivn_N$'s, it retains all its properties.

\subsubsection*{(iv. $ \idivn^2_N: C^\infty(\T^d; \RR) \times C^\infty_0(\T^d; \RR)   \to C^\infty_0(\T^d; \cS)$)} We redo the reasoning of (i) - (iii), starting from the following `standard double antidivergence' $ \divnd: C_0^\infty(\T^d; \RR) \to C_0^\infty(\T^d; \cS)$
\[
 \divnd v := \nabla^2 \Delta^{-2} v,
\]
where $\nabla^2$ is the (symmetric) tensor of second derivatives, and thus $\divv \divv \divv^{-2} f = f.$ Analogously as for $\divn$ we have
$| \divnd v|_{W^{1,p}} \leq |v|_p  C_p.$
Since it is $-2$-homogenous, it holds 
$\divnd (v_\lambda) = \lambda^{-2} (\divnd v)_\lambda,$
thus
\begin{equation}\label{eq:inv_stnd2d}
| \nabla^i \divnd (v_\lambda)|_{L^{p}} \le \lambda^{i-2} |\nabla^i v|_{L^{p}}  C_{k,p}.
\end{equation} Upon $ \divnd$ we build now $ \idivn^2_N$, starting with
\begin{equation}\label{eq:idivTts0}
\mathcal{R}^2_0(a,b) := \divnd \Big(ab - \dashint ab \Big)
\end{equation}
and the computation 
\begin{equation}\label{eq:idivTtovd}
\divv \divv \left( a  \,\divnd b\right) =  a b + \sum_{k,l=1}^d 2  \partial_{l} (\divnd b)^{kl}  \partial_k a +(\divnd b)^{kl} \partial^2_{kl} a,
\end{equation}
where we used symmetry of $\divnd$.
Hence let us define by recursion
\[
\mathcal{R}^2_N (a, b) := a  \,\divnd b -  \sum_{k,l=1}^d \big[ 2 \mathcal{R}^2_{N-1} \left(\partial_k a,  \partial_{l} (\divnd b)^{kl}  \right) +  \mathcal{R}^2_{N-1} \left(\partial^2_{kl} a, (\divnd b)^{kl}  \right) \big].
\]
Let us inductively prove that 
\begin{equation}\label{eq:idivTtsn}
\divv \divv  \mathcal{R}^2_N (a, b) = ab - \dashint ab.
\end{equation}
The initial statement for $N=0$ is \eqref{eq:idivTts0}. Assuming \eqref{eq:idivTtsn} for $N-1$, we use \eqref{eq:idivTtovd} to compute
\[
\divv \divv \mathcal{R}^2_N (a, b) 
= a b  +  \dashint  \sum_{k,l=1}^d \left( 2\partial_k a \, \partial_l (\divnd b)^{kl} + \partial^2_{kl} a (\divnd b)^{kl} \right).
\]
We see again via \eqref{eq:idivTtovd} that the mean value above equals $- \dashint ab$.

Via induction over $N$, one proves that $\idivn^2_N$ is bilinear, symmetric, and satisfies Leibniz rule. Concerning the estimate \eqref{eq:inv_impr2div}, let us first prove it for $j=0$. The proof is by induction on $N$. For  $N=1$, the estimate is true,  
since $| \mathcal{R}^2_0 (a, b)  |_p \le C |a|_r |b|_s$ and thus 
\[
\begin{aligned}
| \mathcal{R}^2_1 (a, b_\lambda) |_{p} 
& \le  C|a|_r |\divv^{-2}  b_\lambda|_s +  C |\nabla a|_r | (\nabla \divv^{-2} b_\lambda) |_s + C |\nabla^2 a|_r | \divv^{-2} b_\lambda |_s \\
&\le C |b|_s \Big(    \frac{1}{\lambda^2} |a|_r   + \frac{1}{\lambda} |\nabla a|_r +  \frac{1}{\lambda^2} |\nabla^2 a|_r  \Big)
\end{aligned}
\]
with the last inequality due to  \eqref{eq:inv_stnd2d}. For the inductive step $N-1 \to N$, let us compute using $-2$-homogeneity of $\divnd$, $-1$-homogeneity of its derivatives, bililnearity of $\mathcal{R}^2$
\begin{equation*}
|\mathcal{R}^2_N(a, b_\lambda )|_{p}  
  \leq C |b|_s  \frac{1}{\lambda^2} |a|_r +\sum_{k,l=1}^d 2 \frac{1}{\lambda}|  \mathcal{R}^2_{N-1} \left( \partial_k a,  \partial_l (\divnd b)^{kl}_\lambda  \right) |_p + \frac{1}{\lambda^2} |  \mathcal{R}^2_{N-1} \left( \partial^2_{kl} a, (\divnd b )^{kl}_\lambda  \right)|_p,
\end{equation*}
using to the r.h.s.\ above the $j=0$ inductive assumption \eqref{eq:inv_impr2div} for $N-1$ and interpolation yields  \eqref{eq:inv_impr2div} for $j=0$. Finally, estimate \eqref{eq:inv_impr2div} for any $j \in \N$ is proven  by induction on $j$. We already know that \eqref{eq:inv_impr2div} holds for $j=0$. Assuming it is valid for $j$, one has by Leibniz rule and linearity
\[
\nabla^{j+1} \idivn^2_N (a, b_\lambda)  = \nabla^{j} \idivn^2_N (\nabla a, b_\lambda) +\lambda \nabla^{j} \idivn^2_N (a, (\nabla b)_\lambda),
 \]
which via the inductive assumption yields \eqref{eq:inv_impr2div} for $j+1$.

\subsection{Proof of Lemma \ref{l:disj-supp-dmin1}}
The proof is by induction on $|K|$. For $|K| = 1$ the proof is trivial. Let us now assume $|K| \geq 2$. Let us write $K = K' \cup \{k\}$, where $|K'| = |K| -1$. By inductive assumption, $\{\zeta_{k'}\}_{k' \in K'}$ and $\rho'>0$ are already defined, so that the periodization of the cylinders with radius $\rho'$ and axis $\{\zeta_{k'}+sk'\}_{s \in \RR}$, for $k' \in K'$, are pairwise disjoints. It is then enough to find $\zeta_k \in \Rd$ and $\rho \in (0, \rho')$ such that \eqref{eq:disj-supp-d1} holds for all $k' \in K'$. Notice that \eqref{eq:disj-supp-d1} is equivalent to 
\begin{equation*}
B_\rho(\zeta_k) \cap \Big( B_\rho(0) + \big\{ \zeta_{k'} + sk + sk' \}_{s,s' \in \RR } + \Zd \Big) =  \emptyset.  
\end{equation*}
Since $d \ge 3$, the  countable union of planes $\{ \zeta_{k'} + sk + s'k'\}_{s,s' \in \RR} + \Zd$  has zero measure and, since $k,k' \in \Zd$, it is closed in $\Rd$. Therefore, for any $\rho \in (0, \rho')$, also
\begin{equation*}
\bigcup_{k' \in K'} \overline{B_\rho(0)} + \big\{ \zeta_{k'} + sk + s'k'\}_{s,s' \in \RR } + \Zd 
\end{equation*}
is closed in $\Rd$ and, if $\rho$ small enough, it is strictly contained in $\Rd$. We can thus find $\zeta_k$ and $\rho \in (0, \rho')$ such that
\begin{equation*}
B_\rho(\zeta_k) \subseteq \Rd \setminus \Big(\bigcup_{k' \in K'} \overline{B_\rho(0)} + \big\{ \zeta_{k'} + sk + s'k' \, : \, s,s' \in \RR \big\} + \Zd \Big),
\end{equation*}
with the superset being open, thus concluding the proof of the lemma.
\bibliographystyle{abbrv}
\bibliography{biblio_nNnU}

\begin{thebibliography}{10}

\bibitem{AceFus89}
E.~Acerbi and N.~Fusco.
\newblock Regularity for minimizers of non-quadratic functionals: The case $1
  <p < 2$.
\newblock {\em J. Math. Anal. Appl.}, 140(1):115--135, 1989.

\bibitem{BeeBucVic20}
R.~Beekie, T.~Buckmaster, and V.~Vicol.
\newblock Weak solutions of ideal mhd which do not conserve magnetic helicity.
\newblock {\em Ann. PDE}, 1(1), 2020.

\bibitem{BirArmHas87}
Bird, Armstrong, and Hassager.
\newblock {\em Dynamics of Polymer Liquids}.
\newblock John Wiley \& Sons, 1987.

\bibitem{BMR19}
J.~Blechta, J.~Malek, and K.~R. Rajagopal.
\newblock On the classification of incompressible fluids and a mathematical
  analysis of the equations that govern their motion.
\newblock {\em SIAM J. Math. Anal.}, 52(2):1232--1289, 2020.

\bibitem{BCDL}
E.~Bru\'e, M.~Colombo, and C.~De~Lellis.
\newblock Positive solutions of transport equations and classical nonuniqueness
  of characteristic curves.
\newblock {\em arxiv}, arXiv:2003.00539.

\bibitem{BCV19}
T.~Buckmaster, M.~Colombo, and V.~Vicol.
\newblock Wild solutions of the navier-stokes equations whose singular sets in
  time have hausdorff dimension strictly less than $1$.
\newblock {\em arXiv:1809.00600}.

\bibitem{BDLSV18}
T.~Buckmaster, C.~De~Lellis, L.~Sz{\'{e}}kelyhidi, and V.~Vicol.
\newblock Onsager's conjecture for admissible weak solutions.
\newblock {\em Comm. Pure Appl. Math.}, 72(2):229--274, 2018.

\bibitem{BucShkVic19}
T.~Buckmaster, S.~Shkoller, and V.~Vicol.
\newblock Nonuniqueness of weak solutions to the sqg equation.
\newblock {\em Comm. Pure Appl. Math.}, 72(9):1809--1874, 2019.

\bibitem{BucVic19ems}
T.~Buckmaster and V.~Vicol.
\newblock Convex integration and phenomenologies in turbulence.
\newblock {\em EMS Surv. Math. Sci.}, 6(1):173--263, 2019.

\bibitem{BV19}
T.~Buckmaster and V.~Vicol.
\newblock Nonuniqueness of weak solutions to the navier-stokes equation.
\newblock {\em Ann. Math.}, 189(1):101--144, 2019.

\bibitem{BKP19}
M.~Bul{\'{\i}}{\v{c}}ek, P.~Kaplick{\'{y}}, and D.~Pra{\v{z}}{\'{a}}k.
\newblock Uniqueness and regularity of flows of non-newtonian fluids with
  critical power-law growth.
\newblock {\em Mathematical Models and Methods in Applied Sciences},
  29(06):1207--1225, jun 2019.

\bibitem{CheDai19}
A.~Cheskidov and M.~Dai.
\newblock Kolmogorov's dissipation number and the number of degrees of freedom
  for the 3d navier-stokes equations.
\newblock {\em Proc. Roy. Soc. Edinburgh Sect. A}, 149(2):429--446, 2019.

\bibitem{CheLuo19}
A.~Cheskidov and X.~Luo.
\newblock Anomalous dissipation, anomalous work, and energy balance for smooth
  solutions of the navier-stokes equations.
\newblock {\em arXiv:1910.04204}.

\bibitem{CheLuo20}
A.~Cheskidov and X.~Luo.
\newblock Nonuniqueness of weak solutions for the transport equation at
  critical space regularity.
\newblock {\em arXiv:2004.09538}.

\bibitem{CheShv14sima}
A.~Cheskidov and R.~Shvydkoy.
\newblock Euler equations and turbulence: analytical approach to intermittency.
\newblock {\em SIAM J. Math. Anal.}, 46(1):353--374, 2014.

\bibitem{ColDLDR18}
M.~Colombo, C.~D. Lellis, and L.~D. Rosa.
\newblock Ill-posedness of leray solutions for the hypodissipative
  navier-stokes equations.
\newblock {\em Comm. Math. Phys.}, 362(2):659--688, 2018.

\bibitem{DanSze17}
S.~Daneri and L.~Sz{\'e}kelyhidi.
\newblock Non-uniqueness and h-principle for h{\"o}lder-continuous weak
  solutions of the euler equations.
\newblock {\em Arch. Ration. Mech. Anal.}, 224(2):471--514, 2017.

\bibitem{DLS13}
C.~De~Lellis and L.~Sz{\'e}kelyhidi.
\newblock Dissipative continuous euler flows.
\newblock {\em Invent. math.}, 193(2):377--407, 2013.

\bibitem{deW23}
A.~de~Waele.
\newblock Viscometry and plastometry.
\newblock {\em Journal of the Oil \& Colour Chemists Association}, 1923.

\bibitem{DRW10}
L.~Diening, M.~Ruzicka, and J.~Wolf.
\newblock Existence of weak solutions for unsteady motions of generalized
  newtonian fluids.
\newblock {\em Ann. Sc. Norm. Super. Pisa Cl. Sci.}, 9(1):1--46, 2010.

\bibitem{FMS00}
J.~Frehse, J.~Malek, and M.~Steinhauer.
\newblock On existence result for fluids with shear de-pendent viscosity –
  unsteady flows.
\newblock {\em “Partial Differential Equations”, W. Jager, J. Necas, O.
  John, K. Najzar and J. Stara (eds.)}, pages 121--129, 2000.

\bibitem{GFAbook}
M.~Gurtin, E.~Fried, and L.~Anand.
\newblock {\em The Mechanics and Thermodynamics of Continua}.
\newblock Cambridge University Press, 2010.

\bibitem{Ise16}
P.~Isett.
\newblock A proof of onsager's conjecture.
\newblock {\em Ann. Math.}, 188(3):871, 2018.

\bibitem{LadICM66}
O.~A. Ladyzhenskaya.
\newblock On some problems from the theory of continuous media (russian).
\newblock {\em ICM Proceedings, Moscow}, 1966.

\bibitem{DLS09}
C.~D. Lellis and L.~Sz{\'{e}}kelyhidi.
\newblock The euler equations as a differential inclusion.
\newblock {\em Ann. Math.}, 170(3):1417--1436, 2009.

\bibitem{TitLuo20}
T.~Luo and E.~S. Titi.
\newblock Non-uniqueness of weak solutions to hyperviscous {N}avier-{S}tokes
  equations - on sharpness of {J.-L. L}ions exponent.
\newblock {\em Calc. Var. and PDE}, 59(92), 2020.

\bibitem{MNRR}
Malek, Necas, Rokyta, and Ruzicka.
\newblock {\em Weak and measure-valued solutions to evolutionary PDEs}.
\newblock 1996.

\bibitem{MNR93}
J.~Malek, J.~Necas, and M.~Ruzicka.
\newblock On the non-newtonian incompressible fluids.
\newblock {\em Math. Models Methods Appl. Sci.}, 3:35--63, 1993.

\bibitem{ModSat20}
S.~Modena and G.~Sattig.
\newblock Convex integration solutions to the transport equation with full
  dimensional concentration.
\newblock {\em Ann. Henri Poincar\'e C}.

\bibitem{ModSze18}
S.~Modena and L.~Sz{\'e}kelyhidi.
\newblock Non-uniqueness for the transport equation with sobolev vector fields.
\newblock {\em Annals of PDE}, 4(2):18, 2018.

\bibitem{modena-szekelyhidi18}
S.~Modena and L.~Sz{\'e}kelyhidi.
\newblock Non-renormalized solutions to the continuity equation.
\newblock {\em Calc. Var. PDE}, 58:208, 2019.

\bibitem{Nor29}
F.~Norton.
\newblock {\em The Creep os Steel at High Temperatures}.
\newblock McGraw-Hill, 1929.

\bibitem{Ost29}
W.~Ostwald.
\newblock Uber die rechnerische darstellung des strukturgebietes der
  viskositat.
\newblock {\em Kolloid-Zeitschrift}, 1929.

\bibitem{DR20}
L.~D. Rosa.
\newblock Infinitely many leray-hopf solutions for the fractional navier-stokes
  equations.
\newblock {\em arXiv:1801.10235}.

\bibitem{OssRud14}
N.~Rudolph and T.~A. Osswald.
\newblock {\em Polymer Rheology}.
\newblock Hanser Fachbuchverlag, 2014.

\bibitem{Sar16book}
P.~Saramito.
\newblock {\em Complex fluids}.
\newblock Springer International Publishing, 2016.

\bibitem{Sch78book}
W.~Schowalter.
\newblock {\em Mechanics of Non-Newtonian Fluids}.
\newblock Pergamon Press, 1978.

\bibitem{Tan00}
Tanner.
\newblock {\em Engineering Rheology}.
\newblock Oxford University Press, 2000.

\end{thebibliography}

\end{document}